\documentclass[11 pt]{amsart}
\usepackage{amsmath, amsthm,amssymb,bbm,enumerate,mathrsfs,mathtools}
\usepackage[foot]{amsaddr}
\usepackage{a4wide}
\usepackage{tikz,xcolor,verbatim}
\usetikzlibrary{calc,shapes}
\usepackage{hyperref}
\usepackage{makecell}
\usepackage[numbers,sort&compress]{natbib}

\usepackage{lineno}
\nolinenumbers


%
%
\tikzset{
  bigblue/.style={circle, draw=blue!80,fill=blue!40,thick, inner sep=1.5pt, minimum size=5mm},
  bigred/.style={circle, draw=red!80,fill=red!40,thick, inner sep=1.5pt, minimum size=5mm},
  bigblack/.style={circle, draw=black!100,fill=black!40,thick, inner sep=1.5pt, minimum size=5mm},
  bluevertex/.style={circle, draw=blue!100,fill=blue!100,thick, inner sep=0pt, minimum size=2mm},
  redvertex/.style={circle, draw=red!100,fill=red!100,thick, inner sep=0pt, minimum size=2mm},
  blackvertex/.style={circle, draw=black!100,fill=black!100,thick, inner sep=0pt, minimum size=2mm},  
  whitevertex/.style={circle, draw=black!100,fill=white!100,thick, inner sep=0pt, minimum size=2mm},  
  grayvertex/.style={circle, draw=black!100,fill=black!50,thick, inner sep=0pt, minimum size=2mm}, 
  smallblack/.style={circle, draw=black!100,fill=black!100,thick, inner sep=0pt, minimum size=1mm},
  smallwhite/.style={circle, draw=black!100,fill=white!100,thick, inner sep=0pt, minimum size=1mm} 
}

\makeatletter
\pgfdeclareshape{myNode}{
  \inheritsavedanchors[from=rectangle] 
  \inheritanchorborder[from=rectangle]
  \inheritanchor[from=rectangle]{center}
  \inheritanchor[from=rectangle]{north}
  \inheritanchor[from=rectangle]{south}
  \inheritanchor[from=rectangle]{west}
  \inheritanchor[from=rectangle]{east}
  \backgroundpath{
    \southwest \pgf@xa=\pgf@x \pgf@ya=\pgf@y
    \northeast \pgf@xb=\pgf@x \pgf@yb=\pgf@y
    \pgfsetcornersarced{\pgfpoint{5pt}{5pt}}
    \pgfpathmoveto{\pgfpoint{\pgf@xa}{\pgf@ya}}
    \pgfpathlineto{\pgfpoint{\pgf@xa}{\pgf@yb}}
    \pgfpathlineto{\pgfpoint{\pgf@xb}{\pgf@yb}}
    \pgfsetcornersarced{\pgfpoint{5pt}{5pt}}
    \pgfpathlineto{\pgfpoint{\pgf@xb}{\pgf@ya}}
    \pgfpathclose
 }
}
\makeatother

\title[Characterizing Circular Colouring Mixing for $\frac{p}{q}<4$ ]{Characterizing Circular Colouring Mixing for $\frac{p}{q}<4$}

\author[Brewster]{Richard C. Brewster}\thanks{Research funded in part by the Natural Sciences and Engineering Council of Canada}
\address[Richard C. Brewster]{Department of Mathematics and Statistics, Thompson Rivers University, Kamloops, BC, Canada} 
\email{rbrewster@tru.ca}

\author[Moore]{Benjamin Moore}
\address[Benjamin Moore]{Institute of Computer Science, Charles University, Prague, Czech Republic}  
\email{brmoore@iuuk.mff.cuni.cz}

\date{}


\newtheorem{thm}[equation]{Theorem}
\newtheorem{lem}[equation]{Lemma}
\newtheorem{prop}[equation]{Proposition}
\newtheorem{conj}[equation]{Conjecture}
\newtheorem{cor}[equation]{Corollary}
\newtheorem{claim}[equation]{Claim}

\theoremstyle{definition}
\newtheorem{defn}[equation]{Definition}
\newtheorem{obs}[equation]{Observation}

\newtheoremstyle{case}{}{}{\normalfont}{}{\itshape}{\normalfont:}{ }{}

\theoremstyle{case}

\numberwithin{equation}{section}

\newcommand{\claimproof}{\renewcommand{\qedsymbol}{$\diamond$}}  

%
%

\newcommand\Hrec[1]{\textsc{\ensuremath{#1}-Recolouring}}
\newcommand\Hmix[1]{\textsc{\ensuremath{#1}-Mixing}}
\newcommand\Hret[1]{\textsc{\ensuremath{#1}-Retraction}}
\newcommand\Hfold[1]{\textsc{\ensuremath{#1}-Folding}}
\newcommand\Hcomp[1]{\textsc{\ensuremath{#1}-Compaction}}

\newcommand{\Col}{\operatorname{\mathbf{Col}}}


\begin{document}

\begin{abstract}
Given a graph $G$, the $k$-mixing problem asks: Starting with a $k$-colouring of $G$, can one obtain all $k$-colourings of $G$ by changing the colour of only one vertex at a time, while at each step maintaining a $k$-colouring? More generally, for a graph $H$, the $H$-mixing problem asks: Can one obtain all homomorphisms $G \to  H$, starting from one homomorphism $f$, by changing the image of only one vertex at a time, while at each step maintaining a homomorphism $G \to H$?

This paper focuses on a generalization of $k$-colourings, namely $(p,q)$-circular colourings. We show that when $2 < \frac{p}{q} < 4$, a graph $G$ is $(p,q)$-mixing if and only if for any $(p,q)$-colouring $f$ of $G$, and any cycle $C$ of $G$, the wind of the cycle under the colouring equals a particular value (which intuitively corresponds to having no wind).
As a consequence we show that $(p,q)$-mixing is closed under a restricted homomorphism called a fold. Using this, we deduce that $(2k+1,k)$-mixing is co-NP-complete for all $k \in \mathbb{N}$, and by similar ideas we show that if the circular chromatic number of a connected graph $G$ is $\frac{2k+1}{k}$, then $G$ folds to $C_{2k+1}$. We use the characterization to settle a conjecture of Brewster and Noel, specifically that the circular mixing number of bipartite graphs is $2$. 
Lastly, we give a polynomial time algorithm for $(p,q)$-mixing in planar graphs when $3 \leq \frac{p}{q} <4$.
\end{abstract}

\maketitle

\textbf{Keywords:} graph theory, colouring, mixing, homomorphism, reconfiguration

\section*{Acknowledgement}
We thank the referees for their helpful suggestions.
The second author is grateful for discussions with Vijay Subramanya and Naomi Nishimura.

\section{Introduction}

Combinatorial reconfiguration problems have received much attention in the recent literature~\cite{Ito, changeSurvey,naomisurvey}.  A particularly well-studied example is graph colouring reconfiguration~\cite{3colReconfig, Bonsma,marthe,itocolouring,dvok2020thomassentype}.  For a fixed positive integer $k$, the \Hrec{k} problem takes as input a graph $G$ together with two $k$-colourings $f$ and $g$ and asks if there is a sequence of $k$-colourings of $G$, $f=f_0, f_1, \dots, f_l = g$ such that successive colourings in the sequence differ on a single vertex. In the affirmative we say $f$ \emph{reconfigures} to $g$. Using a standard reconfiguration framework, we define $\Col(G,K_{k})$ to be the graph whose vertex set is the set of $k$-colourings of $G$ with two $k$-colourings $c_1, c_2$ adjacent if $c_1(v) \neq c_2(v)$ for at most one vertex $v \in V(G)$.  Clearly, $f$ reconfigures to $g$ precisely when there is a path from $f$ to $g$ in $\Col(G,K_{k})$. Cereceda, van den Heuvel, and Johnson~\cite{3colReconfig} show \Hrec{3} is polynomial time solvable.  This is somewhat surprising given the $3$-colouring problem itself is NP-complete. In comparison, Bonsma and Cereceda~\cite{Bonsma} show for all $k \geq 4$, the \Hrec{k} problem is PSPACE-complete. 

In this paper, we study a related reconfiguration problem.  For a fixed $k$, the \Hmix{k} problem takes as input a graph $G$ and asks if $\Col(G,K_{k})$ is connected. If yes, we say $G$ is \emph{$k$-mixing}. When $k \leq 2$, it is easy to see that the \Hmix{k} problem is in P, since $G$ is $2$-mixing if and only if $G$ has no edges. For $k =3$, Cereceda, van den Heuvel and Johnson~\cite{Mixing3Col} show the \Hmix{3} problem is co-NP-complete in general, and in P when the inputs are restricted to planar graphs.  The complexity of \Hmix{k} is open for all $k \geq 4$, and conjectured in~\cite{Mixing3Col} to be PSPACE-complete.

By studying $(p,q)$-circular colourings (a refinement of $k$-colourings defined in Section~\ref{sec:defn}) the dichotomy theorem in~\cite{3colReconfig} is extended in~\cite{circularReconfig} to the following.  The \Hrec{(p,q)} problem is polynomial if $2 \leq \frac{p}{q}< 4$ and is PSPACE-complete for $\frac{p}{q} \geq 4$. This result uses the ideas of~\cite{3colReconfig}, but the more general setting highlights the key tool in the polynomial time algorithm.  In particular, the notion of \emph{cycle wind} invariance (defined in Section~\ref{sec:defn}) follows from the restriction $\frac{p}{q} < 4$, rather than, say, the number of colours available.  In general, the role of topological ideas in homomorphism reconfiguration appear in Wrochna's ground breaking works~\cite{Wrochna, WROCHNA20181}.

In this work, we extend the study of the mixing problem to $(p,q)$-circular colourings. 
In brief, we prove the following characterization. For $2 < \frac{p}{q} < 4$, a graph $G$ is $(p,q)$-mixing if and only if for any $(p,q)$-circular colouring of $G$, and any cycle $C$ of $G$, the wind of the cycle under the colouring equals a particular value (which intuitively corresponds to having no wind).  This results extends the work of~\cite{Mixing3Col} from $k=3$ to the range $2 < \frac{p}{q} < 4$.

As a consequence, we show that $(p,q)$-mixing is closed under a restricted, wind preserving, homomorphism called a \emph{fold}. Using this, we prove that $(2k+1,k)$-mixing is co-NP-complete for all $k \in \mathbb{N}$, extending the $k=3$ result in~\cite{Mixing3Col}.  By similar ideas we show that if the circular chromatic number of a connected graph $G$ is $\frac{2k+1}{k}$, then $G$ folds to $C_{2k+1}$. This complements the result of Cook and Evans~\cite{Chromaticnumberandfolding} stating that a graph with chromatic number $k$ folds to $K_k$.  (Zhu~\cite{Zhusurvey} shows that such folding results for other values of $\frac{p}{q}$ is not possible.) We use the mixing characterization to settle a conjecture of Brewster and Noel in the positive, specifically that the circular mixing number of bipartite graphs is $2$. Lastly, we give a polynomial time algorithm for $(p,q)$-mixing for planar graphs when $3 \leq \frac{p}{q} <4$.

Thus, our work maybe viewed as a refinement of the $3$-mixing results in~\cite{Mixing3Col}.  By studying $(p,q)$-colourings for $2 \leq \frac{p}{q} < 4$ (of which $k = \frac{p}{q} = 3$ is a special case) we see some results extend to the entire range $2  \leq \frac{p}{q} < 4$, the co-NP-completeness results extend to odd cycles (of which $K_3$ is a special case) and finally the planar complexity results extend to $3 \leq \frac{p}{q} < 4$ (again of which $k=3$ is a special case).  In the latter two settings, we provide evidence that extending the folding results beyond odd cycles and the planar results to $2< \frac{p}{q} < 3$ will be challenging.

The following section contains key definitions, statements of our main results, and an outline of the paper.

\section{Definitions and statements of results}\label{sec:defn}

In this work we consider finite, simple graphs.  We refer the reader to~\cite{BondyMurty} for standard graph theoretic terminology and notation.

The key focus of this paper is extending results on $k$-mixing to circular colourings. We first define such colourings. Let $p, q$ be positive integers. A \emph{$(p,q)$-circular colouring} of a graph $G=(V,E)$ is a labelling of the vertices $f:V \to \{ 0, 1, 2, \dots, p-1 \}$ such that if $uv \in E$, then $q \leq |f(u)-f(v)| \leq p-q$.  That is, adjacent vertices receive labels differing by at least $q$ in the circular sense, i.e. $|x-y| = \min \{ |x-y|, |p-x+y| \}$.  For brevity we drop the word circular and use the term \emph{$(p,q)$-colouring} throughout the paper.  When $q=1$ the condition reduces to adjacent vertices receiving different colours, hence $(p,q)$-colourings generalize $k$-colourings.  Thus, one may view our work here and in~\cite{circularReconfig} as extending the works of Cereceda et al. from $k = 3$ to $2 < \frac{p}{q} < 4$.

It is convenient, and common, to define colourings via graph homomorphisms. (See~\cite{Graphsandhomomorphisms} for a detailed treatment of graph homomorphisms.) Given graphs $G$ and $H$, a \emph{homomorphism} $f$ from $G$ to $H$ is a mapping $f:V(G) \to V(H)$ such that for any edge $uv \in E(G)$, $f(u)f(v) \in E(H)$. We write $f: G \to H$ or simply $G \to H$ to indicate the existence of a homomorphism from $G$ to $H$.  Owing to the fact that a $k$-colouring of $G$ is a homomorphism $G \to K_k$, in general a homomorphism of $G \to H$ is called an \emph{$H$-colouring} of $G$.  Circular colourings are graph homomorphisms to specific target graphs called \emph{circular cliques}. (See~\cite{BondyandHell} for a complete development.) Let $p$ and $q$ be positive integers such that $\frac{p}{q} \geq 2$. Define the \textit{$(p,q)$-circular clique}, $G_{p,q}$, to have vertex set $V(G_{p,q}) = \{0,\ldots,p-1\}$ and edge set $uv \in E(G_{p,q})$ if and only if $ q \leq |u-v| \leq p-q$. Thus, a $(p,q)$-colouring of a graph $G$ is a homomorphism $G \to G_{p,q}$.  Note that $G_{k,1} \cong K_k$ and again we note circular colourings generalize $k$-colourings.

Bondy and Hell~\cite{BondyandHell} show $G_{p,q} \to G_{p',q'}$ if and only if $\frac{p}{q} \leq \frac{p'}{q'}$.  As homomorphisms compose, if $G \to G_{p,q}$, then $G \to G_{p',q'}$ for all $\frac{p'}{q'} \geq \frac{p}{q}$.    Moreover, in~\cite{BondyandHell}, they show there is a minimum $\frac{p}{q}$ such that $G \to G_{p,q}$.  We called this minimum value the \emph{circular chromatic number} and denote it by $\chi_c(G)$.  In particular, $\lceil \chi_c(G) \rceil = \chi(G)$ showing the circular chromatic number is a refinement of the chromatic number.  

One can ask reconfiguration questions about $H$-colourings for general $H$. There are many papers studying such questions, see for instance~\cite{reflexivedigraphs,Wrochna,WROCHNA20181,Frozen,lee2018reconfiguring}. Given graphs $G$ and $H$, the graph $\Col(G,H)$ has as vertices all homomorphisms from $G$ to $H$ with two homomorphisms $f$ and $g$ adjacent if $f(v) \neq g(v)$ for at most one vertex $v \in V(G)$. (This definition implies $\Col(G,H)$ is reflexive, i.e. has a loop on each vertex.  For readers familiar with the exponential graph $H^G$, the reconfiguration graph $\Col(G,H)$ is then a spanning subgraph of the reflexive subgraph of $H^G$. See for example~\cite{Graphsandhomomorphisms}.)  Thus, in general we define the \Hrec{H} problem as taking two homomorphism $f, g: G \to H$ and asking if there is a path from $f$ to $g$ in $\Col(G,H)$.  The \Hmix{H} problem takes as input a graph $G$ and asks if $\Col(G,H)$ is connected.

In this work we study the mixing problem for $(p,q)$-colourings.  Namely, for a graph $G$ we characterize when $\Col(G, G_{p,q})$ is connected and we examine the computational complexity of determining if $\Col(G,G_{p,q})$ is connected.  If $G$ is a yes instance to the \Hmix{G_{p,q}} problem, we say $G$ is \emph{$(p,q)$-mixing}. Of particular interest in this paper are $(2k+1,k)$-colourings.  Since $G_{2k+1,k} \cong C_{2k+1}$, our results on $(2k+1,k)$-mixing are often stated as results for odd cycles.
Our first two main results generalize the Cereceda, van den Heuvel, Johnson \Hmix{3} complexity results to odd cycles for general inputs and for planar inputs to all $3 \leq \frac{p}{q} < 4$. We note that Theorem~\ref{intromaintheorem} encompasses all known complexity results on the \Hmix{H} problem for general inputs.

\begin{thm}
\label{intromaintheorem}
The \Hmix{C_{2k+1}} problem is co-NP-complete for all $k \in \mathbb{Z}^{+}$.
\end{thm}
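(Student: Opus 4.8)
The plan is to show \Hmix{C_{2k+1}} is co-NP-complete in two parts: membership in co-NP, and co-NP-hardness. For membership, recall that $G$ is \emph{not} $(2k+1,k)$-mixing precisely when (by the characterization theorem promised in the introduction) there exists a $(2k+1,k)$-colouring $f$ of $G$ and a cycle $C$ of $G$ whose wind under $f$ is not the distinguished ``no wind'' value. Such a pair $(f,C)$ is a polynomial-size certificate: one can verify in polynomial time that $f$ is a valid homomorphism $G \to C_{2k+1}$ and compute the wind of $C$ under $f$. Hence the complement of \Hmix{C_{2k+1}} is in NP, so \Hmix{C_{2k+1}} is in co-NP. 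This step should be routine once the wind characterization is in hand.

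For hardness, I would reduce from a known co-NP-complete problem, the natural candidate being \Hmix{3}, which Cereceda, van den Heuvel and Johnson~\cite{Mixing3Col} proved co-NP-complete; since $C_3 = K_3$, this is exactly the $k=1$ case. The goal is therefore to lift the $k=1$ hardness to general $k$. The key observation is that $C_{2k+1} = G_{2k+1,k}$ and the family of odd cycles behaves uniformly under the wind invariant: a cycle has nonzero wind with respect to $C_3$ if and only if a suitably associated structure has nonzero wind with respect to $C_{2k+1}$. Concretely, I would seek a polynomial-time graph transformation $G \mapsto G'$ (for instance, subdividing or otherwise ``inflating'' edges, or replacing $G$ by a gadget graph) such that $G$ is $3$-mixing if and only if $G'$ is $(2k+1,k)$-mixing. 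The transformation must preserve the existence of a cycle of nonzero wind across the two targets.

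The main obstacle, which I expect to absorb most of the work, is constructing this wind-preserving reduction and proving its correctness. Because $C_{2k+1}$ has longer girth and a larger colour set than $K_3$, a naive edge-for-edge copy will not transfer homomorphisms faithfully; instead I would exploit the fold machinery advertised in the introduction. Since the paper shows that if $\chi_c(G) = \frac{2k+1}{k}$ then $G$ folds to $C_{2k+1}$, and that $(p,q)$-mixing is closed under folds, the natural strategy is to build $G'$ so that $G'$ folds to $C_{2k+1}$ while encoding a hard $3$-mixing instance, and to argue via the fold that the mixing status is preserved. Verifying that folds interact correctly with winding numbers, so that a nonzero-wind cycle is created in $G'$ exactly when one exists in the original $3$-colouring instance, is the delicate point; this is where I would expect to invest the most care, checking that the wind value scales consistently from modulus $3$ to modulus $2k+1$.
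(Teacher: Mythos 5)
There is a genuine gap in the hardness half of your proposal. Your co-NP membership argument is fine and matches the paper (exhibit a colouring and a wrapped cycle as a NO-certificate via the wind characterization), but the reduction is never actually constructed: you say you would ``seek'' a wind-preserving transformation from \Hmix{3} to \Hmix{C_{2k+1}}, and the entire burden of the theorem lies in that unbuilt gadget. Worse, the specific strategy you sketch is self-defeating. You propose building $G'$ so that $G'$ folds to $C_{2k+1}$. But folds identify vertices at distance two, so a graph that folds to an odd cycle is necessarily non-bipartite, and by Theorem~\ref{bound} every non-bipartite graph has circular mixing number at least $4 > \frac{2k+1}{k}$, hence is automatically \emph{not} $(2k+1,k)$-mixing. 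Every instance your construction produces would be a NO instance of mixing, so it cannot encode anything. The relevant folding target is the even cycle $C_{4k+2}$, not $C_{2k+1}$: the paper's key structural result (Theorem~\ref{oddcyclepinching}) is that a connected \emph{bipartite} graph fails to be $C_{2k+1}$-mixing if and only if it folds to $C_{4k+2}$.

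The paper's hardness proof also starts from a different problem: not from \Hmix{3}, but from the retraction problem \Hret{C_{4k+2}}, which is NP-complete for cycles of length at least $6$ (Feder; Dukes, Emerson, and MacGillivray). The missing bridge, which your sketch has no substitute for, is Vikas's theorem: from a connected bipartite instance $G$ of \Hret{C_{4k+2}} one constructs a connected bipartite $G'$ such that $G$ retracts to $C_{4k+2}$ iff $G'$ retracts to $C_{4k+2}$ iff $G'$ compacts to $C_{4k+2}$; since for connected graphs a retraction yields a folding, and a folding is a compaction of irreflexive graphs, these are further equivalent to $G'$ folding to $C_{4k+2}$ (Theorem~\ref{thm:tfae}). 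Combining with Theorem~\ref{oddcyclepinching}, $G'$ is a NO instance of \Hmix{C_{2k+1}} exactly when $G$ is a YES instance of \Hret{C_{4k+2}}, giving co-NP-hardness. Without the folding characterization at $C_{4k+2}$ and the retraction-to-folding equivalence, your reduction from $3$-mixing has no known implementation; there is no established transfer of mixing hardness between $K_3$ and longer odd cycles of the kind you hypothesize.
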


\begin{thm} 
\label{introplanarity}
Let $G$ be a planar graph and let $p$ and $q$ be positive integers where $3 \leq \frac{p}{q} < 4$. Then it can be determined in polynomial time whether $G$ is $(p,q)$-mixing. 
\end{thm}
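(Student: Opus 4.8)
The plan is to combine our characterization of $(p,q)$-mixing with the cycle/cut duality that is available in planar graphs. By the characterization theorem, for $2 < \frac{p}{q} < 4$ a graph $G$ \emph{fails} to be $(p,q)$-mixing precisely when there is a $(p,q)$-colouring $f$ and a cycle $C$ whose wind under $f$ differs from the canonical value $w_0(C)$. So the algorithm must decide the \emph{existence} of such a witnessing pair $(f,C)$; if none exists the graph is mixing, and this already includes the degenerate case in which $G$ admits no $(p,q)$-colouring at all. Crucially, we never have to decide $(p,q)$-colourability on its own --- which for planar $G$ with $\frac{p}{q}<4$ subsumes planar $3$-colourability and is NP-hard --- because infeasibility of the witness search is itself a certificate of mixing.

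First I would fix a planar embedding of $G$ and exploit the additivity of wind: for a fixed colouring $f$, wind is a $\mathbb{Z}$-linear function on the cycle space of $G$, and since the canonical value corresponds to having no wind it is compatible with the same additive structure. As the bounded facial cycles of the embedding form a basis of the cycle space (after the standard reduction to the $2$-connected case), a colouring $f$ has some cycle of non-canonical wind if and only if it has some \emph{facial} cycle of non-canonical wind. Because there are only $O(|V(G)|)$ faces, it then suffices to decide, separately for each face $F$, whether there is a $(p,q)$-colouring of all of $G$ under which the cycle bounding $F$ has wind different from $w_0(F)$, and then to run this test over every face.

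For a fixed face $F$ this is where I would invoke planar duality. A $(p,q)$-colouring is equivalent to a $\mathbb{Z}_p$-\emph{tension} on the oriented edges of $G$ --- the reduced differences $f(v)-f(u)$ --- constrained so that the circular difference on each edge has absolute value in $[q,p-q]$, and the wind around a cycle is the tension summed around it, divided by $p$. Under planar duality, tensions of $G$ correspond to circulations in the dual $G^{*}$, the per-edge difference constraints become bounds on the dual edges, and forcing the wind around $F$ to take a prescribed non-canonical value becomes a constraint on the circulation at the dual vertex corresponding to $F$. Feasibility of an integer circulation with prescribed lower and upper bounds and a fixed value constraint is decidable in polynomial time by a flow computation, and one ranges over the finitely many non-canonical target winds for $F$ (these are bounded, since each edge contributes a difference of absolute value at most $p-q$). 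This yields a polynomial-time test for each face, and hence for $G$.

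The main obstacle is that the per-edge difference constraint is \emph{not} convex: the admissible reduced differences form $[q,p-q]\cup[-(p-q),-q]$, a set with a forbidden gap around $0$, so it is not directly an interval bound and a naive flow model is invalid (this is precisely the ``nowhere-small'' flavour that makes such problems hard in general). The hypothesis $\frac{p}{q}\ge 3$, equivalently $p-q\ge 2q$, together with the wind-invariance that holds throughout $2<\frac{p}{q}<4$, is what I expect to resolve this: once the target wind around $F$ is fixed, the orientation (sign) of each edge's difference should be pinned down up to the controlled ambiguity permitted by wind-invariance, collapsing each disjunctive constraint to a single interval and restoring a genuine flow feasibility problem. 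Making this sign-reduction precise --- showing that only polynomially many orientation patterns need be considered, and that $\frac{p}{q}\ge 3$ rather than merely $\frac{p}{q}>2$ is exactly what forces this --- is the delicate step, and is presumably also the reason the authors flag $2<\frac{p}{q}<3$ as the challenging remaining range. Carrying this out establishes Theorem~\ref{introplanarity}.
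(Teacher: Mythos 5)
Your opening reduction is sound and even elegant: wind is additive over the plane cycle space (this is Lemma~\ref{chordlesscycle} iterated), so after reducing to $2$-connected blocks a colouring has a wrapped cycle if and only if it has a wrapped \emph{face}, and by Theorem~\ref{badweight} non-mixing is witnessed by a colouring wrapping some face. But the core of your algorithm --- the per-face feasibility test via dual flows --- is exactly where you stop, and the gap there is genuine, not a routine ``delicate step.'' First, the sign-pinning claim is false as stated: fixing the wind of one face $F$ does not determine the orientations of differences on edges elsewhere, and wind-invariance constrains reconfiguration steps, not the set of all colourings; there are exponentially many sign patterns and you give no argument reducing them to polynomially many. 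Second, and more decisively, even when signs \emph{can} be normalized --- for bipartite $G$, orient every edge from one side of the bipartition, and then every edge weight lies in the single interval $[q,p-q]$ with no disjunction at all --- the resulting dual problem is still not a bounded-circulation problem: every face (dual vertex) carries the congruence constraint that its signed boundary sum be $\equiv 0 \pmod{p}$, and prescribing the wind at $F$ leaves a free multiple-of-$p$ choice at every other face. These are group-valued, nowhere-zero-flow-type constraints, not lower/upper bounds, and dualizing them just returns the original colouring problem. Third, your claim that colourability ``comes for free'' breaks on non-bipartite inputs: for an odd face no wind is canonical, so at $\frac{p}{q}=3$ your per-face test literally asks whether a planar graph is $3$-colourable, which is NP-hard. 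The paper instead kills non-bipartite inputs outright via Theorem~\ref{bound} ($m_c(G)\geq 4$, so not mixing), after which only bipartite, trivially colourable graphs remain.

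You also misdiagnose the role of the hypothesis $3 \leq \frac{p}{q}$. In the paper it has nothing to do with edge orientations: it guarantees that $C_6$ is the smallest non-mixing even cycle, so the only short separating cycles that must be eliminated are $4$-cycles, and these can be cut --- $G$ is mixing if and only if $G_{\text{int}}(C)$ and $G_{\text{ext}}(C)$ both are (Lemmas~\ref{easysplit} and~\ref{nosep4cycle}, the latter using that the bipartite exterior retracts onto the shortest cycle $C_4$ and that mixing is closed under folding). After this decomposition, Theorem~\ref{thm:bipartiteplanarmixing} gives the purely combinatorial test: mixing if and only if at most one face has length at least $2k$ (here $2k=6$). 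Note this characterization shows the answer to your per-face question is globally conditioned --- by Lemmas~\ref{lem:atmostonelongface} and~\ref{lem:atleasttwolongfaces} a long face can be wrapped only if a \emph{second} long face exists --- so any correct test must detect that second face, which your single fixed-wind circulation sketch does not visibly do. For $2 < \frac{p}{q} < 3$ it is the cut-and-recurse step that fails (the paper's $(5,2)$ example with a separating $8$-cycle whose interior is non-mixing inside a mixing graph), not any sign-resolution. To repair your proposal you would essentially have to prove the paper's structural characterization anyway, so as it stands the argument does not establish Theorem~\ref{introplanarity}.
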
 

Based on Theorem~\ref{intromaintheorem}, the results in~ \cite{circularReconfig}, and the conjecture that the \Hmix{k} problem is PSPACE-complete for $k \geq 4$, the following conjecture is natural:

\begin{conj}
If $\frac{p}{q} = 2$, then \Hmix{G_{p,q}} is in P. If $2 < \frac{p}{q} < 4$, then \Hmix{G_{p,q}} is co-NP-complete. If $\frac{p}{q} \geq 4$, then \Hmix{G_{p,q}} is PSPACE-complete. 
\end{conj}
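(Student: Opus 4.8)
The conjecture splits into three regimes, which I would attack by quite different means; only the middle and upper regimes remain genuinely open, while the lower one I expect to dispose of outright.

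For $\frac{p}{q}=2$ we have $p=2q$, and $G_{2q,q}$ is precisely the perfect matching on $\{0,\dots,2q-1\}$, each vertex $i$ being adjacent only to $i+q \pmod{2q}$. The plan here is a one-line \emph{freezing} argument: if $f\colon G\to G_{2q,q}$ and $v$ is a non-isolated vertex, then $f(v)$ has a unique neighbour in $G_{2q,q}$, so every $G$-neighbour of $v$ is forced onto that vertex and $v$ cannot be recoloured. Hence every homomorphism is frozen at each non-isolated vertex, and a component containing an edge already admits $2q\ge 2$ distinct (frozen) homomorphisms, so $\Col(G,G_{2q,q})$ is disconnected; conversely an edgeless graph yields a connected Hamming graph. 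Thus $(2q,q)$-mixing reduces to an easily checked condition on $G$ (essentially edgelessness, modulo the convention for non-colourable inputs), placing this regime in P.

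For $2<\frac{p}{q}<4$, membership in co-NP is immediate from the wind characterization stated above: a graph fails to be $(p,q)$-mixing exactly when some $(p,q)$-colouring $f$ admits a cycle $C$ whose wind differs from the prescribed value, and the pair $(f,C)$ is a polynomial-size, polynomial-time-verifiable certificate of non-mixing. The remaining, and harder, task is co-NP-hardness for \emph{all} rationals in this range. Theorem~\ref{intromaintheorem} already supplies it for the dense family $\frac{p}{q}=\frac{2k+1}{k}\in(2,3]$ via odd cycles, so the plan is to generalise the Cereceda--van den Heuvel--Johnson reduction from~\cite{Mixing3Col}: from an instance of a suitable NP-complete problem (for example a not-all-equal SAT variant) build variable-, clause-, and wire-gadgets inside $G_{p,q}$ so that a $(p,q)$-colouring realising a nonzero wind on some cycle exists if and only if the instance is satisfiable, whence $G$ is non-mixing iff the instance is a yes-instance. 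The main obstacle is constructing such wind-controlling gadgets \emph{uniformly} over an arbitrary ratio; the behaviour of wind is delicate for small ratios, and the remark following Theorem~\ref{introplanarity} already signals that the sub-range $2<\frac{p}{q}<3$ (and ratios not of the form $\frac{2k+1}{k}$) is where this difficulty concentrates.

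For $\frac{p}{q}\ge 4$, membership in PSPACE is routine: although $\Col(G,G_{p,q})$ has exponentially many vertices, its connectivity can be decided in polynomial space by testing reachability from a fixed base homomorphism to every other homomorphism, using Savitch's identity $\mathrm{NPSPACE}=\mathrm{PSPACE}$. The hardness is the crux and is precisely why this remains conjectural. One has PSPACE-completeness of \Hrec{(p,q)} for $\frac{p}{q}\ge 4$ from~\cite{circularReconfig}, but that is a \emph{reachability} (two-colouring) statement, whereas mixing asks for \emph{universal} connectivity of the whole reconfiguration graph. Converting the former into the latter requires a completion gadget that renders $\Col(G,G_{p,q})$ connected exactly when the designated source and target colourings are mutually reachable, and engineering such a gadget---thereby bridging the long-standing gap between \Hrec and \Hmix hardness, a gap already reflected in \Hmix{3} being only co-NP-complete---is the step I expect to be the principal barrier to resolving the conjecture.
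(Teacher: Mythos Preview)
The statement you are addressing is a \emph{conjecture}; the paper does not prove it, so there is no proof in the paper against which to compare your attempt. Your write-up is, appropriately, a research plan rather than a proof, and as such it is largely accurate in delineating what is established and what is open: the $\frac{p}{q}=2$ case is indeed disposed of by the freezing argument you give (the paper states this in one line just after the conjecture); co-NP membership for $2<\frac{p}{q}<4$ does follow from Theorem~\ref{introcharacterization}; co-NP-hardness is known only for the ratios $\frac{2k+1}{k}$ by Theorem~\ref{intromaintheorem}; and the $\frac{p}{q}\ge4$ hardness is wide open, mirroring the long-open $k\ge4$ case for classical colouring.

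One point worth flagging: your sketch of the hardness route for $2<\frac{p}{q}<4$ envisions a direct SAT-style gadget reduction in the spirit of~\cite{Mixing3Col}. The paper does \emph{not} proceed this way for the odd-cycle targets. Instead it establishes (Theorem~\ref{foldingcharacterizationintro}) that a connected bipartite $G$ is $C_{2k+1}$-mixing iff it does not fold to $C_{4k+2}$, and then reduces from \Hret{C_{4k+2}} via Vikas's retraction/compaction equivalence (Theorem~\ref{Keytheorem}). This is a structurally different route---folding plus a known NP-hard homomorphism problem---and the paper explicitly conjectures (Conjecture~\ref{finitefamily}) that the right generalisation to arbitrary $\frac{p}{q}$ in the range is a finite forbidden-fold family, not a bespoke SAT gadget. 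If you pursue the middle regime, that folding framework is the more promising template suggested by the paper itself.
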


Note the \Hmix{G_{p,q}} problem is trivial when $\frac{p}{q} = 2$ as a graph $G$ is $(p,q)$-mixing if and only if $G$ has no edges.

Our third result and our key contribution is the following structural characterization of $(p,q)$-mixing when $2 < \frac{p}{q} < 4$ from which Theorem~\ref{intromaintheorem} and Theorem~\ref{introplanarity} follow.  To state the characterization we require the following definitions. Let $G$ be a graph and let $f$ be a $(p,q)$-colouring of $G$. Given an edge $uv$ of $G$, define the \emph{weight of $uv$ under $f$} as $W(uv,f) = (f(v) - f(u)) \bmod{p}$. Note there is an implied direction to $uv$ here.  The following two observations are used throughout the paper.

\begin{obs}\label{obs:negedge}
Let $G$ be a graph and $f$ be a $(p,q)$-colouring of $G$. 
\begin{list}{(\alph{enumi})}{\usecounter{enumi}}
  \item For an edge $uv$ in $G$, $W(uv,f) = - W(vu,f) \bmod{p} = p - W(vu,f)$.
  \item For a path of length 2, $uvw$, with $f(u) = f(w)$, $W(uv,f)+W(vw,f) = p$.
\end{list}
\end{obs}

Given a set of edges from $G$, $S \subseteq E(G)$, we naturally extend the concept of weight by
$$
W(S,f) = \sum_{uv \in S} W(uv,f).
$$
Given a walk $X = x_0, x_1, x_2, \dots, x_l$ in $G$, as a slight abuse of notation we write
$$
W(X,f) = \sum_{i=1}^l W(x_i x_{i-1}, f).
$$
In particular for a cycle of length $l$, $C = c_0, c_1, \dots, c_{l-1}, c_0$,
$$
W(C,f) = \sum_{i=1}^{l} W(c_i c_{i-1},f),
$$
where index arithmetic is modulo $l$.  For a cycle $C$, the sum telescopes and hence 
$$
W(C,f) = p \cdot w_{f}(C) \mbox{~for some integer~} w_f(C).
$$ 
We call $w_f(C)$ the \emph{wind} of $C$ under $f$.

As an example, consider two $(5,2)$-colourings of $C_{10}$ (on vertex set $\{ c_i~|~0 \leq i \leq 9\}$). Define $f$ by $f(c_i) = 2i \bmod{5}$ and $g(c_i) = 0$ if $i$ is even and $g(c_i)=2$ if $i$ is odd.  Then $W(C_{10},f) = 20$ and $w_f(C_{10}) = 4$. Similarly, $W(C_{10},g) = 25$ and $w_g(C_{10}) = 5$. On the other hand, any $(5,2)$-colouring of $C_8$ will have wind $4$.

Now we can state the theorem.

\begin{thm}
\label{introcharacterization}
Fix integers $p,q \in \mathbb{Z}$ such that $2 < \frac{p}{q} < 4$. A graph $G$ is $(p,q)$-mixing if and only if for every $(p,q)$-colouring $f$, and for every cycle $C$ in $G$, the wind of $C$ under $f$ is $\frac{|E(C)|}{2}$. 
\end{thm}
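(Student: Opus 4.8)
The plan is to reduce the whole theorem to a single structural fact, namely that \emph{a single-vertex recolouring never changes the wind of any cycle}; granting this, the equivalence splits into one easy direction and one genuinely substantial direction. First I would prove this invariance lemma. Let $f'$ arise from a $(p,q)$-colouring $f$ by recolouring one vertex $v$, and fix a cycle $C$. If $v\notin V(C)$ then $W(C,f')=W(C,f)$ at once, so suppose $v=c_j$. Only the edges $c_jc_{j-1}$ and $c_{j+1}c_j$ are affected, and all four relevant edge-weights lie in $[q,p-q]$, whence
\[
\Delta:=W(C,f')-W(C,f)\quad\text{satisfies}\quad |\Delta|\le 2\big((p-q)-q\big)=2p-4q.
\]
Since $W(C,\cdot)$ is always a multiple of $p$ we have $\Delta\in p\mathbb{Z}$, while the hypothesis $\tfrac pq<4$ gives $2p-4q<p$; hence $|\Delta|<p$ forces $\Delta=0$. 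Thus wind is constant on each connected component of $\Col(G,G_{p,q})$, and this is the only place the bound $\tfrac pq<4$ is used in an essential way.

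With invariance in hand, necessity is immediate and I would argue by contrapositive. Suppose a colouring $f$ and a cycle $C$ satisfy $w_f(C)\neq\tfrac{|E(C)|}{2}$, and consider the reflected map $\bar f(u):=(p-f(u))\bmod p$. Because circular distance is invariant under negation, $\bar f$ is again a $(p,q)$-colouring, and a direct computation together with Observation~\ref{obs:negedge}(a) gives $W(uv,\bar f)=W(vu,f)=p-W(uv,f)$; summing over the edges of $C$ yields $w_{\bar f}(C)=|E(C)|-w_f(C)\neq w_f(C)$. By the invariance lemma $f$ and $\bar f$ then lie in distinct components, so $G$ is not $(p,q)$-mixing. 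This also disposes of odd cycles automatically, since there $\tfrac{|E(C)|}{2}$ can never equal the integer $w_f(C)$.

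For sufficiency I would exploit that if every colouring has wind $\tfrac{|E(C)|}{2}$ on every cycle $C$, then any two colourings $f,g$ agree in wind on \emph{all} cycles. The task thus reduces entirely to the reconfiguration lemma: if $w_f(C)=w_g(C)$ for every cycle $C$, then $f$ and $g$ are joined by a path in $\Col(G,G_{p,q})$. This is precisely the statement underpinning the polynomial-time algorithm for $\Hrec{(p,q)}$ when $2<\tfrac pq<4$~\cite{circularReconfig}, which I would cite, or reprove as follows: reduce to $G$ connected and lift to the universal cover, whose colourings are homomorphisms into the infinite target on $\mathbb{Z}$ with $i\sim j\iff q\le|i-j|\le p-q$ (so that $G_{p,q}$ is its quotient modulo $p$). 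Agreement of all winds is exactly what makes the difference of integer lifts $\widetilde f-\widetilde g$ descend to a bounded integer function $h$ on $G$. One then drives $f$ toward $g$ by repeatedly recolouring a vertex at which $h$ is positive and maximal, each such move being a legal single-vertex recolouring that strictly decreases $\sum_v h_+(v)$, so the process terminates at $g$.

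The hard part will be this reconfiguration lemma; invariance and necessity are routine once $\tfrac pq<4$ is used as above. Its delicate points are showing that the bounded lift genuinely descends (which is where wind-agreement enters) and that a maximal over-coloured vertex can always be legally lowered while keeping every incident edge-weight in $[q,p-q]$. Both properties break down once $\tfrac pq\ge 4$, consistent with the PSPACE-completeness of $\Hrec{(p,q)}$ there, so I expect the monotone/lattice reconfiguration step to require the bulk of the technical care.
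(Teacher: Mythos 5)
Your invariance lemma and the necessity direction are sound: the bound $|\Delta|\le 2\bigl((p-q)-q\bigr)=2p-4q<p$ is exactly the right use of $\frac{p}{q}<4$, and combining it with the reflection $\bar f(u)=(p-f(u))\bmod p$ gives a clean, self-contained proof that a wrapped cycle obstructs mixing (the paper reaches the same conclusion but routes it through the cited characterization, Theorem~\ref{Path lemma}). The gap is in your sufficiency direction. The lemma you attribute to~\cite{circularReconfig} --- ``if $w_f(C)=w_g(C)$ for every cycle $C$, then $f$ reconfigures to $g$'' --- is not what that paper proves, and it is false as stated. The correct characterization (Theorem~\ref{Path lemma}) carries two further conditions concerning \emph{fixed} vertices, i.e.\ vertices that can never change colour in any reconfiguration sequence. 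Concretely, take $G=C_6$ with $(p,q)=(3,1)$ and let $f$ colour the cycle $0,1,2,0,1,2$: every edge has weight $q=1$, every vertex is frozen, and the cyclic shift $g=(f+1)\bmod 3$ has the same weight on every edge, hence the same wind on the unique cycle, yet $f$ does not reconfigure to $g$. The same example shows exactly where your universal-cover descent stalls: there $h\equiv 1$ is positive everywhere, but no vertex can be legally recoloured, so the step ``a maximal over-coloured vertex can always be legally lowered'' fails precisely at locked vertices. You flag this step as delicate, but it is not merely delicate --- without excluding frozen colourings it is false.

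The hypothesis of the theorem does rule out frozen colourings, but proving that is the actual content of the paper's argument, and it is the idea missing from your proposal. By Lemma~\ref{lockedlemma} and Corollary~\ref{fixedcycle}, a fixed vertex under $f$ forces a cycle $C$ all of whose edge weights equal $q$ (or all equal $p-q$), whence $W(C,f)=q|E(C)|\neq \frac{|E(C)|}{2}p$ since $\frac{p}{q}\neq 2$; so under the hypothesis that every cycle has wind $\frac{|E(C)|}{2}$ under every colouring, no colouring has fixed vertices, the two fixed-vertex conditions of Theorem~\ref{Path lemma} become vacuous, and wind agreement alone then yields mixing (this is how the paper derives Theorem~\ref{badweight} via Lemma~\ref{weightsoncyclelemma}). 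Your proof becomes correct if you either invoke Theorem~\ref{Path lemma} in its full three-condition form and add this fixed-vertex analysis, or build the absence of locked vertices explicitly into your descent argument; as written, the sufficiency direction rests on a lemma with counterexamples.
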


It follows from this characterization, that to certify $G$ is not $(p,q)$-mixing, we simply need to exhibit a $(p,q)$-colouring and a cycle without the required winding number. Hence, we have a concise NO-certificate and thus \Hmix{G_{p,q}} is in co-NP when $2 < \frac{p}{q} < 4$. 
The invariance of wind under reconfiguration turns out to be the key tool in both~\cite{Mixing3Col, circularReconfig} rather than say the number of colours available. Indeed the colourings under study in these previous works, this invariance can be shown to follow from the fact that four cycles must have wind 2. This is made explicit in~\cite{reflexivedigraphs}.

While Theorem~\ref{introcharacterization} characterizes $(p,q)$-mixing for all $2 < \frac{p}{q} < 4$, our co-NP-completeness result is for the special case of $(2k+1,k)$-colourings. (These are the $2$-regular circular cliques and thus extend the results for $3$-colourings in~\cite{Mixing3Col}). Our result for $(2k+1,k)$-mixing exploits a  restricted homomorphism called a fold, which we define now.  Given a graph $G$, and two vertices $x$ and $y$ at distance $2$, let $G_{xy}$ be the graph obtained by identifying $x$ and $y$. Call the new vertex $v_{xy}$.  The homomorphism $f: G \to G_{xy}$ defined by $f(x) = f(y) = v_{xy}$ and $f(u)=u$ for $u \not\in \{x,y\}$ is an \emph{elementary fold}. We say a graph $G$ \emph{folds} to a graph $H$ if there is a homomorphism $f: G \to H$ where $f$ is a composition of elementary folds and $f(G)$ is isomorphic to $H$. We call such a mapping a \emph{folding}.

We now state our next result.

\begin{thm}
\label{foldingcharacterizationintro}
Fix integers $p,q$ such that $2 < \frac{p}{q} < 4$. Suppose  $G$ folds to $H$. If $H$ is not $(p,q)$-mixing, then $G$ is not $(p,q)$-mixing. In particular, for $p = 2q+1$, a bipartite, connected graph $G$ is $(p,q)$-mixing if and only if $G$ does not fold to $C_{4q+2}$. 
\end{thm}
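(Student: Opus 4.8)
The plan is to establish the general statement first (that folding cannot destroy a non-mixing obstruction), and then to combine it with one explicit computation and a winding-to-folding construction to obtain the bipartite characterization. Throughout I would use Theorem~\ref{introcharacterization} to reduce ``non-mixing'' to ``exhibit a colouring and a cycle of wrong wind''.

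For the general claim I would induct on the number of elementary folds, so it suffices to treat a single elementary fold $\phi\colon G \to H = G_{xy}$ identifying $x,y$ (at distance $2$, with common neighbour $z$) to $v_{xy}$. Assuming $H$ is not $(p,q)$-mixing, Theorem~\ref{introcharacterization} supplies a $(p,q)$-colouring $g$ of $H$ and a cycle $C_H$ with $w_g(C_H)\neq |E(C_H)|/2$. Setting $f=g\circ\phi$ gives a $(p,q)$-colouring of $G$, and I would lift $C_H$ to a closed walk in $G$: if $C_H$ avoids $v_{xy}$ it is already a cycle of $G$ with the same (bad) wind; if it passes through $v_{xy}$ with cycle-neighbours $a,b$, I lift the two incident edges to edges at $x$ and/or $y$, and when the two chosen lifts differ I bridge them through the path $x,z,y$. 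Since $f(x)=f(y)=g(v_{xy})$, Observation~\ref{obs:negedge}(b) makes this bridge contribute exactly $p$ to the weight, so the lifted closed walk $W$ satisfies $W(W,f)=W(C_H,g)+p$ and has length $|E(C_H)|+2$; in either case the defect ``wind minus half the length'' equals $w_g(C_H)-|E(C_H)|/2\neq 0$. As wind and length are additive over an edge-decomposition of $W$ into cycles (and length-$2$ backtracks contribute wind equal to half their length by Observation~\ref{obs:negedge}(a)), some genuine cycle of $G$ inherits a nonzero defect, so $G$ is not $(p,q)$-mixing.

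For the bipartite statement with $p=2q+1$, one direction is immediate from the general claim: it suffices to check that $C_{4q+2}$ is itself not $(2q+1,q)$-mixing. Its only cycle is the whole cycle, whose required wind is $2q+1$; but the colouring $c_i\mapsto iq \bmod (2q+1)$ is a valid $(2q+1,q)$-colouring in which every edge has weight exactly $q$, so the total weight is $(4q+2)q$ and the wind is $2q\neq 2q+1$. For the converse, suppose the connected bipartite graph $G$ is not $(2q+1,q)$-mixing and fix a colouring $f\colon G\to C_{2q+1}$ and a cycle $C$ of length $2m$ with $w_f(C)\neq m$. Using connectedness and bipartiteness, let $\epsilon\colon G\to K_2$ be the $2$-colouring and set $\Phi=(f,\epsilon)$; since the categorical product $C_{2q+1}\times K_2$ is exactly $C_{4q+2}$, this is a homomorphism $\Phi\colon G\to C_{4q+2}$. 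Choosing the cyclic coordinate of $C_{4q+2}$ so that an edge of weight $q$ advances it by $+1$ and an edge of weight $q+1$ by $-1$, the image $\Phi(C)$ winds around $C_{4q+2}$ exactly $m-w_f(C)\neq 0$ times; a closed walk of nonzero winding meets every vertex, so $\Phi$ is onto. I would then fold greedily: while some vertex $v$ has degree at least $3$, two of its neighbours map to the same neighbour of $\Phi(v)$, hence are non-adjacent, at distance $2$, and agree under $\Phi$, so folding them keeps $\Phi$ a surjective homomorphism and shrinks $G$. This terminates at a connected, maximum-degree-$2$ graph still carrying the nonzero-winding image of $C$, that is, an even cycle $C_\ell$ with $\ell\ge 4q+2$; finally, repeatedly folding an even cycle down by two vertices brings $C_\ell$ to $C_{4q+2}$, giving the desired folding.

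I expect the converse, and specifically the passage from ``nonzero winding'' to an explicit folding, to be the delicate part. One must verify the coordinatization of $C_{4q+2}\cong C_{2q+1}\times K_2$ under which weights $q$ and $q+1$ become steps $\pm 1$ (so that the winding of $\Phi(C)$ is precisely $m-w_f(C)$), confirm that each greedy step really is an admissible distance-$2$ identification preserving both surjectivity and the winding of $\Phi(C)$, and justify the elementary fact that every even cycle folds onto any shorter even cycle. The general claim, by contrast, is essentially bookkeeping once the bridge-through-$z$ device and the cycle-decomposition of the lifted walk are in place.
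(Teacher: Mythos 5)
Your proposal is correct, and while the first half coincides with the paper, your converse argument is genuinely different. For the general claim, you do essentially what the paper's Lemma~\ref{foldsto} does: lift a wrapped cycle through a single elementary fold, using Observation~\ref{obs:negedge} to see that the bridge $x,z,y$ contributes weight exactly $p$ while adding length $2$, so the ``defect'' $W(\cdot,f)-\tfrac{|E(\cdot)|}{2}p$ is preserved. The only difference is bookkeeping: the paper first invokes Lemma~\ref{chordlesscycle} to assume the wrapped cycle is induced in $G_{xy}$ (so the lift is a genuine cycle), whereas you lift to a closed walk and decompose it into simple cycles and backtracks, noting backtracks have zero defect; since the proof of Lemma~\ref{chordlesscycle} is the same additivity computation, these are morally equivalent, and your version handles the degenerate incidences (e.g.\ $z$ lying on the lifted path) uniformly. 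The real divergence is in the hard direction of the $p=2q+1$ characterization. The paper (Theorem~\ref{oddcyclepinching}) takes a vertex-minimal counterexample and performs two local folds guided by the $C_{2q+1}$-colouring: identifying consecutive-distance-$2$ vertices of the wrapped cycle with equal colours (shortening the wrapped cycle), or, using $2$-regularity of $C_{2q+1}$, absorbing a vertex outside the cycle into it. You instead form the product homomorphism $\Phi=(f,\epsilon)\colon G\to C_{2q+1}\times K_2\cong C_{4q+2}$, translate ``wrapped'' into nonzero winding of $\Phi(C)$ (your coordinatization is correct: with $a$ edges of weight $q$ and $b$ of weight $q+1$ on a cycle of length $2m$, one gets $a-b=(4q+2)(m-w_f(C))$, so the winding is exactly $m-w_f(C)$), and then fold greedily at vertices of degree at least $3$ by pigeonhole on the $2$-regular target, preserving the winding, terminating at an even cycle of length at least $4q+2$ (a tree cannot carry a closed walk of nonzero winding), which folds down to $C_{4q+2}$ by the standard even-cycle fold (the paper's Observation~\ref{cyclefolds}). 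All the delicate steps you flag check out, including non-adjacency of the identified vertices and winding preservation under folds. What each route buys: yours gives a cleaner conceptual picture (non-mixing $=$ a lift of nonzero degree onto $C_{4q+2}$) and a very mechanical termination argument; the paper's minimal-counterexample argument, by working directly with the $C_{2q+1}$-colouring rather than the product $C_{2q+1}\times K_2$, applies verbatim to non-bipartite graphs and so also yields Lemma~\ref{foldingwhennotbipartite} (a connected non-bipartite $C_{2k+1}$-colourable graph folds to $C_{2k+1}$), which your intrinsically bipartite construction via $\epsilon\colon G\to K_2$ does not directly recover.
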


We use Theorem~\ref{foldingcharacterizationintro}  to resolve a conjecture in~\cite{BrewsterNoel} in the affirmative:

\begin{thm}
\label{RickJonconjecture}
For every bipartite graph $G$, there exists an integer $k \in \mathbb{Z}$ such that $G$ is $C_{2k+1}$-mixing.
\end{thm}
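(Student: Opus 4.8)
The plan is to derive the statement directly from the ``in particular'' clause of Theorem~\ref{foldingcharacterizationintro}, which asserts that for $p = 2q+1$ a connected bipartite graph is $(2q+1,q)$-mixing if and only if it does not fold to $C_{4q+2}$. Setting $q = k$ and recalling that $C_{2k+1} \cong G_{2k+1,k}$, so that ``$C_{2k+1}$-mixing'' means precisely $(2k+1,k)$-mixing, it suffices to exhibit, for a given bipartite graph $G$, some integer $k$ for which no component of $G$ folds to $C_{4k+2}$ (note that $\tfrac{2k+1}{k} = 2 + \tfrac1k \in (2,4)$, so the hypotheses of the theorem are met for every $k \ge 1$).

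The crux is a simple monotonicity observation: an elementary fold identifies two \emph{distinct} vertices (they lie at distance $2$), and therefore strictly decreases the vertex count by one. Consequently, if $G$ folds to $H$ then $|V(H)| \le |V(G)|$. Since $|V(C_{4k+2})| = 4k+2$, any choice of $k$ with $4k+2 > |V(G)|$ forces $G$ (and each of its subgraphs) to have too few vertices to fold onto $C_{4k+2}$. For connected $G$ this already finishes the argument via Theorem~\ref{foldingcharacterizationintro}.

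For a disconnected bipartite $G$ with connected components $G_1,\dots,G_m$, I would first reduce to the connected case through the structure of the reconfiguration graph. A homomorphism $G \to C_{2k+1}$ is exactly a tuple of homomorphisms $G_i \to C_{2k+1}$, and two such tuples are adjacent in $\Col(G,C_{2k+1})$ precisely when they differ on a single vertex lying in a single component; hence $\Col(G,C_{2k+1})$ is the Cartesian (box) product $\square_{i=1}^{m}\Col(G_i,C_{2k+1})$. Each factor is nonempty, since $\chi_c(G_i) \le 2 < \tfrac{2k+1}{k}$ guarantees $G_i \to C_{2k+1}$, and a Cartesian product of nonempty graphs is connected if and only if every factor is connected. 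Thus $G$ is $(2k+1,k)$-mixing if and only if each $G_i$ is. Choosing $k$ so that $4k+2 > |V(G)| \ge \max_i |V(G_i)|$, the connected case applies to every $G_i$ (isolated vertices admit no folds at all), so each component is $(2k+1,k)$-mixing, and therefore so is $G$.

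Because Theorem~\ref{foldingcharacterizationintro} carries the genuine content, I do not expect a substantive obstacle here: the only points needing care are the vertex-count monotonicity of folding and the product decomposition of $\Col(G,C_{2k+1})$ for disconnected inputs, both of which are routine. The proof should accordingly be short.
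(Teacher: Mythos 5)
Your proof is correct, and at the top level it follows the same route as the paper's: both invoke the folding characterization (the ``in particular'' clause of Theorem~\ref{foldingcharacterizationintro}, i.e.\ Theorem~\ref{oddcyclepinching}) and then pick $k$ so large that $G$ cannot possibly fold to $C_{4k+2}$. The difference lies in which monotone invariant rules out the fold. You use the vertex count: each elementary fold strictly decreases $|V|$, so any $k$ with $4k+2 > |V(G)|$ suffices --- an argument that is completely immediate. The paper instead uses cycle length: under an elementary fold the length of a cycle stays the same or drops by $2$, so it suffices to choose $k_0$ with $4k_0+2$ exceeding the length of a \emph{longest cycle} of $G$. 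The paper's invariant needs slightly more care (one must check that cycles in the folded graph cannot be longer than cycles of $G$), but it buys a quantitatively better threshold, which the paper observes is tight by considering $C_{4k+2}$ itself; for instance, on a graph with many vertices but only short cycles the largest-cycle bound is small while your vertex bound grows with $|V(G)|$. For the bare existence statement being proved, either threshold works equally well. Your treatment of disconnected $G$ via the decomposition of $\Col(G,C_{2k+1})$ as a Cartesian product of the $\Col(G_i,C_{2k+1})$ (each reflexive and nonempty, since every bipartite component admits a homomorphism to an edge of $C_{2k+1}$) is sound; the paper dispatches the same point with its one-line remark following Theorem~\ref{oddcyclepinching} that folding cannot merge components, so that mixing reduces to the components --- the identical reduction, which you simply prove in more detail.
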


Theorem~\ref{intromaintheorem} follows from Theorem~\ref{foldingcharacterizationintro} by showing that testing if an input graph $G$ folds to $C_{4q+2}$ is NP-complete. For this final step, we build on a theorem of Vikas~\cite{vikas}. 
We conjecture that Theorem~\ref{foldingcharacterizationintro} can be generalized to all $2 < \frac{p}{q} < 4$ by allowing a finite family of forbidden folding targets.

\begin{conj}
\label{finitefamily}
Fix integers $p,q$ such that $2 < \frac{p}{q} < 4$. Then there is a finite family of graphs $\mathcal{G}_{p,q}$ such that $G$ is $(p,q)$-mixing if and only if $G$ does not fold to a graph $H \in \mathcal{G}_{p,q}$. 
\end{conj}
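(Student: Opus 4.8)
The plan is to take $\mathcal{G}_{p,q}$ to be the set of \emph{fold-minimal} non-mixing graphs and then to bound their order. Call a graph $H$ fold-minimal if $H$ is not $(p,q)$-mixing but $H$ does not fold to any non-mixing graph of strictly smaller order. One direction of the desired characterization is then immediate from Theorem~\ref{foldingcharacterizationintro}: since every member of $\mathcal{G}_{p,q}$ is non-mixing, any $G$ that folds to some $H \in \mathcal{G}_{p,q}$ is non-mixing. For the converse, note that an elementary fold strictly decreases the number of vertices, so among all non-mixing graphs to which a fixed non-mixing $G$ folds (a set that contains $G$ itself) one may choose one, say $H$, of minimum order; composing foldings shows that $H$ is fold-minimal and that $G$ folds to $H$. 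Thus $G$ is non-mixing if and only if $G$ folds to a fold-minimal non-mixing graph, and the entire conjecture reduces to showing that, up to isomorphism, there are only finitely many fold-minimal non-mixing graphs. Since folding never increases order, this in turn reduces to a single quantitative statement: \emph{the order of a fold-minimal non-mixing graph is bounded by a function of $p$ and $q$}.

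To establish such a bound I would fix a fold-minimal non-mixing $G$, a $(p,q)$-colouring $f$ witnessing non-mixing via Theorem~\ref{introcharacterization}, and a shortest cycle $C = c_0 c_1 \cdots c_{l-1}$ with $w_f(C) \neq l/2$. The non-bipartite case is essentially degenerate: every odd cycle has integer wind and so can never attain the half-integer value $l/2$, hence an odd cycle is automatically a witness; since a $(p,q)$-colourable odd cycle has length at least $2\lceil q/(p-2q)\rceil + 1$ and every longer odd cycle folds down toward this minimal one, the non-bipartite fold-minimal obstructions are confined to a bounded range of odd lengths. The substance is therefore the bipartite case, where $l$ is even and I must bound the length of a shortest bad even cycle. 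The governing inequality is that each forward weight $W(c_i c_{i+1}, f)$ lies in $\{q, \dots, p-q\}$, so the per-edge deviation from $p/2$ is at most $(p-2q)/2$; combined with $\sum_i W(c_i c_{i+1}, f) = p\, w_f(C)$ and the fact that a bad even cycle satisfies $|w_f(C) - l/2| \ge 1$, this already forces $l \ge 2p/(p-2q)$, matching the value $l = 2p$ realized by the obstruction $C_{4q+2}$ when $p = 2q+1$.

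The crux is the matching \emph{upper} bound, and this is where I expect the real difficulty to lie. The key reduction step is supplied by Observation~\ref{obs:negedge}(b): if two vertices of $C$ at cyclic distance $2$ receive the same colour under $f$, then the length-$2$ subpath joining them carries weight exactly $p$, so folding those two vertices excises a ``balanced'' piece, shortens $C$ by two, and lowers its wind by one, leaving a strictly shorter bad cycle and contradicting fold-minimality. Exhausting such folds, one may assume $C$ has no monochromatic pair at cyclic distance $2$. To pass from this local condition to an absolute bound on $l$, I would read the colour sequence $f(c_0), f(c_1), \dots$ as a closed walk in $G_{p,q}$ and argue, in the spirit of Wrochna's topological analysis~\cite{Wrochna, WROCHNA20181}, that for $2 < \frac{p}{q} < 4$ the graph $G_{p,q}$ is homotopically a circle: closed walks are classified up to elementary folds by their winding number, and a shortest representative of any fixed nonzero wind defect $w_f(C) - l/2$ has length bounded in terms of $p$ and $q$. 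The main obstacle is making this precise at the level of folds in $G$ rather than homotopies in $G_{p,q}$: one must show that each elementary homotopy contracting the colour walk is realizable as an elementary fold of $G$, i.e.\ a distance-exactly-$2$ identification of equal-coloured vertices, which is delicate because the pair forced by the colour walk need not be at distance $2$ in $G$.

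Finally, once $C$ has bounded length I would bound the remaining vertices. A vertex off $C$ attaches to the bounded structure of $C$, and because $G_{p,q}$ carries no topology beyond that of the circle, any attached subgraph ought to be contractible into $C$ by further folds unless it is pinned by rigidity of $f$; fold-minimality would then confine $G$ to a bounded neighbourhood of $C$, yielding the order bound and hence finiteness of $\mathcal{G}_{p,q}$. I expect the bipartite length bound of the preceding paragraph to be the decisive and hardest step, with the non-bipartite analysis and the off-cycle bookkeeping following the same folding-plus-topology template but being combinatorially lighter.
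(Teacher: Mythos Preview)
The statement you are attempting to prove is labelled as a \emph{conjecture} in the paper; the authors do not prove it and offer no proof sketch. So there is no ``paper's own proof'' to compare against. Your proposal should therefore be assessed on its own merits as an attempted resolution of an open problem.

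Your reduction to fold-minimal non-mixing graphs is sound, and your lower bound on the length of a shortest bad even cycle is correct. The genuine gap, which you yourself flag, is the upper bound. Your core reduction step (fold two vertices of $C$ at cyclic distance $2$ with the same colour) is exactly the first case in the paper's proof of Theorem~\ref{oddcyclepinching}. But look at how that proof handles the complementary case, when no such monochromatic pair exists on $C$: it takes a neighbour $a \notin V(C)$ of some $y \in V(C)$ and uses that $C_{2k+1}$ is $2$-regular to force $\psi(a)$ to coincide with the colour of one of $y$'s cycle-neighbours, producing a fold that absorbs $a$. For general $G_{p,q}$ with $2 < p/q < 4$ and $p \neq 2q+1$, the target is not $2$-regular, and this step simply fails: a vertex off $C$ can receive a colour shared by no vertex at distance $2$ from it, so there is no evident fold available. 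Your appeal to the homotopy type of $G_{p,q}$ does not bridge this gap, because (as you note) a homotopy of the colour walk in $G_{p,q}$ need not correspond to a fold in $G$: folds require the identified vertices to be at distance exactly $2$ in $G$, not merely to carry the same colour.

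The same obstruction undermines your ``off-cycle bookkeeping'': even granting a bounded bad cycle $C$, you have no mechanism to fold the rest of $G$ into a bounded neighbourhood of $C$ once $2$-regularity of the target is lost. Indeed, the paper explicitly points to Zhu's theorem that for $p \neq 2q+1$ and $q \neq 1$ there are strict subgraphs of $G_{p,q}$ with circular chromatic number $p/q$, precisely to explain why the single-target folding characterization breaks down beyond odd cycles; this is evidence that the combinatorics for general $(p,q)$ is qualitatively harder, not merely ``lighter bookkeeping''. In short, your outline recapitulates the odd-cycle argument and correctly identifies where it would need to be strengthened, but does not supply the missing idea; the conjecture remains open.
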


In Section~\ref{windingcharacterizationsection} we prove Theorem~\ref{introcharacterization}. Given two $(p,q)$-colourings $f$ and $g$ ($2 < \frac{p}{q} < 4$) of a graph $G$, the characterization in~\cite{circularReconfig} asserts $f$ reconfigures to $g$, if for every cycle $C$ in $G$, $w_f(C)=w_g(C)$, and further, the set of fixed vertices (vertices which can never change colour under reconfiguration) and their images are the same under both $f$ and $g$.  We show the existence of fixed vertices implies the existence of a cycle $C$ whose wind is not $\frac{|E(C)|}{2}$. We then argue that given a $(p,q)$-colouring $f$ and a cycle $C$ whose wind is not $\frac{|E(C)|}{2}$, we can define a new $(p,q)$-colouring $g$ such that $w_f(C) \neq w_g(C)$. The characterization in~\cite{circularReconfig} gives that $f$ does not reconfigure to $g$, and hence the graph is not $(p,q)$-mixing. The converse of Theorem~\ref{introcharacterization} follows similarly by appealing to the characterization in~\cite{circularReconfig}.

In Section~\ref{foldingsection} we prove Theorem~\ref{foldingcharacterizationintro}, Theorem~\ref{RickJonconjecture} and Theorem~\ref{intromaintheorem}. To prove Theorem~\ref{foldingcharacterizationintro} we analyse how folding impacts the winds of cycles, and then appeal to Theorem~\ref{introcharacterization}. A consequence of Theorem~\ref{foldingcharacterizationintro} is the following.  Let $G$ be a bipartite graph that is not $(p,q)$-mixing.  Since $G$ is bipartite, it folds to $K_2$.  This folding produces a sequence of graphs starting at $G$ and ending at $K_2$.  In the sequence there is a graph $H$ such that $H$ and all its predecessors are not $(p,q)$-mixing while all successors of $H$ are $(p,q)$-mixing.  In the special case of $C_{2k+1}$-mixing, we prove such a sequence can be found where $H = C_{4k+2}$.   From this characterization, Theorem~\ref{RickJonconjecture} follows easily. 

In Section~\ref{planarsection} we prove Theorem~\ref{introplanarity}. The general approach is similar to the approach taken for \Hmix{3} in~\cite{Mixing3Col}. We first argue that our graph can be assumed to be $2$-connected. Next, we argue for any $2 < \frac{p}{q} < 4$, if the graph has 
a planar embedding with at most one large face (large depends on $p$ and $q$), then it is $(p,q)$-mixing.  The converse holds in the absence of short separating cycles (short depends on $p$ and $q$); however, the general situation seems complicated.  When $3 \leq \frac{p}{q} <4$, we show that we can decompose the graph into subgraphs with no short separating cycles (specifically $C_4$) and apply the characterization to each subgraph.
The polynomial time algorithm follows easily from this, as the blocks can can be found in polynomial time as can all separating 4-cycles.  Once decomposed, listing the face lengths of a planar embedding can be done in polynomial time.
By contrast we give some examples when $2 < \frac{p}{q} < 3$ where we cannot decompose our graph to have no small separating cycles, showing that the situation is more complex.

\section{Characterizing Mixing by Cycle Winds}
\label{windingcharacterizationsection}

In this section, for $2 \leq \frac{p}{q} < 4$, we characterize graphs that are $(p,q)$-mixing. As noted above, for $\frac{p}{q} = 2$, \Hmix{G_{p,q}} is trivially in P since a graph $G$ is $(p,q)$-mixing if and only if $G$ has no edges. 

Our characterization of $(p,q)$-mixing builds on the following characterization of $(p,q)$-colouring reconfiguration.  Given a $(p,q)$-colouring $f$ of $G$,  a vertex $v$ is \emph{fixed under $f$} if for all $(p,q)$-colourings $g$ to which $f$ reconfigures, we have $g(v) = f(v)$.  
\begin{thm}[\cite{circularReconfig}]
\label{Path lemma}
Fix $p,q \in \mathbb{N}$ such that $2 \leq \frac{p}{q} <4$. Let $G$ be a graph and let $f, g$ be two $(p,q)$-colourings of $G$. Then $f$ reconfigures to $g$ if and only if the following three conditions are satisfied:
\begin{itemize}

\item{For all vertices $v \in V(G)$, if $v$ is fixed under $f$, then $v$ is fixed under $g$. Furthermore, for all fixed vertices $v$, we have $f(v) = g(v)$.}

\item {For all cycles $C$ in $G$, $W(C,f) = W(C,g)$.}

\item {For all paths $P$ in $G$ whose endpoints are fixed, $W(P,f) = W(P,g)$.}
\end{itemize}
\end{thm}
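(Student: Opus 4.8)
The plan is to prove both implications, treating necessity (reconfigurability forces the three conditions) as the routine direction and reserving the bulk of the effort for sufficiency. The engine for necessity is a single local lemma about recolouring one vertex, and this is precisely where the hypothesis $\frac{p}{q}<4$ enters. Fix a vertex $v$ with two neighbours coloured $a$ and $c$, and consider the function $S(x) = ((x-a)\bmod p) + ((c-x)\bmod p)$ recording the combined weight of the two edges at $v$ when $f(v)=x$; this is exactly the contribution of $v$ to $W(C,f)$ for a cycle, or to $W(P,f)$ for a path, passing through $v$ via these two neighbours. I would first show the set of colours valid for $v$ with respect to these two neighbours is a single circular arc on which $S$ is constant. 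Each neighbour coloured $a$ forbids exactly the $2q-1$ colours within circular distance $q-1$ of $a$ (the ``too far'' constraint is vacuous since the maximum circular distance $\lfloor p/2\rfloor$ never exceeds $p-q$). Two such forbidden arcs have total size at most $4q-2$; since $\frac{p}{q}<4$ forces $p\le 4q-1$, we get $p-2(2q-1)=p-4q+2\le 1$, so the two forbidden arcs cannot leave two separate gaps. Hence their union is a single arc, the valid set is a single arc, and on it increasing $x$ by one raises $(x-a)\bmod p$ by one while lowering $(c-x)\bmod p$ by one with no wraparound, so $S$ is constant. (For $\frac{p}{q}\ge 4$ the valid set can split into two arcs carrying different values of $S$, which is exactly why the theorem fails there.)

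Necessity then follows quickly. Recolouring an internal vertex of a cycle or path keeps its colour inside the single valid arc for its two consecutive neighbours, so the two-edge contribution $S$ is unchanged while every other edge weight is untouched; thus one recolouring step preserves $W(C,\cdot)$ for every cycle and $W(P,\cdot)$ for every path. Inducting along a reconfiguration sequence yields the cycle and path conditions, noting that the endpoints of $P$, being fixed, are never recoloured. For the fixed-vertex condition, reachability in $\Col(G,G_{p,q})$ is an equivalence relation, so $f$ and $g$ reach exactly the same colourings; hence they have the same fixed vertices and must agree on them.

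For sufficiency I would build an integer height function from the cycle condition and then reconfigure by advancing extremal vertices. Assume $G$ connected (components are independent). Set $\eta(uv)=W(uv,f)-W(uv,g)$; by Observation~\ref{obs:negedge} this is antisymmetric, and the cycle condition gives $\sum_{uv\in C}\eta(uv)=0$ for every cycle, so $\eta$ is a coboundary: there is $h\colon V\to\mathbb Z$, unique up to an additive constant, with $h(v)-h(u)=\eta(uv)$ and $h(v)\equiv f(v)-g(v)\pmod p$. If $G$ has a fixed vertex I normalize $h$ to vanish there, and the path condition applied to paths between fixed vertices then forces $h$ to vanish at every fixed vertex simultaneously. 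The goal is to reconfigure $f$ until $h$ is identically zero, since $h\equiv 0$ means $W(uv,f)=W(uv,g)$ on every edge, whence $f$ and $g$ differ by a global rotation that is trivial once pinned at a fixed vertex (and is itself reconfigurable when no vertex is fixed). I would induct on the potential $\Phi=\sum_v |h(v)|$: pick a vertex $v$ of maximum height, argue that maximality leaves enough slack to recolour $f(v)$ one circular step toward $g(v)$ while keeping every incident edge valid, and observe that this strictly decreases $\Phi$.

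The main obstacle is exactly this advancing step: showing that whenever $f\neq g$ some extremal vertex can legally be moved, and that the only vertices resisting every move are genuinely fixed, so that by the normalization above they already carry $h=0$ and need not move. This demands a careful analysis of when a vertex is \emph{frozen} by its neighbourhood, the circular analogue of the rigid configurations in the $3$-colouring case, and it is here that all three hypotheses are used together: the cycle condition to define $h$, the path condition to pin $h$ on the fixed set, and the fixed-vertex condition to guarantee the frozen obstructions of $f$ and $g$ coincide. Once freezing is understood, the potential argument terminates at a colouring with $h\equiv 0$, completing the reconfiguration of $f$ to $g$.
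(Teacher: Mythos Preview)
This theorem is not proved in the present paper; it is quoted from~\cite{circularReconfig} and used as a black box, so there is no proof here to compare your proposal against.

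That said, your outline is the natural approach and is in the spirit of~\cite{circularReconfig}. The single-valid-arc lemma (which is exactly where $p/q<4$ enters) is the right tool for necessity, and your arithmetic $p-2(2q-1)\le 1$ is the correct reason the valid set cannot split into two arcs. For sufficiency, building an integer height function $h$ from the cocycle $\eta$ and running a potential argument on $\sum_v|h(v)|$ is again the standard route. The obstacle you flag---showing that a vertex of extremal height that resists recolouring must lie on a tight directed cycle of edges all of weight $q$ (or all $p-q$) and is therefore genuinely fixed---is real and is precisely the content that surfaces in the present paper as Lemma~\ref{lockedlemma} and Corollary~\ref{fixedcycle}. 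Filling that step in is the substantive work of the sufficiency direction, and your sketch stops just short of it: maximality of $h(v)$ gives only $W(uv,f)\ge W(uv,g)\ge q$, not the strict inequality $W(uv,f)\ge q+1$ needed to move $f(v)$, so one must chase the equality case around a cycle. Your proposal is honest about this gap, so as a plan it is sound, but as a proof it remains incomplete at exactly the point where the three hypotheses interact.
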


Thus, to certify that $G$ is not $(p,q)$-mixing, it suffices to find two colourings that violate one of the three conditions above.  However, we now show for the mixing problem this reduces to checking the weight of cycles.
Indeed if $G$ admits a $(p,q)$-colouring $f$ with fixed vertices, then $G$ is not $(p,q)$-mixing. To see this, suppose $v$ is a fixed vertex under $f$. Define $g$ such that for all $x \in V(G)$, $g(x) = (f(x) +1) \bmod p$. Then $g$ is a $(p,q)$-colouring, since for any edge $xy \in E(G)$, we have $|g(x) -g(y)| = |f(x)-f(y)|$.  As $f(v) \neq g(v)$, $f$ does not reconfigure to $g$.  

We now show that the existence of fixed vertices under some colouring $f$ implies the existence of two colourings and a cycle $C$ on which the colourings have different weights. 

Given a graph $G$ and a $(p,q)$-colouring $f$, a vertex $v \in V(G)$ is \emph{locked} if for all $(p,q)$-colourings $g$ which are adjacent to $f$ in $\Col(G,G_{p,q})$ satisfy $f(v) = g(v)$. (That is, $v$ cannot change colour until some other vertex does first.)

\begin{lem}[\cite{circularReconfig}]
\label{lockedlemma}
Let $2 < \frac{p}{q} < 4$.  Suppose $G$ is a graph and $f$ is a $(p,q)$-colouring of $G$. Then a vertex $v$ is locked if and only if there exist vertices $u, w \in N(v)$ such that $W(uv,f)=W(vw,f)=q$ or $W(uv,f)=W(vw,f)=p-q$.
\end{lem}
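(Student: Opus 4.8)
The plan is to prove Lemma~\ref{lockedlemma} by directly analysing, for each vertex $v$, the set of colours to which $v$ can be legally recoloured while all other vertices stay fixed. A vertex $v$ is locked precisely when this set is a singleton, namely $\{f(v)\}$. So first I would fix a colour $c = f(v)$ and determine which colours $c' \in \{0,1,\dots,p-1\}$ are \emph{available}, meaning that changing $v$'s colour to $c'$ keeps a valid $(p,q)$-colouring. Since only $v$ moves, $c'$ is available exactly when $q \leq |c' - f(w)| \leq p-q$ (circular distance) for every neighbour $w$ of $v$, and $c' = c$ is trivially available because $f$ is already a colouring. Thus $v$ is locked iff $c$ is the \emph{only} colour simultaneously at circular distance in $[q, p-q]$ from all of $f(N(v))$.

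Next I would translate each neighbour constraint into the geometry of $\mathbb{Z}_p$. For a single neighbour $w$, the set of colours at circular distance in $[q,p-q]$ from $f(w)$ is an arc (a contiguous block in the cyclic order): it is the complement of the two ``forbidden'' arcs of length $q-1$ sitting immediately clockwise and counterclockwise of the antipodal-type region near $f(w)$. Concretely, writing weights $W(wv,f)$ as in the paper, the admissible colours for $v$ relative to $w$ form a circular interval. The available set for $v$ is the intersection of these arcs over all $w \in N(v)$, and it always contains $c$. The crucial quantitative input is the hypothesis $2 < \frac{p}{q} < 4$, equivalently $p < 4q$ and $p > 2q$; this controls how wide each admissible arc is (it has length $p - 2q + 1$, which lies strictly between $1$ and $2q+1$), and it is exactly this width bound that forces the intersection to collapse to a single point only in a very specific configuration.

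The main work is the equivalence itself. For the ``if'' direction, suppose there exist $u,w \in N(v)$ with $W(uv,f) = W(vw,f) = q$, or both equal to $p-q$; I would check that the admissible arcs coming from $u$ and from $w$ are two arcs whose intersection is the single colour $c$, using $p < 4q$ to rule out any second common colour. (The two cases $q$ and $p-q$ are mirror images under the colour-shift symmetry, so one computation suffices.) For the ``only if'' direction, I would argue the contrapositive: if no such pair $u,w$ exists, then at $v$ the neighbour weights never pin $v$ tightly on both sides in the same rotational sense, so the intersection of admissible arcs is an arc of length at least $2$ containing $c$, giving an available colour $c' \neq c$ and hence $v$ is not locked. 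Here I expect the main obstacle to be the bookkeeping of the cyclic arithmetic: carefully ruling out ``wrap-around'' coincidences where two arcs might meet in a second point on the far side of the circle. This is precisely where $\frac{p}{q} < 4$ is indispensable, since for $\frac{p}{q} \geq 4$ the arcs are wide enough that a locked vertex could arise from configurations other than the two extremal weight patterns, breaking the clean characterization. I would handle this by reducing, via Observation~\ref{obs:negedge} and the colour-shift symmetry $x \mapsto x+1$, to a normalized position for $v$ and then enumerating the finitely many arc-boundary alignments.
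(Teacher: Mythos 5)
The paper never proves Lemma~\ref{lockedlemma}; it is imported verbatim from \cite{circularReconfig}, so there is no internal proof to compare against. Judged on its own merits, your arc-intersection argument is correct and is essentially the standard proof of this fact. Concretely: the colours admissible for $v$ against a neighbour $w$ form the single circular interval $\{f(w)+q,\dots,f(w)+p-q\}$ of $p-2q+1$ colours; writing each such interval as $[c-a_w,\,c+b_w]$ with $c=f(v)$, one gets $a_u = W(uv,f)-q$ and $b_w = W(vw,f)-q$, and since each interval spans $p-2q < p/2$ steps (this is exactly where $p<4q$ kills the wrap-around coincidences you flag), the intersection over $N(v)$ is the interval $[c-\min_u a_u,\, c+\min_w b_w]$, which collapses to $\{c\}$ if and only if some $u$ has $W(uv,f)=q$ and some $w$ has $W(vw,f)=q$. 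Your ``if'' computation matches: the two extremal arcs meet in $c+t$ for $t \in \{0\} \cup [2q,\,p-2q]$, and $[2q,\,p-2q]=\emptyset$ precisely when $p<4q$ (at $p=4q$ the antipodal colour $c+p/2$ survives, so the hypothesis is sharp, as you observe). Two small corrections, neither of which breaks the argument. First, the forbidden colours relative to $w$ form a \emph{single} arc of $2q-1$ colours centred at $f(w)$, not ``two forbidden arcs of length $q-1$''; your operative claim --- that the admissible set is one arc of $p-2q+1$ colours --- is what the proof actually uses, so this is only a slip of description. Second, the two cases of the lemma are not exchanged by the colour-shift $x \mapsto x+1$, which fixes all weights; they are exchanged by the reflection $x \mapsto p-x$, or more simply, by Observation~\ref{obs:negedge}(a) the condition $W(uv,f)=W(vw,f)=p-q$ is literally the condition $W(wv,f)=W(vu,f)=q$ with the roles of $u$ and $w$ swapped, so the two cases are the same configuration and only one computation is needed, with no symmetry argument at all.
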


\begin{cor}[\cite{circularReconfig}]
\label{fixedcycle}
Let $2 < \frac{p}{q} < 4$. Given a graph $G$ and a $(p,q)$-colouring $f$ of $G$, suppose that $c_0$ is a fixed vertex under $f$. Then there exists a cycle $C$, $V(C) = c_0, c_1, \ldots,c _{l-1}, c_0$ such that $W(c_i c_{i+1},f) = q$ for each $i$ or $W(c_i c_{i+1},f) = p-q$ for each $i$. Here the subscripts on the vertices are taken modulo $l$.  
\end{cor}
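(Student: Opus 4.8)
The plan is to build the cycle $C$ by starting at the fixed vertex $c_0$ and greedily extending a walk using the locked structure guaranteed by Lemma~\ref{lockedlemma}, until we are forced to revisit a vertex and thereby close a cycle. Since $c_0$ is fixed, it is in particular locked, so by Lemma~\ref{lockedlemma} there exist neighbours $u,w$ of $c_0$ with either $W(u c_0,f)=W(c_0 w,f)=q$ or $W(u c_0,f)=W(c_0 w,f)=p-q$. I would first argue that we may orient consistently: say we are in the case where edges of weight $q$ emanate from $c_0$. The key observation is that once we arrive at a vertex along an edge of weight $q$ (measured in the direction of travel), that vertex must itself be locked \emph{and} locked via two weight-$q$ edges, because the incoming edge already supplies one weight-$q$ edge at that vertex; hence the outgoing edge the lemma provides must also have weight $q$ in our direction of travel.

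The heart of the argument is this propagation claim, which I expect to be the main obstacle. Concretely, suppose we have walked $c_0, c_1, \dots, c_i$ with $W(c_j c_{j+1},f)=q$ for all $j<i$. I must show $c_i$ is itself fixed (or at least locked) so that the extension continues, and crucially that the next edge can again be taken with weight $q$ in the forward direction. The subtlety is that Lemma~\ref{lockedlemma} gives \emph{two} neighbours $u,w$ realizing the locked condition, and I need the incoming neighbour $c_{i-1}$ to be one of the pair so that the other neighbour furnishes a forward edge of the \emph{same} weight $q$. Here I would use Observation~\ref{obs:negedge}(a): the incoming edge $c_{i-1}c_i$ has $W(c_{i-1}c_i,f)=q$, so viewed at $c_i$ as $W(c_i c_{i-1},f)=p-q$. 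Since $2<\frac{p}{q}<4$ forces $q<p-q<2q$, the value $p-q$ is distinct from $q$, and one checks that the locked condition at $c_i$ cannot be realized by two edges of weight $p-q$ without contradicting the homomorphism constraint $q\le W\le p-q$ on the incoming edge; this pins down the weight-$q$ alternative and yields the forward edge.

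To formalize why $c_i$ remains fixed (not merely that a forward edge exists), I would invoke that $c_0$ is fixed rather than just locked: a standard propagation lemma from~\cite{circularReconfig} states that along a tight edge (weight exactly $q$ or $p-q$), fixedness transfers between endpoints, since any reconfiguration of one endpoint would force the other. Thus each $c_i$ along our monochromatic-weight walk is fixed, and in particular locked, so Lemma~\ref{lockedlemma} applies afresh at every step, guaranteeing the walk never terminates. I would phrase this transfer carefully, as it is the linchpin that lets the walk continue indefinitely.

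Since $G$ is finite, the walk $c_0, c_1, c_2, \dots$ with all forward weights equal to $q$ must eventually repeat a vertex, say $c_a = c_b$ with $a<b$. Then $C = c_a, c_{a+1}, \dots, c_{b-1}, c_a$ is a closed walk in which every edge has weight $q$ in the direction of traversal. I would then extract a genuine cycle from this closed walk: taking the \emph{first} repeated vertex ensures $c_a,\dots,c_{b-1}$ are distinct, so $C$ is an honest cycle, and every edge retains weight $q$. In the symmetric case where $c_0$'s locked pair uses weight $p-q$ edges, the identical argument produces a cycle with every edge of weight $p-q$. Either outcome is exactly the conclusion of the corollary, with the $(c_i,c_{i+1})$ indexing matching the statement. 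I do not expect the finiteness-to-cycle extraction to present difficulties; the entire weight of the proof lies in establishing that fixedness and the weight-$q$ (or weight-$(p-q)$) forward condition propagate along the walk.
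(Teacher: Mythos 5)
Your attempt has a genuine gap at exactly the step you yourself flag as the linchpin: the claim that fixedness (or even lockedness) propagates along a tight edge is false. Concretely, let $c_0$ be a fixed vertex and attach a pendant vertex $w$ with $f(w) = (f(c_0)+q) \bmod p$. The edge is tight, $W(c_0 w,f)=q$, yet $w$ is not locked, let alone fixed: its unique neighbour is $c_0$, so $w$ may be recoloured to any of the $p-2q+1 \geq 2$ colours at circular distance at least $q$ from $f(c_0)$ (using $p>2q$). So there is no ``standard propagation lemma'' transferring fixedness across tight edges, and worse, Lemma~\ref{lockedlemma} at $c_i$ only asserts that \emph{some} pair $(u,w)$ exists --- nothing prevents the forward neighbour it hands you from being exactly such a pendant, at which your walk dies with no way to continue. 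Your fallback argument is also spurious: there is no conflict between an incoming weight-$q$ edge at $c_i$ and a locked pair of weight $p-q$ realized by other neighbours. In fact, by Observation~\ref{obs:negedge}(a), a pair with $W(u'c_i,f)=W(c_i w',f)=p-q$ is the same thing as the weight-$q$ pair $W(w'c_i,f)=W(c_i u',f)=q$, so \emph{whenever} $c_i$ is locked, a forward weight-$q$ edge exists automatically --- the case distinction you worry about was never the obstacle. The sole obstacle is establishing that $c_i$ is locked at all, and your proof offers no valid reason for it.

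This is also why the greedy-local strategy cannot be repaired in place: lockedness of the vertices along the walk is a \emph{global} consequence of $c_0$ being fixed, not a local consequence of one tight incoming edge. Note that the paper itself does not prove this corollary; it imports it from~\cite{circularReconfig}, where the argument runs contrapositively. One considers the digraph of tight arcs ($u \to v$ when $W(uv,f)=q$) and shows that if $c_0$ does not lie on a directed cycle of this digraph, then unlocked vertices (for instance, sinks among those reachable from $c_0$) can be recoloured in a suitable order, destroying tight arcs one at a time until $c_0$ itself becomes unlocked and can be recoloured --- contradicting that $c_0$ is fixed. A secondary flaw in your write-up: even granting your propagation claim, extracting the cycle at the first repeated vertex $c_a=c_b$ yields a cycle avoiding $c_0$ whenever $a>0$, whereas the statement places the fixed vertex $c_0$ on the cycle (though for the paper's later applications any constant-weight cycle would suffice).
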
 

Whereas, fixed vertices play a crucial role in the the $\Hrec{G_{p,q}}$ problem, we now show that, for $2 < \frac{p}{q} <4$, testing if $G$ is $(p,q)$-mixing reduces to the single condition of checking the wind of cycles. 

\begin{lem}
\label{weightsoncyclelemma}
Let $2< \frac{p}{q} <4$ and $G$ be a graph. The graph $G$ is not $(p,q)$-mixing if and only if 
there is a cycle $C$ in $G$ and two colourings $f$ and $g$ where $W(C,f) \neq W(C,g)$. 
\end{lem}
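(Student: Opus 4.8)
The plan is to prove both directions by leveraging Theorem~\ref{Path lemma}, which characterizes reconfigurability via three conditions, and the preceding observations that reduce the role of fixed vertices to the existence of a badly-wound cycle. The statement asserts that non-mixing is equivalent to the existence of a single cycle on which two colourings disagree in weight, so the goal is to show that the apparently more complex obstructions in Theorem~\ref{Path lemma} (fixed-vertex conditions and fixed-endpoint path conditions) can all be collapsed into a cycle-weight discrepancy.

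The easy direction is the reverse implication. Suppose there is a cycle $C$ and colourings $f,g$ with $W(C,f) \neq W(C,g)$. Then the second bullet of Theorem~\ref{Path lemma} is violated, so $f$ does not reconfigure to $g$, and hence $\Col(G,G_{p,q})$ is disconnected; thus $G$ is not $(p,q)$-mixing. This requires no real work beyond quoting the theorem.

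For the forward direction, I would assume $G$ is not $(p,q)$-mixing and produce the desired cycle and pair of colourings. By Theorem~\ref{Path lemma}, failure of mixing means there exist two colourings $f,g$ violating at least one of the three conditions. If the violated condition is the cycle condition (second bullet), we are immediately done. So the main task is to handle the case where mixing fails only because of a fixed-vertex discrepancy (first bullet) or a fixed-endpoint path discrepancy (third bullet). The key insight, already sketched in the text preceding the lemma, is that the presence of any fixed vertex under some colouring $f$ already certifies non-mixing, and by Corollary~\ref{fixedcycle} a fixed vertex $c_0$ lies on a cycle $C$ all of whose edges have weight $q$ (or all weight $p-q$). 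Such a cycle has an extremal weight $W(C,f) = q|E(C)|$ (or $(p-q)|E(C)|$), and since $2 < \frac{p}{q} < 4$ forces $q \neq p-q$, I can produce a second colouring $g$ whose weight on $C$ differs—for instance by exhibiting a colouring realizing a different wind on $C$, or by arguing that not every colouring of $G$ can force this extremal weight on $C$. This furnishes the required cycle-weight discrepancy.

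The main obstacle I anticipate is the path condition (third bullet): a priori, mixing could fail solely because two colourings agree on all cycles yet differ on some path $P$ joining two fixed endpoints. The resolution is that this case does not arise independently—if $G$ has any fixed vertex at all, the fixed-cycle machinery of Corollary~\ref{fixedcycle} already yields a cycle obstruction, so I never need to invoke the path condition on its own. Concretely, I would argue: if conditions one or three are at issue, then $G$ admits a colouring with a fixed vertex, whence Corollary~\ref{fixedcycle} gives a cycle $C$ with all edge-weights equal to $q$ (or $p-q$); I then show this extremal cycle weight differs from the weight some other valid colouring assigns to $C$, reducing back to the cycle condition. Care is needed to verify that a competing colouring $g$ with $W(C,g) \neq W(C,f)$ genuinely exists and is a valid $(p,q)$-colouring of all of $G$; the cleanest route is to take $g$ to be the shift $g(x) = (f(x)+1)\bmod p$ used earlier, or to directly construct a colouring of $C$ itself avoiding the extremal wind and extend it, using that $2 < \frac{p}{q} < 4$ guarantees enough slack. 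Once this verification is in place, every failure mode of Theorem~\ref{Path lemma} has been funnelled into a cycle-weight discrepancy, completing the forward direction.
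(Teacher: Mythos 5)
Your overall architecture matches the paper's: the reverse direction quotes Theorem~\ref{Path lemma}, and for the forward direction you split into the case where the cycle condition fails (done immediately) and the case where a fixed vertex exists, invoking Corollary~\ref{fixedcycle} to get a cycle $C$ with all edge weights $q$ (or all $p-q$), hence $W(C,f)=q|E(C)|$ (or $(p-q)|E(C)|$). Up to that point the argument is sound and is essentially the paper's.

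The gap is in your construction of the second colouring $g$. Your ``cleanest route,'' the shift $g(x)=(f(x)+1)\bmod p$, cannot work: shifting every colour by a constant preserves every edge weight, since $W(uv,g)=\bigl((f(v)+1)-(f(u)+1)\bigr)\bmod p = W(uv,f)$, so $W(C,g)=W(C,f)$ for \emph{every} cycle. The shift was used earlier in the paper only to violate the fixed-vertex condition (it changes the colour of the fixed vertex), which shows non-mixing but produces no cycle-weight discrepancy --- precisely the thing this lemma must manufacture. Your fallback, colouring $C$ with a different wind and ``extending'' to $G$, is also unjustified: a $(p,q)$-colouring of a cycle need not extend to the whole graph (this is a precolouring-extension problem that can fail), and ``arguing that not every colouring of $G$ can force this extremal weight on $C$'' is exactly the claim to be proved, so it is circular. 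The paper's device is the \emph{reflection} $g(v)=p-f(v)$ (reduced mod $p$): this is a valid $(p,q)$-colouring of all of $G$ since $|g(u)-g(v)|=|f(u)-f(v)|$ circularly, and it negates every edge weight, so each weight-$q$ edge of $C$ becomes weight $p-q$, giving $W(C,g)=(p-q)|E(C)|\neq q|E(C)|=W(C,f)$ because $\frac{p}{q}\neq 2$. Replacing your shift with this reflection closes the gap and completes the proof.
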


\begin{proof}

If there are two colourings $f$ and $g$ where $W(C,f) \neq W(C,g)$, 
then by Theorem~\ref{Path lemma} $f$ does not reconfigure to $g$ and thus 
$G$ is not $(p,q)$-mixing.

Now suppose $G$ is not $(p,q)$-mixing. Suppose $f$ and $f'$ are in different 
components of $\Col(G,G_{p,q})$. If neither $f$ or $f'$ has fixed vertices, 
then by Theorem~\ref{Path lemma} there is a cycle $C$ such that 
$W(C,f) \neq W(C,f')$. Therefore without loss of generality $f$ has a fixed vertex. 
By Corollary~\ref{fixedcycle} we can find a 
cycle $C = c_0, c_1, \dots c_{l-1}, c_0$ such that $W(c_i c_{i+1}, f) = q$ 
for all $i$ or $W(c_i c_{i+1}, f) = p-q$ for all $i$.  
Assume the former holds (the latter is analogous).  Thus, $W(C,f) = q|E(C)|$. Define a 
$(p,q)$-colouring $g$ by $g(v) = p-f(v)$ for all $v \in V(G)$. (If $f(v) = 0$, then $g(v) = p$
and we reduce $g(v) \bmod{p}$ so that $g(v) = 0$.)
This is a $(p,q)$-colouring, since for any edge $uv \in E(G)$,  
we have $|g(v) - g(u)| = |p- f(v) - p + f(u)| = |f(u) - f(v)|$. 
(In the case $f(v)=g(v)=0$, a similar analysis verifies $g(u)g(v)$ is an edge of $G_{p,q}$.)
In particular $W(c_i c_{i+1}, g) = p-q$ for all $i$ as $-q \bmod{p} = p-q$.
Therefore $W(C,g) = (p-q)|E(C)|$.  Since $\frac{p}{q} \neq 2$, $W(C,g) \neq W(C,f)$.
\end{proof}

We now state our characterization for $(p,q)$-mixing.

\begin{thm}
\label{badweight}
Fix $2 < \frac{p}{q} < 4$ and let $G$ be a graph. Then $G$ is not $(p,q)$-mixing if and only 
if there exists a $(p,q)$-colouring $f$ of $G$ and a cycle $C$ in $G$ where 
$W(C,f) \neq \frac{|E(C)|}{2}p$.
\end{thm}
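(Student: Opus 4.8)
The plan is to derive Theorem~\ref{badweight} directly from Lemma~\ref{weightsoncyclelemma}, exploiting the observation that $\frac{|E(C)|}{2}$ is exactly the value left fixed by the ``reflection'' symmetry $w \mapsto |E(C)| - w$ on winds. Throughout I would use two facts: for any cycle $C$ and any $(p,q)$-colouring $h$, the weight $W(C,h) = p\,w_h(C)$ is an integer multiple of $p$; and each edge $uv$ of a valid colouring satisfies $q \le W(uv,h) \le p-q$, so in particular $W(uv,h) \neq 0$.

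For the forward direction, I would assume $G$ is not $(p,q)$-mixing. By Lemma~\ref{weightsoncyclelemma} there is a cycle $C$ and colourings $f, g$ with $W(C,f) \neq W(C,g)$. Writing $W(C,f) = p\,w_f(C)$ and $W(C,g) = p\,w_g(C)$, the two are distinct integer multiples of $p$, so they cannot both equal $\frac{|E(C)|}{2}p$. Hence at least one of $f, g$ is a colouring with $W(C,\cdot) \neq \frac{|E(C)|}{2}p$, which is what the theorem requires.

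For the reverse direction, I would start from a $(p,q)$-colouring $f$ and a cycle $C$ with $W(C,f) \neq \frac{|E(C)|}{2}p$, and produce a second colouring by reflecting: set $g(v) = (p - f(v)) \bmod p$. Exactly as in the proof of Lemma~\ref{weightsoncyclelemma}, $g$ is again a $(p,q)$-colouring since $|g(u)-g(v)| = |f(u)-f(v)|$ on every edge. Because each edge weight $W(uv,f) \in [q, p-q]$ is nonzero, one checks edge by edge that $W(uv,g) = (g(v)-g(u)) \bmod p = (f(u)-f(v)) \bmod p = p - W(uv,f)$, and summing over $C$ gives
$$
W(C,g) = |E(C)|\,p - W(C,f).
$$
Thus $W(C,g) = W(C,f)$ would force $W(C,f) = \frac{|E(C)|}{2}p$, excluded by hypothesis; so $W(C,f) \neq W(C,g)$ and Lemma~\ref{weightsoncyclelemma} yields that $G$ is not $(p,q)$-mixing.

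The argument is short because the substantive work is already packaged in Lemma~\ref{weightsoncyclelemma} (and, through it, Corollary~\ref{fixedcycle}). The only points needing care are verifying that the reflected map $g$ is a genuine $(p,q)$-colouring — including the boundary case $f(v)=0$, handled by the reduction modulo $p$ — and the edge-wise identity $W(uv,g) = p - W(uv,f)$, where nonvanishing of edge weights is used. The conceptual crux, and the reason $\frac{|E(C)|}{2}$ is the distinguished ``no-wind'' value, is simply that it is the unique fixed point of $w \mapsto |E(C)| - w$: any colouring whose wind misses this value is carried by reflection to a colouring of strictly different wind, hence into a different component of $\Col(G,G_{p,q})$.
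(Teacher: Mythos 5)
Your proposal is correct and follows essentially the same route as the paper: the forward direction invokes Lemma~\ref{weightsoncyclelemma} and notes that two distinct weights cannot both equal $\frac{|E(C)|}{2}p$, while the reverse direction uses the reflected colouring $g(v) = (p - f(v)) \bmod p$ with the edge-wise identity $W(uv,g) = p - W(uv,f)$ to get $W(C,g) = |E(C)|p - W(C,f) \neq W(C,f)$. Your added care about the boundary case $f(v)=0$ and the nonvanishing of edge weights is exactly the checking the paper performs (via Observation~\ref{obs:negedge}), so there is nothing substantively different to flag.
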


\begin{proof}
If $G$ is not $(p,q)$-mixing, then by Lemma~\ref{weightsoncyclelemma} 
there exist $(p,q)$-colourings $f$ and $g$, and a cycle $C$ such that 
$W(C,f) \neq W(C,g)$. Then at most one of $W(C,f)$ or 
$W(C,g)$ equals $\frac{|E(C)|}{2}p$, so the result follows.

Conversely, assume that we have a $(p,q)$-colouring $f$ and cycle 
$C=c_0, c_1, \dots, c_l, c_0$ where $W(C,f) \neq \frac{|E(C)|}{2}p$. 
Consider the $(p,q)$-colouring $g$ where $g(v) = p -f(v)$ for all $v \in V(G)$
(with $g(v)=0$ when $f(v)=0$). Then $W(c_i c_{i+1},g) = p - W(c_i c_{i+1},f)$.
Thus, $W(C,g) = |E(C)|p - W(C,f)$.  
Since $W(C,f) \neq \frac{|E(C)|}{2}p$, we have 
$W(C,g) \neq W(C,f)$ and thus $G$ is not $(p,q)$-mixing.
\end{proof}

The following is immediate.
\begin{cor}
The $\Hmix{G_{p,q}}$ problem is in co-NP when $2 \leq \frac{p}{q} <4$. 
\end{cor}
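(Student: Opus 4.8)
The plan is to show that the complement problem---deciding whether $G$ is \emph{not} $(p,q)$-mixing---lies in NP, which is equivalent to placing $\Hmix{G_{p,q}}$ in co-NP. Since $p$ and $q$ are fixed parameters of the problem rather than part of the input, the sole input is the graph $G$, so any certificate whose size is polynomial in $|V(G)|$ and whose verification is polynomial in $|V(G)|$ will suffice.

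First I would treat the range $2 < \frac{p}{q} < 4$, where Theorem~\ref{badweight} supplies precisely the required short certificate. A NO-instance for mixing is witnessed by a pair $(f,C)$, where $f$ is a $(p,q)$-colouring of $G$ and $C$ is a cycle of $G$ with $W(C,f) \neq \frac{|E(C)|}{2}p$. Both objects have size $O(|V(G)|)$: the colouring is a function $V(G) \to \{0, \dots, p-1\}$, recorded in $O(|V(G)|\log p) = O(|V(G)|)$ bits since $p$ is constant, and $C$ is a list of at most $|V(G)|$ vertices.

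Next I would argue that verification runs in polynomial time. Confirming that $f$ is a valid $(p,q)$-colouring amounts to checking, for each edge $uv$, the circular inequality $q \leq |f(u)-f(v)| \leq p-q$; confirming that $C$ is a cycle is routine; and computing $W(C,f) = \sum_i W(c_i c_{i+1}, f)$ and comparing it with $\frac{|E(C)|}{2}p$ involves only $O(|E(C)|)$ arithmetic operations on integers bounded by $p$. By Theorem~\ref{badweight}, a pair with $W(C,f) \neq \frac{|E(C)|}{2}p$ exists if and only if $G$ is not $(p,q)$-mixing, which is exactly what is needed for membership in NP.

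Finally I would dispose of the boundary case $\frac{p}{q} = 2$ separately, since Theorem~\ref{badweight} is stated only under the strict inequality. Here $G$ is $(p,q)$-mixing if and only if $G$ has no edges, so $G$ is \emph{not} $(p,q)$-mixing if and only if $E(G) \neq \emptyset$; this is decidable in polynomial time and hence trivially in co-NP. I do not anticipate a genuine obstacle, as the entire substance was already established in Theorem~\ref{badweight}; what remains is only the routine observation that the witnessing colouring-and-cycle pair constitutes a polynomially sized, polynomially checkable NO-certificate.
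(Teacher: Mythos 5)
Your proposal is correct and follows exactly the paper's reasoning: the corollary is stated as immediate from Theorem~\ref{badweight}, with the pair consisting of a $(p,q)$-colouring and a wrapped cycle serving as the concise, polynomially verifiable NO-certificate, and the boundary case $\frac{p}{q}=2$ covered by the paper's earlier remark that $G$ is $(p,q)$-mixing if and only if $G$ has no edges. Your write-up merely makes explicit the certificate-size and verification details that the paper leaves implicit.
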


It follows from Theorem~\ref{badweight} that if a graph $G$ is $(p,q)$-mixing, then $G$ is bipartite. This observation had already been made in~\cite{BrewsterNoel}. In fact, they proved a stronger statement which we need some definitions to state. Define the \emph{circular mixing number} as $$m_{c}(G) = \inf\{\frac{p}{q} \ | \ G \text{ is } G_{p,q}\operatorname{-\text{mixing}}\}.$$ Let $\omega(G)$ be the \emph{clique number} of $G$, that is the number of vertices in the largest clique of $G$. 

\begin{thm}[\cite{BrewsterNoel}]
\label{bound}
Let $G$ be a non-bipartite graph. Then $m_{c}(G) \geq \max\{4, \omega(G) +1\}$. 
\end{thm}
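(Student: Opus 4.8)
The plan is to establish the two lower bounds $m_c(G)\ge 4$ and $m_c(G)\ge\omega(G)+1$ separately. Since $m_c(G)$ is an infimum, for each it suffices to show that $G$ fails to be $(p,q)$-mixing for every $\frac{p}{q}$ below the claimed threshold at which $G$ is actually $(p,q)$-colourable; a value $\frac{p}{q}$ for which $G$ has no $(p,q)$-colouring cannot contribute a member to the set over which the infimum is taken.

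First I would show $m_c(G)\ge 4$. As $G$ is non-bipartite it contains an odd cycle $C$. For any $(p,q)$-colouring $f$, the wind $w_f(C)$ is by definition an integer, so $W(C,f)=p\,w_f(C)$ is an integer multiple of $p$, whereas $\frac{|E(C)|}{2}p$ is not, since $|E(C)|$ is odd. Hence $W(C,f)\ne\frac{|E(C)|}{2}p$ for every $f$, and Theorem~\ref{badweight} gives that $G$ is not $(p,q)$-mixing for $2<\frac{p}{q}<4$. The case $\frac{p}{q}=2$ is immediate ($G$ has an edge, so it is not $2$-mixing), and for $\frac{p}{q}<2$ the graph $G$ is not $(p,q)$-colourable. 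Thus no $\frac{p}{q}<4$ lies in the defining set, giving $m_c(G)\ge 4$.

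Second, and this is the crux, I would show $m_c(G)\ge\omega(G)+1$. Write $\omega=\omega(G)$; we may assume $\omega\ge 3$, as otherwise $\omega+1\le 4$ and the bound follows from the first part. Fix an $\omega$-clique $K$, assume $\frac{p}{q}<\omega+1$, and assume $G$ is $(p,q)$-colourable. The key claim is that the cyclic order in which the vertices of $K$ appear on the circle $\mathbb{Z}_p$ is invariant under reconfiguration. Recolouring a vertex outside $K$ leaves the positions of $K$ unchanged. Recolouring a clique vertex $v$, coloured $a_i$ where $K$ occupies positions $a_0,\dots,a_{\omega-1}$ in cyclic order with gaps $g_0,\dots,g_{\omega-1}$ summing to $p$ (each $\ge q$), requires the new colour to lie at circular distance $\ge q$ from every other clique vertex. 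After deleting $a_i$ the two incident gaps merge into $g_{i-1}+g_i\ge 2q$; a short counting argument from $\sum_j g_j=p<(\omega+1)q$ shows every remaining gap is strictly less than $2q$, so $v$ can only be reassigned inside its own slot bounded by $a_{i-1}$ and $a_{i+1}$, preserving the cyclic order of $K$. Granting this, the cyclic order of $K$ is constant on each connected component of $\Col(G,G_{p,q})$. I would then take any $(p,q)$-colouring $f$ and set $g(v)=(p-f(v))\bmod p$, which is again a $(p,q)$-colouring (the reflection already used in the proof of Lemma~\ref{weightsoncyclelemma}). Reflection reverses the cyclic order of the $\omega$ distinct clique colours, and for $\omega\ge 3$ a cyclic order differs from its reverse, since the successor map of such an order is an $\omega$-cycle and hence does not equal its inverse. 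Therefore $f$ and $g$ sit in distinct components, so $G$ is not $(p,q)$-mixing, which yields $m_c(G)\ge\omega+1$. Combining the two bounds gives $m_c(G)\ge\max\{4,\omega(G)+1\}$.

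I expect the main obstacle to be the cyclic-order invariance lemma, and within it the gap-counting step verifying that when $\frac{p}{q}<\omega+1$ no gap other than the merged one can be as large as $2q$, so a displaced clique vertex is confined to its slot. The remaining ingredients are bookkeeping, with the only real care needed being the confirmation that the reflection genuinely changes the cyclic order, which is precisely where the hypothesis $\omega\ge 3$ enters.
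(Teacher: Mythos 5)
Your proof is correct, but note that there is no in-paper proof to compare it with: Theorem~\ref{bound} is quoted from~\cite{BrewsterNoel} and used as a black box here, so your argument is best judged as a self-contained derivation, and as such it holds up. The two halves play different roles in an instructive way. For $m_c(G)\geq 4$ you lean on this paper's machinery: any odd cycle $C$ has $W(C,f)=p\,w_f(C)$, which can never equal the half-integer multiple $\frac{|E(C)|}{2}p$, so Theorem~\ref{badweight} kills mixing for all colourable $2<\frac{p}{q}<4$ --- this is exactly the remark the paper itself makes right after Theorem~\ref{badweight}. For $m_c(G)\geq\omega(G)+1$ you correctly avoid that machinery, and this is essential: when $\omega(G)+1>4$ you must rule out mixing in the range $4\leq\frac{p}{q}<\omega(G)+1$, where Theorems~\ref{Path lemma} and~\ref{badweight} simply do not apply. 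Your frozen-cyclic-order argument is sound there: with $\omega$ gaps each at least $q$ summing to $p<(\omega+1)q$, a second gap of size $\geq 2q$ alongside the merged gap would force $p\geq 2q+2q+(\omega-3)q=(\omega+1)q$, so a recoloured clique vertex stays in its slot, the cyclic order of the clique is a component invariant of $\Col(G,G_{p,q})$, and the reflection $v\mapsto (p-f(v))\bmod p$ reverses it, which for $\omega\geq 3$ yields two colourings in distinct components (an $\omega$-cycle equals its inverse only for $\omega\leq 2$). Two quibbles, neither a gap: first, your treatment of values $\frac{p}{q}$ with no $(p,q)$-colouring rests on the convention that such values are excluded from the infimum (the paper's definition of $m_c$ leaves the empty $\Col(G,G_{p,q})$ case ambiguous), and you are right that this is the only reading under which the theorem is meaningful; second, at $\frac{p}{q}=2$ a non-bipartite $G$ admits no $(p,q)$-colouring at all, since $G_{2q,q}$ is a perfect matching, so your remark that ``$G$ has an edge, so it is not $2$-mixing'' invokes the wrong reason --- that criterion is for bipartite graphs --- though the value is excluded either way.
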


As we are interested in $(p,q)$-mixing when $2 < \frac{p}{q} < 4$, we will restrict our attention to bipartite graphs from now on.

Let $f$ be a $(p,q)$-colouring of $G$. For any cycle $C$ of $G$, if $W(C,f) \neq \frac{|E(C)|}{2}p$, then we say that $C$ is \textit{wrapped under $f$}. To explain this choice of terminology, consider a $C = c_{0},c_{1},\ldots,c_{2l-1}, c_0$ and a colouring $f$ where $W(C,f) \neq \frac{|E(C)|}{2}p$.  The colouring $g$ defined by $g(c_i)=0$ for $i$ even and $g(c_i)=q$ for $i$ odd has $g(C) = 0q$, a single edge, and $W(C,g) = \frac{|E(C)|}{2}p$.  Hence $f$ does not reconfigure to $g$.  Consequently, any colouring to which $f$ reconfigures must have a cycle (of $G_{p,q}$) in its image (of $C$), i.e. any colouring to which $f$ reconfigures must wrap $C$ around a cycle of $G_{p,q}$.  Alternatively, a colouring of a cycle can be reconfigured to a colouring where the image is a single edge if and only if the cycle is not wrapped.  Thus, even cycles may or may not be wrapped but odd cycles are always wrapped.

We complete this section with a final winding number result. 
The \emph{sum of two cycles $C_1$ and $C_2$} is the graph induced by the edges of the 
symmetric difference $E(C_1) \triangle E(C_2)$. Given a wrapped cycle
that is the sum of two other cycles, then one of the two cycles is also wrapped. 
Consequently, given a wrapped cycle, we can find a chordless cycle that is wrapped.

\begin{lem}
\label{chordlesscycle}
Let $G$ be a graph and $f$ a $(p,q)$-colouring of $G$. 
Let $C=c_0, c_1, \dots, c_l, c_0$ be a cycle which is wrapped under $f$. 
Let $P=p_0, p_1, \dots, p_k$ be a path whose endpoints 
lie on $C$, i.e. $p_0 = c_s$ and $p_k = c_t$, and whose internal vertices do not lie on $C$. 
Then either $C'=c_s, c_{s+1}, \dots, c_{t-1}, c_{t}, p_{k-1}, \dots, p_1, c_s$ or 
$C''=c_s, p_1, \dots, p_{k-1}, c_t, c_{t+1}, \dots, c_{s-1}, c_s$ is wrapped under $f$.
\end{lem}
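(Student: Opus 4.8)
The plan is to reduce the statement to the additivity of a single ``wrapping defect'' under the decomposition of $C$ into $C'$ and $C''$. For a cycle $D$ of $G$ write $\delta(D) = W(D,f) - \frac{|E(D)|}{2}p$, so that $D$ is wrapped under $f$ exactly when $\delta(D) \neq 0$. The goal is then to prove the identity $\delta(C) = \delta(C') + \delta(C'')$: since $C$ is wrapped we have $\delta(C) \neq 0$, and hence at least one of $\delta(C')$, $\delta(C'')$ is nonzero, which is precisely the conclusion.

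First I would set up the combinatorial picture. Let $A = c_s, c_{s+1}, \dots, c_t$ and $B = c_t, c_{t+1}, \dots, c_{s-1}, c_s$ be the two arcs into which $c_s$ and $c_t$ split $C$, so that $C$ is $A$ followed by $B$, the two arcs are edge-disjoint, and $E(A) \cup E(B) = E(C)$. By inspection of the vertex sequences, $C'$ traverses $A$ and then $P$ from $p_k = c_t$ back to $p_0 = c_s$, while $C''$ traverses $P$ from $p_0 = c_s$ to $p_k = c_t$ and then $B$. Consequently $E(C') \triangle E(C'') = E(C)$ (the edges of $P$ cancel), the arc $A$ appears only in $C'$ and $B$ only in $C''$, each with the same orientation it has in $C$, and every edge of $P$ appears once in $C'$ and once in $C''$ but in opposite orientations. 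Since $P$ is a path its endpoints are distinct, so $c_s \neq c_t$ and both $C'$ and $C''$ are genuine cycles.

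The heart of the argument is the weight identity $W(C',f) + W(C'',f) = W(C,f) + |E(P)|\,p$. Summing weights edge by edge, the contributions of $A$ and $B$ reassemble into $W(C,f)$, since they carry the orientations they have in $C$; meanwhile each of the $|E(P)|$ edges of $P$ is counted once in each direction, and by Observation~\ref{obs:negedge}(a) the two directed weights of a single edge sum to $p$, contributing $|E(P)|\,p$ in total. (Here I use that the wrapping condition $W(D,f) = \frac{|E(D)|}{2}p$ is insensitive to the direction in which a cycle is traversed, again by Observation~\ref{obs:negedge}(a), so that I may compute all three weights with a consistent orientation.) Combining this weight identity with the edge-count identity $|E(C')| + |E(C'')| = |E(C)| + 2|E(P)|$, the two $|E(P)|\,p$ terms cancel and $\delta(C') + \delta(C'') = \delta(C)$ follows.

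I expect the only delicate point to be the orientation bookkeeping: one must check carefully that the edges of $P$ genuinely occur with opposite orientations in $C'$ and $C''$ (so their weights pair to $p$), and that $A$ and $B$ retain their orientations from $C$ (so their weights reassemble to $W(C,f)$ and not to some reversed quantity). Once the direction-invariance of the wrapping test is recorded this is routine, and no case analysis is needed. This lemma then supplies the inductive step behind the concluding remark that every wrapped cycle contains a chordless wrapped cycle: applying it to a chord or connecting path and passing to whichever of $C'$, $C''$ is wrapped strictly shortens the cycle, so iterating terminates at a chordless wrapped cycle.
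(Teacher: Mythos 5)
Your proof is correct and is essentially the paper's own argument: the key identity $W(C',f)+W(C'',f)=W(C,f)+|E(P)|\,p$ obtained by pairing the oppositely oriented traversals of $P$ via Observation~\ref{obs:negedge}, combined with the edge count $|E(C')|+|E(C'')|=|E(C)|+2|E(P)|$, is exactly the paper's computation, and your defect $\delta$ just repackages its contradiction step (``if neither is wrapped, then $W(C,f)=lp/2$'') as additivity of $\delta$. No gaps to report.
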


\begin{proof}
Observe $W(C',f) + W(C'',f) = W(C,f) + \sum_{i=1}^k (W(p_{i-1}p_i,f) + W(p_{i}p_{i-1},f)) = W(C,f)+ kp$. The reduction of the summation follows from Observation~\ref{obs:negedge}. Since $|E(C')| = t-s+k$ and $|E(C'')|=l-t+s+k$, if neither is wrapped, then $W(C',f) + W(C'',f) = (t-s+k)p/2 + (l-t+s+k)p/2 = lp/2 + kp$. This implies $W(C,f)= lp/2$, i.e. $C$ is not wrapped.  The result follows.
\end{proof}

\section{Mixing and the folding operation}
\label{foldingsection}
In this section we prove Theorem \ref{foldingcharacterizationintro}. For ease, we recall the definition of folding. Given a graph $G$, and two vertices $x$ and $y$ at distance 2, let $G_{xy}$ be the graph obtained by identifying $x$ and $y$ and call the new vertex $v_{xy}$.  The homomorphism $f: G \to G_{xy}$ defined by $f(x) = f(y) = v_{xy}$ and $f(u)=u$ for $u \not\in \{x,y\}$ is an \emph{elementary fold}. We say a graph $G$ \emph{folds} to a graph $H$ if there is a homomorphism $f: G \to H$ where $f$ is a composition of elementary folds and $f(G)$ is isomorphic to $H$. We call such a mapping a \emph{folding}. For brevity, we may refer to an elementary fold as just a fold.

\subsection{$\mathbf{(p,q)}$-mixing is closed under folding}

\begin{lem}
\label{foldsto}
Let $H$ be a graph which is not $(p,q)$-mixing where $2 < \frac{p}{q} <4$. If $G$ folds to $H$, 
then $G$ is not $(p,q)$-mixing. 
\end{lem}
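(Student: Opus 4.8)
The plan is to reduce to a single elementary fold and then transport a wrapped cycle from $H$ back to $G$ via the characterization in Theorem~\ref{badweight}. Since a folding $\phi\colon G \to H$ is by definition a composition of elementary folds $G = G_0 \to G_1 \to \cdots \to G_m = H$, it suffices to prove the claim for one elementary fold: if $G_{i+1}$ is not $(p,q)$-mixing, then neither is $G_i$. Granting this, a finite downward induction starting from the hypothesis that $H = G_m$ is not mixing shows successively that $G_{m-1}, \dots, G_0 = G$ are not mixing. So I would fix an elementary fold $\phi\colon G \to H = G_{xy}$ identifying two vertices $x,y$ at distance $2$ into $v_{xy}$, and assume $H$ is not $(p,q)$-mixing.

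By Theorem~\ref{badweight} there is a $(p,q)$-colouring $h$ of $H$ and a cycle $C$ of $H$ that is wrapped, i.e.\ $W(C,h) \neq \tfrac{|E(C)|}{2}p$. I would pull $h$ back along $\phi$ by setting $f = h \circ \phi$. Since $\phi$ is a homomorphism that sends no edge to a loop ($x,y$ are nonadjacent) and $h\colon H \to G_{p,q}$, the map $f$ is a $(p,q)$-colouring of $G$. Two features drive the rest of the argument: first, $f(x) = f(y) = h(v_{xy})$; second, weights transport verbatim, $W(uv,f) = W(\phi(u)\phi(v),h)$ for every edge $uv$ of $G$, and hence along every walk.

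Next I would lift $C$ to a closed walk $C'$ in $G$. As a simple cycle, $C$ meets $v_{xy}$ at most once, where it is incident to two cycle-edges $v_{xy}a$ and $v_{xy}b$ with $a,b \in N_G(x)\cup N_G(y)$. If $C$ avoids $v_{xy}$, or if $a$ and $b$ can both be realized as neighbours of a single one of $x,y$, then replacing $v_{xy}$ by that vertex turns $C$ into a genuine cycle $C'$ of $G$ with $|E(C')| = |E(C)|$ and, by weight preservation, $W(C',f) = W(C,h)$; thus $C'$ is wrapped and Theorem~\ref{badweight} finishes this case. The only remaining possibility is that one of the two edges forces $x$ and the other forces $y$; then the lift is a path from $x$ to $y$ of weight $W(C,h)$, which I would close through a common neighbour $z$ of $x,y$ (one exists as $x,y$ are at distance $2$) using the length-$2$ path $y,z,x$. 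Because $f(x) = f(y)$, Observation~\ref{obs:negedge}(b) applied to $y,z,x$ gives $W(yz,f)+W(zx,f) = p$, so the resulting closed walk $C'$ has $W(C',f) = W(C,h)+p$ and length $|E(C)|+2$; comparing with $\tfrac{|E(C)|}{2}p$ yields $W(C',f) \neq \tfrac{|E(C')|}{2}p$, so $C'$ is an unbalanced closed walk.

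Finally, $C'$ need not be a simple cycle (the lift or the vertex $z$ may repeat a vertex), so I would invoke the standard decomposition of a closed walk into simple cycles together with degenerate back-and-forth steps $u,v,u$; each such step has weight $W(uv,f)+W(vu,f) = p$ over length $2$ by Observation~\ref{obs:negedge}(a), hence is balanced. As weight and length are both additive over this decomposition, an unbalanced $C'$ must contain a simple cycle $D$ with $W(D,f) \neq \tfrac{|E(D)|}{2}p$. Then $D$ is wrapped under $f$, and Theorem~\ref{badweight} gives that $G$ is not $(p,q)$-mixing. The main obstacle is exactly this last bookkeeping: guaranteeing that the object fed into Theorem~\ref{badweight} is a bona fide simple cycle, which is why the closed-walk-to-cycle reduction (the same mechanism underlying Lemma~\ref{chordlesscycle}) is needed rather than a direct appeal to $C'$.
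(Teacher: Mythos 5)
Your proposal is correct, and its skeleton is the paper's proof almost step for step: reduce to a single elementary fold, pull the colouring back as $\psi\circ f$, lift the wrapped cycle, and when the lift breaks at $v_{xy}$, close it through a common neighbour of $x$ and $y$, using Observation~\ref{obs:negedge} to show the closing $2$-path adds weight exactly $p$ while the length grows by $2$, so wrappedness is preserved. The one genuine divergence is how each argument guarantees that the object fed back into Theorem~\ref{badweight} is a simple cycle. The paper applies Lemma~\ref{chordlesscycle} \emph{before} lifting, replacing $C$ by an induced wrapped cycle of $G_{xy}$; chordlessness then forces the common neighbour $a$ off $C$ (any occurrence of $a$ on $C$ other than at the two neighbours of $v_{xy}$ would yield the chord $av_{xy}$, and those two positions are ruled out by the earlier case), so the lifted $C'$ is a bona fide cycle of $G$ and no further cleanup is needed. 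You instead lift first, accept a possibly non-simple closed walk, and extract a wrapped simple cycle by decomposing the walk into simple cycles plus degenerate steps $u,v,u$, each degenerate step being balanced (weight $p$ over length $2$) by Observation~\ref{obs:negedge}(a); since weight and length are additive and the total is unbalanced, some simple cycle in the decomposition is wrapped. This is sound, provided you note the orientation bookkeeping: orienting each traversed edge by the direction of the walk gives a multiset of arcs with indegree equal to outdegree at every vertex, which partitions into simple directed cycles, the $2$-cycles being exactly your back-and-forth steps. Your variant buys a self-contained closed-walk argument that would also tolerate sloppier lifts; the paper's variant keeps everything at the level of simple cycles and reuses Lemma~\ref{chordlesscycle}, which it needs elsewhere (e.g.\ in Section~\ref{planarsection}) anyway. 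One minor remark: in your mixed case the lift is automatically a simple path, since the internal vertices of $C$ are fixed by the fold and are distinct from $x$ and $y$; only the common neighbour $z$ can cause a repetition, so your hedge about ``the lift or the vertex $z$'' is half unnecessary, though the decomposition handles it regardless.
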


\begin{proof}
It suffices to prove the statement for a single elementary fold.  Suppose $f: G \to G_{xy}$ 
is an elementary fold and $G_{xy}$ is not $(p,q)$-mixing.  Then by Theorem~\ref{badweight} 
there is a $(p,q)$-colouring of $G_{xy}$, $\psi$, and a cycle $C = v_{0},v_{1},\ldots,v_{t},v_{0}$ 
such that $C$ is wrapped under $\psi$.  The homomorphism $\psi' = \psi \circ f$ 
is a $(p,q)$-colouring of $G$.  We will show there is a cycle $C'$ in $G$ that is 
wrapped under $\psi'$.

Let $v_{xy}$ be the vertex formed by the identification of $x$ and $y$ under $f$.  
If $v_{xy} \not\in V(C)$, then $C$ is a cycle of $G$ and is wrapped under $\psi'$ as 
$\psi = \psi'$ on $C$.  

Thus, assume without loss of generality $v_{xy} = v_1$.  In $G$, if either $v_0, v_2 \in N(x)$ 
or $v_0, v_2 \in N(y)$, then with a slight abuse of notation $C$ belongs to $G$ and is wrapped 
under $\psi'$.  Otherwise, consider a common neighbour of $x$ and $y$, say $a$.  
By Lemma~\ref{chordlesscycle}, we may assume $C$ is an induced cycle of
$G_{xy}$ which implies (without loss of generality) the cycle 
$C': v_0, x, a, y, v_2, v_3, \dots, v_t, v_0$ is a cycle of $G$. Since 
$\psi'(x) = \psi'(y) = \psi(v_{xy})$ we have by Observation~\ref{obs:negedge},

\[W(xa,\psi') + W(ay,\psi') = (\psi'(a) - \psi'(x)) \bmod p + (\psi'(y) - \psi'(a)) \bmod p = p.\]
Hence,
\begin{align*}
 W(C',\psi') &= W(C,\psi)+ W(xa,\psi')+W(ay,\psi') \\
 & = W(C,\psi) + p \\
 & \neq \frac{|E(C)|}{2}p + p \\
 & = \frac{|E(C')|}{2}p.
\end{align*}
The result follows.
\end{proof}

\subsection{$\mathbf{C_{2k+1}}$-Mixing and Folds}
In this subsection, we show that for bipartite graphs $G$, 
$G$ is $C_{2k+1}$-mixing if and only if $G$ 
does not fold to $C_{4k+2}$.  First we characterize which cycles are $C_{2k+1}$-mixing.

\begin{obs}[\cite{BrewsterNoel}]
\label{4cyclelemma}
The cycle $C_{4}$ is $(p,q)$-mixing for all $\frac{p}{q} >2$. 
\end{obs}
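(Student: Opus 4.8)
The plan is to give a direct reconfiguration argument valid for the entire range $\frac{p}{q} > 2$, and to note that in the sub-range $2 < \frac{p}{q} < 4$ relevant to the rest of the paper a one-line proof follows from Theorem~\ref{badweight}. Write $C_4 = a,b,c,d$ in cyclic order, so that $\{a,c\}$ and $\{b,d\}$ are the two non-adjacent pairs. The heart of the argument is the observation that in $C_4$ a vertex and its ``opposite'' share both neighbours: since $c$ is adjacent to $b$ and $d$, the colour $f(c)$ is compatible with $f(b)$ and $f(d)$, and hence is a legal colour for $a$. Thus whenever $f(a) \neq f(c)$, a single reconfiguration step recolours $a$ to $f(c)$. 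After this step $a$ and $c$ agree, so $b$ and $d$ now see both neighbours on the same colour; recolouring $d$ to $f(b)$ (again legal, since $f(b)$ is compatible with $f(c)$) collapses $f$ to a single-edge colouring of the form $(\gamma,\mu,\gamma,\mu)$ with $\gamma\mu \in E(G_{p,q})$.

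First I would carry out this collapse to reduce every $(p,q)$-colouring of $C_4$ to a single-edge colouring, and then show that all single-edge colourings are mutually reconfigurable. The single-edge colourings $(\gamma,\mu,\gamma,\mu)$ are in bijection with the homomorphisms $K_2 \to G_{p,q}$, and the two basic moves available to us---recolour $\{a,c\}$ to any $\gamma'$ compatible with $\mu$ (two steps, through a valid intermediate colouring $(\gamma',\mu,\gamma,\mu)$), or recolour $\{b,d\}$ to any $\mu'$ compatible with $\gamma$---are exactly the reconfiguration steps of $\Col(K_2, G_{p,q})$. Hence it suffices to prove $\Col(K_2, G_{p,q})$ is connected. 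This holds because $G_{p,q}$ is connected and non-bipartite whenever $\frac{p}{q} > 2$: connectivity lets us walk an edge along any path to reach any undirected edge, while an odd closed walk lets us reverse the orientation of the token, so every homomorphism $K_2 \to G_{p,q}$ reaches every other.

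For the range $2 < \frac{p}{q} < 4$ a shorter proof is available, and I would include it as the natural route given the machinery just developed: the only cycle of $C_4$ is $C_4$ itself, so by Theorem~\ref{badweight} it suffices to check that $W(C_4,f) = 2p$ for every $(p,q)$-colouring $f$. Each of the four edge weights lies in $[q, p-q]$, so $4q \le W(C_4,f) \le 4(p-q)$; since $\frac{p}{q} < 4$ gives $4q > p$ we get $W(C_4,f) > p$, and since $4(p-q) < 3p$ we get $W(C_4,f) < 3p$. As $W(C_4,f)$ is a multiple of $p$, it must equal $2p$, so no cycle of $C_4$ is wrapped and $C_4$ is $(p,q)$-mixing.

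I expect the main obstacle to be the final connectivity step for $\frac{p}{q} \ge 4$, where wind is no longer an invariant and Theorem~\ref{badweight} does not apply: there one must genuinely verify that the single-edge colourings form a single reconfiguration class, which reduces to the connectivity and non-bipartiteness of $G_{p,q}$. The collapse to a single-edge colouring is routine once the ``opposite vertices share neighbours'' observation is in hand.
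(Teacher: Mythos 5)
Your proposal is correct, but note that the paper contains no proof of this observation at all: it is imported by citation from~\cite{BrewsterNoel}, so your argument is necessarily a different (self-contained) route. It checks out. The collapse is sound: in $C_4 = a,b,c,d$ the opposite vertices $a,c$ share both neighbours, so recolouring $a$ to $f(c)$ and then $d$ to $f(b)$ are legal single steps, reducing any colouring to the form $(\gamma,\mu,\gamma,\mu)$; and simulating each move of $\Col(K_2,G_{p,q})$ by two moves in $\Col(C_4,G_{p,q})$ (through the valid intermediate $(\gamma',\mu,\gamma,\mu)$) correctly reduces the problem to connectivity of $\Col(K_2,G_{p,q})$. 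The edge-sliding argument there is the standard one and works: an ordered edge can be walked along any walk, and an odd closed walk reverses its orientation, so connectivity plus non-bipartiteness of $G_{p,q}$ suffices. The one step you leave as an assertion deserves a line: for integers with $\frac{p}{q}>2$ one has $p \geq 2q+1$, so the circulant connection set of $G_{p,q}$ contains the coprime distances $q$ and $q+1$, giving connectivity, and $G_{p,q}$ is non-bipartite since $\chi_c(G_{p,q}) = \frac{p}{q} > 2$ while bipartite graphs have circular chromatic number $2$. What your approach buys is the full range $\frac{p}{q} > 2$ as stated, including $\frac{p}{q} \geq 4$ where wind is not a reconfiguration invariant and Theorem~\ref{badweight} is unavailable; your supplementary wind computation ($4q \leq W(C_4,f) \leq 4(p-q)$, hence $p < W(C_4,f) < 3p$, hence $W(C_4,f)=2p$) is a clean one-liner in the spirit of the paper's machinery, but, as you correctly flag, it covers only $2 < \frac{p}{q} < 4$ and could not stand alone as a proof of the observation as stated.
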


\begin{prop}[\cite{BrewsterNoel}]
\label{cycleMix}
For $r \geq 3$, the even cycle $C_{2r}$ is $C_{2k+1}$-mixing for all $r \leq 2k$.
\end{prop}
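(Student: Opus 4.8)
The plan is to invoke the characterization of Theorem~\ref{badweight} directly. Since $C_{2r}$ is itself a cycle and contains no other cycle, $C_{2r}$ is $C_{2k+1}$-mixing if and only if $C_{2r}$ is not wrapped under any $(2k+1,k)$-colouring; that is, if and only if every $(2k+1,k)$-colouring $f$ of $C_{2r}$ satisfies $W(C_{2r},f) = \frac{2r}{2}(2k+1) = r(2k+1)$. Here $p = 2k+1$ and $q = k$, so $\frac{p}{q} = 2 + \frac{1}{k} \in (2,4)$ and the hypotheses of Theorem~\ref{badweight} apply. Thus the whole proposition reduces to computing the possible winds of $C_{2r}$.

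First I would record that for $p = 2k+1$ and $q = k$ the admissible directed edge weights are highly restricted. For any edge $uv$ of a $(p,q)$-coloured graph we have $W(uv,f) \in \{q, q+1, \dots, p-q\}$, which here is exactly $\{k, k+1\}$. Writing $b$ for the number of the $2r$ directed edges of the cycle having weight $k+1$ (and $2r - b$ having weight $k$), we obtain $W(C_{2r},f) = 2rk + b$ with $0 \le b \le 2r$.

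Next I would use the fact, recalled in Section~\ref{sec:defn}, that the weight of a cycle telescopes to a multiple of $p$, so $W(C_{2r},f) \equiv 0 \pmod{2k+1}$. Since $2rk = r(2k+1) - r \equiv -r \pmod{2k+1}$, this forces $b \equiv r \pmod{2k+1}$. The key step, and the only place the hypothesis $r \le 2k$ is used, is to show that $b = r$ is the unique value in $\{0, 1, \dots, 2r\}$ congruent to $r$ modulo $2k+1$: the candidate $r + (2k+1)$ exceeds $2r$ precisely because $r \le 2k < 2k+1$, and the candidate $r - (2k+1)$ is negative for the same reason. Hence $b = r$, giving $W(C_{2r},f) = 2rk + r = r(2k+1) = \frac{|E(C_{2r})|}{2}p$, so $C_{2r}$ is never wrapped and is therefore $C_{2k+1}$-mixing.

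I expect the main (indeed the only) obstacle to be this short counting argument in the final step, where the two inequalities $r + (2k+1) > 2r$ and $r - (2k+1) < 0$ must both be seen to follow from $r \le 2k$; this is exactly the arithmetic that explains why $r \le 2k$ is the correct threshold. The hypothesis $r \ge 3$ plays no role in the estimate and serves only to exclude the small cases, with $C_4$ (the case $r = 2$) already handled by Observation~\ref{4cyclelemma}.
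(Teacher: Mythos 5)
Your proof is correct, but note that the paper itself contains no proof of Proposition~\ref{cycleMix}: it is imported verbatim from \cite{BrewsterNoel}, where it was established before the characterization machinery of this paper existed (by a direct reconfiguration argument on cycle colourings). What you have done instead is derive it from the paper's own Theorem~\ref{badweight}, and this is legitimate and non-circular: Theorem~\ref{badweight} rests on Lemma~\ref{weightsoncyclelemma}, Theorem~\ref{Path lemma} and Corollary~\ref{fixedcycle}, the latter two imported from \cite{circularReconfig}, and nowhere uses Proposition~\ref{cycleMix}, which first enters only in Observation~\ref{cyclesmixing}. Your details all check out: for $p=2k+1$, $q=k$ the constraint $q \le |f(u)-f(v)| \le p-q$ forces $W(uv,f) \in \{k,k+1\}$ (reflecting that $G_{2k+1,k} \cong C_{2k+1}$ is $2$-regular), telescoping gives $W(C_{2r},f) = 2rk + b \equiv 0 \pmod{2k+1}$, hence $b \equiv r \pmod{2k+1}$, and $r \le 2k$ pins $b=r$ as the unique candidate in $\{0,\dots,2r\}$, so every colouring has wind $r(2k+1) = \frac{|E(C_{2r})|}{2}p$ and $C_{2r}$ is never wrapped. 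Your tightness remark is consistent with the paper: at $r = 2k+1$ the congruence also admits $b \in \{0, 4k+2\}$, and the colouring exhibited in the proof of Observation~\ref{cyclesmixing} realizes exactly the $b=0$ option, certifying that $C_{4k+2}$ is not mixing. You are likewise right that $r \ge 3$ is not needed by your argument, which covers $C_4$ as well; that hypothesis merely mirrors the division of labour in \cite{BrewsterNoel} between Observation~\ref{4cyclelemma} and this proposition. What your route buys is a short, self-contained proof unifying all the cycle facts of Section~\ref{foldingsection} under Theorem~\ref{badweight}; its only cost is the (here harmless) dependence on that theorem, which the original citation avoids.
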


\begin{obs}
\label{cyclefolds}
The even (respectively odd) cycle $C_{2k}$ ($C_{2k+1}$) folds to all even (odd) cycles 
$C_{2k'}$ ($C_{2k'+1}$) for $2 \leq k' \leq k$.
\end{obs}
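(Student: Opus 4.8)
The plan is to reduce everything to a single building block: for $n \geq 5$, the cycle $C_n$ folds to $C_{n-2}$ using exactly two elementary folds. Since a composition of foldings is again a folding (a composition of elementary folds whose final image is the desired graph), iterating this block $k-k'$ times folds $C_{2k}$ to $C_{2k'}$ and folds $C_{2k+1}$ to $C_{2k'+1}$; the case $k'=k$ is the identity and needs no folds at all.

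To build the block, I would label $V(C_n) = \{0,1,\dots,n-1\}$ with edges joining consecutive indices modulo $n$, and assume $n \ge 5$ so that $d(0,2) = 2$. First fold the pair $0,2$ (at distance $2$ via the vertex $1$) to a new vertex $v$, and then record the resulting structure: the edges $01$ and $12$ both become the edge $v1$, so $v$ is adjacent to $1$, to $3$ (from the edge $23$), and to $n-1$ (from the edge $(n-1)0$), while the vertex $1$ has lost both of its old neighbours to $v$ and is now a pendant hanging off $v$. Thus the image is a copy of $C_{n-2}$ on $v,3,4,\dots,n-1$ together with this pendant. A second elementary fold then identifies the pendant $1$ with $3$ — again at distance $2$, now via $v$ — to a new vertex $w$; this makes $w$ adjacent to $v$ and to $4$ and eliminates the pendant, leaving exactly the cycle $v,w,4,5,\dots,n-1,v$, which is $C_{n-2}$. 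I would then check that throughout the iteration every intermediate source cycle has length at least $5$ (the sources range over $C_{2k},\dots,C_{2k'+2}$ and over $C_{2k+1},\dots,C_{2k'+3}$, all of length at least $6$ since $k' \ge 2$), so each elementary fold is legitimate and each intermediate image is a genuine simple cycle.

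The one point requiring care — and the only real obstacle — is that a single distance-$2$ fold of a cycle does \emph{not} yield a shorter cycle: identifying $0$ and $2$ strands the intermediate vertex $1$ as a pendant rather than deleting it. The construction therefore must pair each length-reducing fold with a second fold that absorbs this pendant, which is precisely why the reduction consumes two elementary folds per step rather than one. Verifying that the second fold genuinely removes the pendant and restores a cycle (rather than creating a chord or a fresh pendant) is the crux of the argument, and it is exactly here that the hypothesis $n \ge 5$, equivalently $k' \ge 2$ in the target, is used.
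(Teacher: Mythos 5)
Your proof is correct and takes essentially the same approach as the paper, which likewise identifies $v_0$ with $v_2$ and then $v_1$ with $v_3$, shortening the cycle by two vertices per pair of elementary folds and finishing by induction. Your careful verification that the first fold strands a pendant and that the second fold absorbs it is precisely the detail the paper's two-line proof leaves implicit.
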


\begin{proof}
Suppose the cycle is $v_0, v_1, v_2, v_3, \dots, v_t, v_0$.  Identify $v_0$ and $v_2$, and 
then $v_1$ and $v_3$. The result is a cycle with two fewer vertices, and so by induction 
the result follows.
\end{proof}

Combining these results we obtain the following.

\begin{obs}
\label{cyclesmixing}
Fix an integer $k \geq 1$. Then $C_{2r}$ is $C_{2k+1}$-mixing if and only if $r \leq 2k$.
\end{obs}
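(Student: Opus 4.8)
The plan is to prove both directions by invoking the cycle and folding results already established, observing throughout that $C_{2k+1} = G_{2k+1,k}$ has $\frac{p}{q} = \frac{2k+1}{k} = 2 + \frac1k \in (2,3]$, so Theorem~\ref{badweight}, Lemma~\ref{foldsto}, and Observation~\ref{cyclefolds} all apply in this setting.

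For the ``if'' direction, suppose $2 \leq r \leq 2k$. When $r = 2$ the cycle is $C_4$, which is $(p,q)$-mixing for every $\frac{p}{q} > 2$ by Observation~\ref{4cyclelemma}, hence $C_{2k+1}$-mixing. When $3 \leq r \leq 2k$, Proposition~\ref{cycleMix} gives directly that $C_{2r}$ is $C_{2k+1}$-mixing. This covers the whole range $2 \leq r \leq 2k$.

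For the converse, I would first establish the base case that $C_{4k+2}$ is not $C_{2k+1}$-mixing and then bootstrap to all larger even cycles by folding. For the base case, by Theorem~\ref{badweight} it suffices to exhibit a single $(2k+1,k)$-colouring of $C_{4k+2}$ under which the unique cycle is wrapped. Writing $C_{4k+2} = c_0 c_1 \cdots c_{4k+1} c_0$, I would take $f(c_i) = ki \bmod (2k+1)$. Every edge then has weight $W(c_i c_{i+1}, f) = k$, which lies in $[q, p-q] = [k, k+1]$, so $f$ is a valid colouring, and the closing condition $f(c_{4k+2}) = f(c_0)$ holds since $2k(2k+1) \equiv 0 \pmod{2k+1}$. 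Summing gives $W(C_{4k+2}, f) = (4k+2)k = 2k(2k+1)$, whereas $\frac{|E(C)|}{2}p = (2k+1)^2$; these differ (equivalently the wind is $2k \neq 2k+1 = \frac{|E(C)|}{2}$), so the cycle is wrapped and $C_{4k+2}$ is not $C_{2k+1}$-mixing.

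Finally, for any $r \geq 2k+1$, I would apply Observation~\ref{cyclefolds} to fold the even cycle $C_{2r}$ onto $C_{2m}$ with $m = 2k+1$, which is permitted since $2 \leq 2k+1 \leq r$; thus $C_{2r}$ folds to $C_{4k+2}$. As $C_{4k+2}$ is not $C_{2k+1}$-mixing and $2 < \frac{2k+1}{k} < 4$, Lemma~\ref{foldsto} yields that $C_{2r}$ is not $C_{2k+1}$-mixing, completing the ``only if'' direction. The only genuinely computational step is verifying that the proposed colouring of $C_{4k+2}$ is wrapped; everything else is an assembly of the cited cycle and folding results, so I expect the sole obstacle to be selecting the maximally wound colouring correctly.
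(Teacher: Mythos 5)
Your proposal is correct and follows essentially the same route as the paper: the same appeal to Observation~\ref{4cyclelemma} and Proposition~\ref{cycleMix} for $r \leq 2k$, the same colouring $f(c_i) = ik \bmod (2k+1)$ of $C_{4k+2}$ with total weight $(4k+2)k \neq (2k+1)^2$ for the base case, and the same reduction via Observation~\ref{cyclefolds} and Lemma~\ref{foldsto} for $r \geq 2k+1$. Your explicit verification that each edge weight $k$ lies in $[k,k+1]$ and your explicit citation of Observation~\ref{cyclefolds} merely spell out steps the paper leaves implicit.
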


\begin{proof}
If $r \leq 2k$, then the observation follows from Proposition~\ref{cycleMix} for cycles larger than $C_{4}$, and follows from Observation~\ref{4cyclelemma} for $4$-cycles.

If $r \geq 2k+1$,  let $C_{4k+2} = c_{0},\ldots,c_{4k+1},c_{0}$. 
The $C_{2k+1}$-colouring $f(c_{i}) = ik \bmod{2k+1}$ has $W(C_{4k+2},f) = (4k+2)(k) \neq (2k+1)(2k+1)$ and hence the cycle is wrapped.  Therefore $C_{4k+2}$ is not $C_{2k+1}$-mixing, and the result follows from Lemma~\ref{foldsto}.
\end{proof}

The above observation tells us that $C_{4k+2}$ is the minimum even cycle which is not 
$C_{2k+1}$-mixing. It turns out, $C_{4k+2}$ is the unique minimal (with respect to folding) 
bipartite graph which is not $C_{2k+1}$-mixing.

\begin{thm}
\label{oddcyclepinching}
A connected bipartite graph $G$ is not $C_{2k+1}$-mixing if and only if $G$ folds to $C_{4k+2}$.
\end{thm}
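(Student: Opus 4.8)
The plan is to prove the two directions separately, with the backward direction being the easy consequence of results already in hand and the forward direction requiring the real work. For the backward direction, suppose $G$ folds to $C_{4k+2}$. By Observation~\ref{cyclesmixing}, the cycle $C_{4k+2}$ (which is $C_{2(2k+1)}$, so $r = 2k+1 \geq 2k+1$) is not $C_{2k+1}$-mixing. Then Lemma~\ref{foldsto} immediately gives that $G$ is not $C_{2k+1}$-mixing. This direction is essentially free.

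For the forward direction, assume $G$ is a connected bipartite graph that is not $C_{2k+1}$-mixing; I must produce a folding $G \to C_{4k+2}$. By Theorem~\ref{badweight} there is a $C_{2k+1}$-colouring $f$ of $G$ and a cycle $C$ that is wrapped under $f$, i.e. $W(C,f) \neq \frac{|E(C)|}{2}(2k+1)$. By Lemma~\ref{chordlesscycle} I may assume $C$ is chordless (since the sum of cycles argument lets me pass from any wrapped cycle to a chordless wrapped one). Because $G$ is bipartite, $C$ has even length, say $|E(C)| = 2r$. The key numerical point is that a wrapped even cycle in a $C_{2k+1}$-colouring, once we know it is chordless, should be forced to have length exactly $4k+2$: a shorter even cycle cannot ``wind far enough'' to be wrapped against a $C_{2k+1}$ target (this is exactly the content of Observation~\ref{cyclesmixing}, that $C_{2r}$ is $C_{2k+1}$-mixing for all $r \leq 2k$). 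So the plan is to argue that the chordless wrapped cycle $C$ must itself be (isomorphic to) $C_{4k+2}$, with $f$ restricted to $C$ being the ``full wind'' colouring $c_i \mapsto ik \bmod (2k+1)$ up to symmetry.

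With this cycle $C \cong C_{4k+2}$ in hand, the strategy is to fold all of $G$ down onto $C$. Concretely, I would take the colouring $f: G \to C_{2k+1}$ and use it to guide a sequence of elementary folds. The idea is that $f$ itself exhibits $G$ as mapping homomorphically onto the odd cycle, and an elementary fold identifies two vertices at distance $2$; I want to repeatedly fold together vertices that $f$ sends to the same image (and which lie at distance $2$ in the current graph) until the entire graph collapses onto a copy of the wrapped cycle $C$. Connectivity of $G$ guarantees that every vertex can be reached and folded inward, while the fact that $C$ is wrapped (and chordless, of full winding) guarantees that the folds never collapse $C$ itself below length $4k+2$: Lemma~\ref{foldsto} shows folding preserves non-mixing, so no fold can destroy the wrapped cycle, and the minimal wrapped cycle has length exactly $4k+2$. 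Thus the folding terminates at $C_{4k+2}$.

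The main obstacle I anticipate is the forward-direction claim that the chordless wrapped cycle has length \emph{exactly} $4k+2$ and carries the full-wind colouring, together with the bookkeeping that a distance-$2$ pair of same-$f$-image vertices always exists to fold whenever $G$ has not yet collapsed to $C$. The first part is a finite arithmetic fact about windings on even cycles mapped to $C_{2k+1}$ (the weight of each edge is $k$ or $k+1$, i.e. $q$ or $p-q$ with $p=2k+1$, $q=k$, and one checks that reaching the wrapped threshold on a chordless even cycle forces all $2r$ edges to push in the same direction, pinning down $2r = 4k+2$). The second part — verifying that a valid sequence of elementary folds exists and genuinely lands on $C_{4k+2}$ rather than stalling or over-collapsing — is where I expect the careful argument to lie, and where I would lean on connectivity of $G$, bipartiteness, and the non-mixing-preservation of Lemma~\ref{foldsto} to control the process from both ends.
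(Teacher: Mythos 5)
Your backward direction is correct and identical to the paper's: Observation~\ref{cyclesmixing} plus Lemma~\ref{foldsto}. The forward direction, however, has a fatal gap at its pivot. You claim that a chordless wrapped cycle must have length \emph{exactly} $4k+2$ and carry the full-wind colouring. This is false: chordlessness is a property of the graph, and Observation~\ref{cyclesmixing} only bounds the length of a wrapped cycle from \emph{below}, not above. Concretely, for $k=1$ take $G = C_{12}$ with $f(c_i) = i \bmod 3$: the cycle is chordless (it is the whole graph), every edge has weight $1$, so $W(C,f) = 12$ and $w_f(C) = 4 \neq 6$, hence $C$ is wrapped — yet it has length $12$, not $4k+2 = 6$. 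Your own arithmetic sketch confirms this: all weights equal to $k$ forces $2k+1 \mid r$, so $2r$ can be \emph{any} multiple of $4k+2$. A graph may contain no cycle of length $4k+2$ at all, so the plan of "folding $G$ onto a copy of $C_{4k+2}$ already sitting inside $G$" cannot work in general.

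The second gap is that you invoke Lemma~\ref{foldsto} in the wrong direction. That lemma says non-mixing \emph{pulls back} along a fold (if the image is not mixing, the source is not); it does not say that folding a non-mixing graph yields a non-mixing graph. Indeed, $C_{4k+2}$ folds to $C_{4k}$ by Observation~\ref{cyclefolds}, and $C_{4k}$ \emph{is} mixing — so an arbitrary fold can destroy every wrapped cycle, and your process has no invariant protecting it. This is precisely the difficulty the paper's proof is engineered around: it takes a vertex-minimal counterexample and performs only folds for which it verifies by hand that a wrapped cycle survives. Namely, if three consecutive vertices $x,y,z$ on the wrapped cycle satisfy $\psi(x) = \psi(z)$, fold $x$ and $z$: the cycle loses two edges and its weight drops by exactly $p$ (Observation~\ref{obs:negedge}), so it remains wrapped. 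If no such triple exists, the $2$-regularity of $C_{2k+1}$ forces any off-cycle neighbour $a$ of a cycle vertex $y$ to satisfy $\psi(a) \in \{\psi(x),\psi(z)\}$, so $a$ folds onto the cycle while leaving $C$ intact and wrapped. Minimality then finishes the induction, with the base case ($G$ itself a cycle, necessarily of even length at least $4k+2$) handled by Observation~\ref{cyclefolds}. Note also that even a repaired version of your process can stall on an even cycle of length a proper multiple of $4k+2$ (the doubly wound $C_{8k+4}$ has no distance-two pair with equal images), at which point you would still need Observation~\ref{cyclefolds} — a step absent from your proposal. Your high-level instinct (use the colouring to guide folds down to a cycle) matches the paper's, but without the length dichotomy resolved and with the direction of Lemma~\ref{foldsto} corrected, the argument as written does not go through.
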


\begin{proof}
If $G$ folds to $C_{4k+2}$, then the claim follows from Lemma~\ref{foldsto} 
and Observation~\ref{cyclesmixing}.

To prove the converse, suppose $G$ is a vertex minimal counter-example.  That is, $G$ is not 
$C_{2k+1}$-mixing, but does not fold to $C_{4k+2}$.  By Theorem~\ref{badweight}, we may 
assume there is a $C_{2k+1}$-colouring $\psi$ of $G$ and $C$ a cycle of $G$ that is wrapped 
under $\psi$.  By Observations~\ref{cyclefolds} and~\ref{cyclesmixing} we may assume that 
$G$ is not a cycle.

First, if there exist three consecutive vertices $x,y,z \in V(C)$ such that $\psi(x) = \psi(z)$, 
then $xz \not \in E(G)$ and we fold $x$ and $z$ to create the vertex $v_{xz}$. 
Call the resulting graph $G_{xz}$ and the resulting cycle $C'$. The map $\psi$ induces a 
colouring $\psi': G_{xz} \to C_{2k+1}$ by
$$
\psi'(w) = \left\{ \begin{matrix} \psi(x) & \mbox{ if } w = v_{xz} \\
                                  \psi(w) & \mbox{ otherwise. } \end{matrix} \right.
$$
Let $P'$ be the path in $C$ from $z$ to $x$ not using $y$, and $P''$ be the path $x,y,z$. Then $W(C,\psi) = W(P'',\psi) + W(P',\psi)$. We have that  $W(P'',\psi) = p$ by Observation~\ref{obs:negedge}.  Finally, $W(P',\psi) = W(C',\psi')$ by construction.  Thus, $W(C',\psi') = W(C,\psi) - p$, $E(C') = E(C)-2$ and hence $W(C',\psi') \neq |E(C')|p/2$ and $C'$ is wrapped under $\psi'$. By minimality $G_{xz}$ folds to $C_{4k+2}$ and thus $G$ folds to $C_{4k+2}$, a contradiction.

Hence assume three such vertices do not exist. Since $G$ is connected, but not a cycle, there is a vertex $a \in V(G) \setminus V(C)$ adjacent to a vertex, say $y \in V(C)$.  Let $x$ and $z$ be the neighbours of $y$ in $C$. Observe that the vertex $\psi(y)$ is a vertex of the cycle $C_{2k+1}$. As $C_{2k+1}$ is $2$-regular and $\psi(x) \neq \psi(z)$, $\psi(x)$ and $\psi(z)$ are the two distinct neighbours of $\psi(y)$. As $\psi(a)$ is also a neighbour of $\psi(y)$, it follows that either $\psi(a) = \psi(x)$ or $\psi(a) = \psi(z)$. Without loss of generality we assume  $\psi(a) = \psi(x)$.  
Let $f$ be the fold identifying $a$ and $x$ to form $G_{ax}$.  The cycle $C$ is (still) 
wrapped under the $C_{2k+1}$-colouring $\psi \circ f$ of $G_{ax}$.  Consequently 
$G_{ax}$ is not $C_{2k+1}$-mixing. By minimality, $G_{ax}$ folds to $C_{4k+2}$ and thus $G$ folds to $C_{4k+2}$, a contradiction.
\end{proof}

Note that the folding operation cannot reduce the number of connected components in a graph.  
(Vertices from different components cannot be identified.)  Since a graph is $C_{2k+1}$-mixing 
if and only if each connected component is $C_{2k+1}$-mixing, we can remove the connected 
condition in the theorem and simply require that at least one component folds to $C_{4k+2}$.

We now examine foldings of non-bipartite graphs.  Using the above proof we construct a partial extension to circular colourings of the following theorem of Cook and Evans.

\begin{thm} [\cite{Chromaticnumberandfolding}]
Let $G$ be a connected graph. If $\chi(G) = k$, then $G$ folds to $K_{k}$. 
Furthermore, any graph that folds to $K_{k}$ is $k$-colourable.
\end{thm}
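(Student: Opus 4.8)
The plan is to prove the two directions separately. The reverse direction (``folds to $K_k$ $\Rightarrow$ $k$-colourable'') comes essentially for free, while the forward direction I would handle by induction on $|V(G)|$, mirroring the distance-$2$ identification technique used in the proof of Theorem~\ref{oddcyclepinching}.

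For the reverse direction, I would use that a folding is by definition a composition of elementary folds, and each elementary fold is a homomorphism; hence any folding $f\colon G \to H$ is a homomorphism with $f(G) \cong H$. If $H \cong K_k$, then $f$ is a homomorphism $G \to K_k$, which is precisely a proper $k$-colouring of $G$. So $G$ is $k$-colourable, in one line.

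For the forward direction, suppose $G$ is connected with $\chi(G)=k$ and induct on $n=|V(G)|$. In the base case $n=k$: if $G$ had a non-adjacent pair, then colouring that pair alike and the remaining $k-2$ vertices with distinct further colours would be a proper $(k-1)$-colouring, contradicting $\chi(G)=k$; hence every pair is adjacent and $G \cong K_k$, so the empty folding suffices. For the inductive step $n>k$, the goal is a single elementary fold to a graph $G_{xy}$ that is again connected with $\chi(G_{xy})=k$ and has $n-1$ vertices; applying the induction hypothesis to $G_{xy}$ and composing with the fold $G \to G_{xy}$ yields a folding $G \to K_k$. Connectivity is automatic, and identifying a distance-$2$ (hence non-adjacent) pair creates no loop. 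Since the fold map is a homomorphism $G \to G_{xy}$ we always have $\chi(G) \le \chi(G_{xy})$, so it suffices to find a distance-$2$ pair $x,y$ for which $G_{xy}$ remains $k$-colourable, as this forces $\chi(G_{xy})=k$. Crucially, $G_{xy}$ is $k$-colourable exactly when $G$ admits a proper $k$-colouring with $c(x)=c(y)$.

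Thus the crux reduces to the claim: if $G$ is connected, $\chi(G)=k$, and $n>k$, then some pair of vertices at distance exactly $2$ is monochromatic in some proper $k$-colouring. I would prove this by a minimum-distance argument with a recolouring step. Among all proper $k$-colourings $c$ and all pairs monochromatic under $c$, choose a pair $x,y$ minimizing $\operatorname{dist}_G(x,y)$; such a pair exists because $n>k$ forces two vertices to share a colour, and the minimum distance is at least $2$ since monochromatic vertices are non-adjacent. If the minimum is $2$ we are done. Otherwise I would take a shortest $x$–$y$ path, examine vertices two apart along it, and perform a Kempe swap on a suitable pair of colour classes to manufacture a strictly closer monochromatic pair, contradicting minimality.

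The main obstacle is exactly this last step: forcing a minimum-distance monochromatic pair to sit at distance $2$. The difficulty is that naively recolouring a single vertex on the shortest path need not preserve properness, so one must work with two-coloured Kempe components, verify the swap stays proper, and check that it genuinely yields a closer monochromatic pair; tracking which path vertices lie in the swapped component is the delicate point. Everything else — the base case, the descent of the colouring through the fold, the preservation of connectivity and of $\chi = k$, and the assembly of the elementary folds into one folding — is routine.
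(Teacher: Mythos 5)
First, for orientation: the paper offers no proof of this theorem at all --- it is quoted from Cook and Evans \cite{Chromaticnumberandfolding} --- so your proposal has to stand on its own rather than be measured against an argument in the text. Your reverse direction is correct and complete: a folding is a composition of elementary folds, hence a homomorphism onto its image $K_k$, hence a proper $k$-colouring. Your induction framework is also sound: the base case $n=k$ correctly forces $G\cong K_k$; identifying a monochromatic distance-$2$ pair $x,y$ makes the colouring descend, so $\chi(G_{xy})\le k$, while $\chi(G)\le\chi(G_{xy})$ holds because the fold is a homomorphism; connectivity is preserved. Everything therefore hinges on the claim you flag as the obstacle, and there the proof genuinely breaks. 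Write $a=c(x)=c(y)$ for a monochromatic pair at minimum distance $d\ge 3$ (over all proper $k$-colourings and pairs), take a shortest path $x=v_0,v_1,v_2,\dots,v_d=y$, set $b=c(v_2)\ne a$, and let $H$ be the Kempe component of $v_2$ in colours $\{a,b\}$. If $x\notin H$, swapping $H$ makes $(x,v_2)$ a monochromatic pair at distance $2$, as you intend. If $x\in H$ but $y\notin H$, the swap makes $(v_2,y)$ monochromatic at distance $d-2$ (note $d=3$ is impossible here, since then $y$ is adjacent to $v_2$ and coloured $a$, forcing $y\in H$), which contradicts minimality. But if \emph{both} $x$ and $y$ lie in $H$, the swap recolours $x$, $y$ to $b$ and $v_2$ to $a$: the pair $(x,y)$ is still monochromatic at the unchanged distance $d$, and no strictly closer pair is produced. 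Your stated plan --- ``manufacture a strictly closer monochromatic pair, contradicting minimality'' --- fails outright in this configuration, and no alternative choice of which component to swap repairs it, so as written the crux claim is unproven.

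The gap is fixable, and pleasantly the bad case is exactly the one where no swap is needed. If $x\in H$, take a path from $x$ to $v_2$ inside $H$, say $x=u_0,u_1,u_2,\dots$; its colours alternate $a,b,a,\dots$, and it has length at least $3$ (it cannot have length $1$ since $x$ and $v_2$ are at distance $2$, hence non-adjacent, and alternation forces odd length). Then $c(u_2)=a=c(x)$, the vertices $x$ and $u_2$ share the neighbour $u_1$, and they are non-adjacent because $c$ is proper; so $(x,u_2)$ is a monochromatic pair at distance exactly $2$ under $c$ itself, with no recolouring at all. Inserting this one observation into your case analysis proves the crux claim, and the rest of your induction then goes through as you describe. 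So: right strategy, honest flagging of the hard step, but the step as proposed would fail and needs the alternating-path observation (not a closer-pair descent) to close the case where the Kempe component captures both endpoints.
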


One might conjecture that if $\chi_{c}(G) = \frac{p}{q}$, and $G$ is connected, then $G$ folds to $G_{p,q}$. Unfortunately, this is not the case, due to a theorem of Zhu~\cite{zhu_1999}. 

\begin{thm}[\cite{zhu_1999}]
Fix integers $p,q$ such that $p \neq 2q+1$ and $q \neq 1$. Then there exists a graph $G$ such that $G$ is a strict subgraph of $G_{p,q}$, and $\chi_{c}(G) = \frac{p}{q}$. 
\end{thm}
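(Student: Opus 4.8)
The plan is to exhibit $G$ explicitly as $G_{p,q}$ with a single, carefully chosen edge removed, and to pin down its circular chromatic number by squeezing it between the fractional chromatic number $\chi_f$ and the trivial upper bound. Throughout I would assume $\gcd(p,q)=1$, which is the natural normalization when one writes $\chi_c=p/q$. Since circular cliques require $p/q\ge 2$, coprimality together with $q\neq 1$ gives $p/q>2$, and $p\neq 2q+1$ then forces $p\ge 2q+2$ (the only way to have $p/q=2$ in lowest terms is $q=1$, which is excluded; a non-reduced fraction with $p/q=2$ can be handled separately by the remark that deleting an edge from a nonempty matching leaves $\chi_c=2$). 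I would first record the two standard ingredients: $\alpha(G_{p,q})=q$, with the maximum independent sets being \emph{exactly} the arcs of $q$ consecutive vertices, and the inequalities $\chi_c(H)\ge\chi_f(H)\ge |V(H)|/\alpha(H)$, valid for every graph $H$. As $G_{p,q}$ is vertex-transitive these already recover $\chi_c(G_{p,q})=p/q$.

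Next I would set $m=\lfloor p/2\rfloor$, let $e=\{0,m\}$ (a genuine edge, since $q\le m\le p-q$), and take $G=G_{p,q}-e$, which is a strict subgraph. The upper bound $\chi_c(G)\le\chi_c(G_{p,q})=p/q$ is immediate from monotonicity of $\chi_c$ under passing to subgraphs. For the matching lower bound it suffices to show $\alpha(G)=q$, for then $\chi_c(G)\ge\chi_f(G)\ge p/q$ and we are done.

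The main obstacle is precisely the claim $\alpha(G_{p,q}-e)=q$, and this is where the hypotheses are spent. Suppose $S$ were an independent set of $G$ with $|S|=q+1$. Since $\alpha(G_{p,q})=q$, $S$ is not independent in $G_{p,q}$, yet its only possible edge is $e$; hence $S\supseteq\{0,m\}$, and both $S\setminus\{0\}$ and $S\setminus\{m\}$ are maximum independent sets of $G_{p,q}$, i.e.\ arcs of $q$ consecutive vertices. Writing $A=S\setminus\{m\}$ as a $q$-arc containing $0$, the requirement that $(A\setminus\{0\})\cup\{m\}$ again be a $q$-arc would force $m$ to be one of the two vertices extending $A$ by a single step, i.e.\ $m\in\{q,\,p-q\}$. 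Here $q\neq 1$ is used so that $A\setminus\{0\}$ is a nonempty $(q-1)$-arc that genuinely pins down the extension (for $K_p$ this step collapses and any deleted edge creates an independent pair), while $p\neq 2q+1$ together with $p>2q$ guarantees $\lfloor p/2\rfloor\notin\{q,\,p-q\}$, yielding the contradiction.

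I expect the only delicate point to be the bookkeeping in this arc-extension step — distinguishing whether $0$ sits at an endpoint or in the interior of $A$ — while everything else follows from vertex-transitivity and the standard chain $\chi_c\ge\chi_f\ge n/\alpha$. The excluded cases are not accidental: for $q=1$ the clique $K_p$ and for $p=2q+1$ the odd cycle $C_{2q+1}$ are both edge-critical for $\chi_c$, so there deleting any single edge must strictly decrease the circular chromatic number, and the construction provably cannot work.
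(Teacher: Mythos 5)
Your proposal cannot be compared against an in-paper argument, because the paper does not prove this statement at all: it is quoted verbatim from Zhu \cite{zhu_1999} and used as a black box to rule out the naive folding conjecture. So the relevant question is simply whether your self-contained derivation is sound, and in architecture it is: the squeeze $\frac{p}{q} \le \frac{|V|}{\alpha} \le \chi_f \le \chi_c \le \chi_c(G_{p,q}) = \frac{p}{q}$ is valid, the edge $\{0,m\}$ with $m=\lfloor p/2\rfloor$ does exist (you correctly get $p\ge 2q+2$ from coprimality, $q\ne 1$, $p\ne 2q+1$, whence $q\le m\le p-q$), and your arc-extension step is correct as you describe it: a $(q+1)$-set independent in $G_{p,q}-e$ must contain both endpoints of $e$, both $S\setminus\{0\}$ and $S\setminus\{m\}$ are maximum independent sets of $G_{p,q}$, and if these are $q$-arcs then $m\in\{q,p-q\}$ (with the interior-versus-endpoint case for $0$ handled exactly as you anticipate), which is incompatible with $m=\lfloor p/2\rfloor$ since $\lfloor p/2\rfloor = q$ forces $p\in\{2q,2q+1\}$ and $\lfloor p/2\rfloor = p-q$ forces $p\in\{2q-1,2q\}$, all excluded. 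Your closing sanity check that $K_p$ and $C_{2q+1}$ are edge-critical, so the excluded cases are genuinely necessary, is also right.

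The one point I would press you on is the ingredient you ``record'' without proof: that the maximum independent sets of $G_{p,q}$ (with $\gcd(p,q)=1$, $p>2q$) are \emph{exactly} the arcs of $q$ consecutive vertices. The statement is true, and $\alpha(G_{p,q})=q$ is indeed immediate from vertex-transitivity and $\chi_f = p/q$, but the exactness claim is the entire load-bearing wall of your proof and is not a one-liner in the full range $2<\frac{p}{q}<4$. The natural argument (all pairwise circular distances are at most $q-1$, so partial sums around the set must jump over the forbidden band $[q,p-q]$ in a single step of size at most $q-1$) settles it only when $p\ge 3q-2$; for $2q+2\le p\le 3q-3$, i.e.\ part of the regime $\frac{p}{q}<3$, an independent set could a priori split into two sub-clusters inside the radius-$(q-1)$ ball around a vertex, close to each other the long way around, and one needs a further argument to show that a size-$q$ such set is forced to close up into an arc. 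One clean route: $S-S$ must avoid the band $[q,p-q]$, hence $S-S\subseteq\{-(q-1),\dots,q-1\}$, a set of size $2q-1=2|S|-1$; the stabilizer of $S-S$ is trivial (a nontrivial coset would meet the band), so Kneser-type equality analysis forces $S$ to be an arithmetic progression, and distinctness of the $q$ multiples of the common difference inside the allowed band then forces difference $\pm 1$. So either cite the exact-structure lemma properly or supply an argument of this kind; as written, the phrase ``standard ingredients'' conceals the only genuinely hard step. With that lemma in place, your proof is correct and has the merit, compared with the paper's bare citation, of producing an explicit witness: $G_{p,q}$ minus the single edge $\{0,\lfloor p/2\rfloor\}$ (indeed, minus any edge $\{0,m\}$ with $m\notin\{q,p-q\}$).
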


As a strict subgraph of $G_{p,q}$ cannot possibly fold to $G_{p,q}$ we see that the natural conjecture is false.  As~\cite{Chromaticnumberandfolding} affirms $G$ does fold to the clique that determines its chromatic number, the result of Zhu does leave open the possibility that $G$ folds to the odd cycle $G_{2k+1,k}$ when $\chi_c(G) = \frac{2k+1}{k}$.  We now prove this is the case.

We note that the hypothesis  in Theorem~\ref{oddcyclepinching} requiring the graph to be bipartite is used only to determine the shortest cycle to which $G$ can fold and not be $C_{2k+1}$-mixing.  In fact, 
the proof of Theorem~\ref{oddcyclepinching} shows for any graph $G$,  if $G$ admits a 
$C_{2k+1}$-colouring $\psi$ and contains a wrapped cycle $C$ under $\psi$, 
then $G$ folds to some cycle $C'$ (and $C'$ admits a $C_{2k+1}$-colouring).    
In particular, if $G$ is not bipartite (and thus $G$ has a wrapped cycle under any $\psi$), then  
$G$ folds to an odd cycle $C'$ and $C'$ has length at least $2k+1$. Hence we have the following lemma.

\begin{lem}
\label{foldingwhennotbipartite}
Let $k \geq 1$ be an integer.  Suppose $G$ is a connected, non-bipartite graph and has a $C_{2k+1}$-colouring. Then $G$ folds to $C_{2k+1}$. 
\end{lem}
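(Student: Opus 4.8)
The plan is to combine the non-bipartiteness of $G$ with the constructive content of the proof of Theorem~\ref{oddcyclepinching} recorded in the paragraph preceding this lemma, and then to contract the resulting cycle down to $C_{2k+1}$ using Observation~\ref{cyclefolds}. The one substantive thing to pin down is that the cycle we fold onto can be taken to be \emph{odd}.

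First I would observe that every odd cycle is wrapped under every $C_{2k+1}$-colouring. For any cycle $C$ we have $W(C,\psi) = p \cdot w_\psi(C)$ with $w_\psi(C) \in \mathbb{Z}$, whereas $\frac{|E(C)|}{2}p$ is not an integer multiple of $p$ when $|E(C)|$ is odd; hence $W(C,\psi) \neq \frac{|E(C)|}{2}p$. Since $G$ is non-bipartite it contains an odd cycle $C$, so fixing any $C_{2k+1}$-colouring $\psi$ of $G$ (which exists by hypothesis) produces a graph carrying a wrapped cycle.

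Next I would invoke the generalized form of Theorem~\ref{oddcyclepinching} noted above: a graph admitting a $C_{2k+1}$-colouring $\psi$ together with a wrapped cycle folds to a cycle $C'$ that itself admits a $C_{2k+1}$-colouring. The point to verify is that $C'$ is odd. This holds because the two folding steps used in that proof either identify two cycle vertices (shrinking the tracked wrapped cycle by two edges) or identify an off-cycle vertex with a cycle vertex (leaving its length unchanged); in both cases the parity of the wrapped cycle is preserved, and the process terminates when $G$ has been folded onto the cycle itself. Starting from the odd wrapped cycle $C$, the terminal cycle $C'$ is therefore odd. Moreover, since $C'$ admits a homomorphism to $C_{2k+1} = G_{2k+1,k}$ and $\chi_c(C_{2m+1}) = \frac{2m+1}{m}$, the Bondy--Hell inequality $\frac{2m+1}{m} \leq \frac{2k+1}{k}$ forces $|E(C')| \geq 2k+1$; write $C' = C_{2m+1}$ with $m \geq k$.

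Finally I would apply Observation~\ref{cyclefolds}: the odd cycle $C_{2m+1}$ with $m \geq k$ folds to $C_{2k+1}$ (the case $m = k$ being the identity). As a composition of foldings is again a folding, chaining the fold of $G$ onto $C'$ with the fold of $C'$ onto $C_{2k+1}$ shows that $G$ folds to $C_{2k+1}$. I expect the main obstacle to be the middle step, namely confirming that the folding extracted from the proof of Theorem~\ref{oddcyclepinching} terminates at an odd cycle rather than an even one; the single boundary case to watch is $k=1$, where the target is $C_3 = K_3$ and one either checks directly that Observation~\ref{cyclefolds} still yields $C_{2m+1} \to C_3$ or appeals to the Cook--Evans folding theorem, since in that case $\chi(G) = 3$.
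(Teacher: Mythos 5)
Your proposal is correct and takes essentially the same route as the paper, whose own justification is precisely the paragraph preceding the lemma: a non-bipartite graph carries a wrapped (odd) cycle under any $C_{2k+1}$-colouring, the constructive proof of Theorem~\ref{oddcyclepinching} then folds $G$ onto an odd cycle of length at least $2k+1$ admitting a $C_{2k+1}$-colouring, and Observation~\ref{cyclefolds} folds that cycle down to $C_{2k+1}$. Your additional verifications --- the parity of the tracked cycle being preserved by both fold types, the Bondy--Hell inequality forcing $m \geq k$, and the $k=1$ boundary case of Observation~\ref{cyclefolds} --- simply make explicit details the paper leaves implicit.
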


\begin{cor}
Let $G$ be a connected graph. If $\chi_{c}(G) = \frac{2k+1}{k}$, then $G$ folds to $C_{2k+1}$. 
Furthermore, any graph which folds to $C_{2k+1}$ is $C_{2k+1}$-colourable.
\end{cor}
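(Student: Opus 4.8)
The plan is to derive both assertions almost immediately from Lemma~\ref{foldingwhennotbipartite} together with the defining property of the circular chromatic number; the substantive work has already been carried out in the lemma, so the corollary is essentially a matter of verifying that its hypotheses are met.

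First I would handle the forward direction. Since $\chi_c(G) = \frac{2k+1}{k}$, the definition of the circular chromatic number as the minimum $\frac{p}{q}$ with $G \to G_{p,q}$ supplies a homomorphism $G \to G_{2k+1,k}$; as $G_{2k+1,k} \cong C_{2k+1}$, this is exactly a $C_{2k+1}$-colouring of $G$. Next I would argue that $G$ cannot be bipartite: every bipartite graph has circular chromatic number at most $2$, whereas $\frac{2k+1}{k} = 2 + \frac{1}{k} > 2$ for all $k \geq 1$. Thus $G$ is a connected, non-bipartite graph admitting a $C_{2k+1}$-colouring, and Lemma~\ref{foldingwhennotbipartite} applies verbatim to conclude that $G$ folds to $C_{2k+1}$.

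For the second statement I would simply unwind the definition of folding. A folding $G \to C_{2k+1}$ is by definition a composition of elementary folds; since each elementary fold is a homomorphism and homomorphisms compose, the folding is itself a homomorphism $G \to C_{2k+1}$. Hence $G$ admits a $C_{2k+1}$-colouring, with no connectivity hypothesis needed.

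I do not expect a genuine obstacle, as all the difficulty is absorbed into Lemma~\ref{foldingwhennotbipartite}. The only point requiring a moment's care is the non-bipartiteness step, where one must invoke the bound $\chi_c \leq 2$ for bipartite graphs (equivalently, note $\frac{2k+1}{k} > 2$) to guarantee that $G$ satisfies the non-bipartite hypothesis of the lemma rather than falling into the trivial bipartite regime.
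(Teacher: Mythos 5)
Your proposal is correct and follows essentially the same route as the paper: both directions reduce to Lemma~\ref{foldingwhennotbipartite} (after checking connectedness, non-bipartiteness via $\chi_c > 2$, and the existence of a $C_{2k+1}$-colouring from $\chi_c(G)=\frac{2k+1}{k}$), with the second claim handled by noting that elementary folds are homomorphisms and homomorphisms compose. If anything, your version is slightly tighter, since the paper's proof detours through Theorem~\ref{bound} to conclude $G$ is not $C_{2k+1}$-mixing, a step the lemma's hypotheses do not actually require.
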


\begin{proof}
Suppose $\chi_{c}(G) = \frac{2k+1}{k}$. Then $G$ is not bipartite, and hence Theorem~\ref{bound} implies that $G$ is not $C_{2k+1}$-mixing, as $4 > \frac{2k+1}{k}$. 
By Lemma~\ref{foldingwhennotbipartite}, $G$ folds to $C_{2k+1}$. To see the second claim, 
observe that elementary folds are homomorphisms and homomorphisms compose.  
\end{proof}

Combining the above discussion gives the following result.
\begin{thm}
If $q = 1$ or $p = 2q+1$, then every connected graph $G$ with $\chi_{c}(G) = \frac{p}{q}$ folds to $G_{p,q}$. For all other pairs $(p,q)$, there exists a connected graph $G$ with $\chi_{c}(G) = \frac{p}{q}$ that does not fold to $G_{p,q}$. 
\end{thm}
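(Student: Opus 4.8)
The plan is to read the statement as a dichotomy and dispatch each half by invoking results already in hand: the two forward implications ($q=1$ and $p=2q+1$) are each a direct translation of a folding-to-target theorem, while the negative half layers a vertex-counting obstruction on top of Zhu's construction.

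First I would dispose of the case $q=1$. Here $G_{p,1} \cong K_p$, and the hypothesis $\chi_c(G) = \frac{p}{1} = p$ forces $\chi(G) = \lceil \chi_c(G) \rceil = p$, since $p$ is an integer. The theorem of Cook and Evans then yields that the connected graph $G$ folds to $K_p = G_{p,1}$, as required. Next I would handle $p = 2q+1$. Since $\gcd(2q+1,q) = 1$, the fraction $\frac{2q+1}{q}$ is already in lowest terms and $G_{2q+1,q} \cong C_{2q+1}$. Applying the preceding corollary with $k = q$, whose hypothesis is exactly $\chi_c(G) = \frac{2q+1}{q}$, gives that $G$ folds to $C_{2q+1} = G_{2q+1,q}$. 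This settles the positive direction.

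For the remaining pairs, where $p \neq 2q+1$ and $q \neq 1$, I would appeal to Zhu's theorem to produce a graph $G$ that is a strict subgraph of $G_{p,q}$ with $\chi_c(G) = \frac{p}{q}$, and then argue that $G$ cannot fold to $G_{p,q}$. The key observation is that an elementary fold identifies two distinct vertices and hence strictly decreases the vertex count by one; consequently a folding $f\colon G \to H$, being a composition of $m \geq 0$ elementary folds with $f(G) \cong H$, satisfies $|V(H)| = |V(G)| - m \leq |V(G)|$, with equality only when $m = 0$, i.e. when $f$ is the identity and $G \cong H$. If $G$ folded to $G_{p,q}$ we would therefore need $|V(G)| \geq |V(G_{p,q})| = p$. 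When $G$ has fewer than $p$ vertices this is impossible outright; when $G$ is spanning, so $|V(G)| = p$, we would be forced into $m = 0$ and hence $G \cong G_{p,q}$, contradicting that a strict spanning subgraph has strictly fewer edges. Either way $G$ does not fold to $G_{p,q}$.

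Since both halves rest on theorems stated above (Cook and Evans, the preceding corollary, and Zhu's theorem), the only genuinely new ingredient is the vertex-counting obstruction in the last paragraph, and that is where I would be most careful: one must record precisely that folding is vertex-count monotone and that a trivial folding forces isomorphism, so that \emph{strict subgraph}---whether realized by dropping edges or dropping vertices---cannot be collapsed back up onto the full circular clique. This is the main, if modest, obstacle; the rest is bookkeeping of the dichotomy.
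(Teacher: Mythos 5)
Your proof is correct and takes essentially the same approach as the paper: the $q=1$ case via the Cook--Evans theorem, the $p=2q+1$ case via the corollary that $\chi_c(G)=\frac{2k+1}{k}$ forces a fold to $C_{2k+1}$, and the remaining pairs via Zhu's strict-subgraph construction. The only difference is that you make explicit the vertex-counting reason a strict subgraph of $G_{p,q}$ cannot fold onto $G_{p,q}$, which the paper asserts in a single sentence without elaboration.
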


 We finish this subsection by showing that Theorem~\ref{oddcyclepinching} that the circular mixing number of bipartite graphs is $2$ as conjectured in~\cite{BrewsterNoel}.

\begin{lem}
For every connected, bipartite graph $G$, there exists a $k_{0}$ such that for every $k \geq k_{0}$, $G$ is $C_{2k+1}$-mixing.
\end{lem}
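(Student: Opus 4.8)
The plan is to read the result straight off Theorem~\ref{oddcyclepinching}. That theorem asserts that a connected bipartite graph $G$ is not $C_{2k+1}$-mixing if and only if $G$ folds to $C_{4k+2}$. Equivalently, $G$ is $C_{2k+1}$-mixing precisely when $G$ does \emph{not} fold to $C_{4k+2}$. So it suffices to show that once $k$ is large enough, $G$ cannot fold to $C_{4k+2}$ at all.

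The key observation is a vertex count. An elementary fold identifies two distinct vertices into one, so it strictly decreases the number of vertices by exactly one; since a folding is a composition of elementary folds, any folding $f\colon G \to H$ satisfies $|V(H)| \leq |V(G)|$. In particular, if $G$ folds to $C_{4k+2}$, then $4k+2 = |V(C_{4k+2})| \leq |V(G)|$.

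Accordingly, I would set $k_0 = \lfloor (|V(G)|-2)/4 \rfloor + 1$, so that $4k+2 > |V(G)|$ for every $k \geq k_0$. For such $k$, the vertex count alone rules out a folding of $G$ onto $C_{4k+2}$, and Theorem~\ref{oddcyclepinching} then yields that $G$ is $C_{2k+1}$-mixing.

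There is essentially no obstacle here: the statement is a clean finiteness consequence of the folding characterization, the only ingredient being that folds never create new vertices. The bipartite hypothesis is needed only to license the application of Theorem~\ref{oddcyclepinching}; a non-bipartite $G$ would instead satisfy $m_c(G) \geq 4$ by Theorem~\ref{bound} and so would never be $C_{2k+1}$-mixing in this range.
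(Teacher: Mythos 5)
Your proof is correct, and it follows the same skeleton as the paper's --- reduce via Theorem~\ref{oddcyclepinching} to showing that $G$ cannot fold to $C_{4k+2}$ once $k$ is large --- but the obstruction you use is genuinely different. You observe that each elementary fold decreases the number of vertices by exactly one, so any fold image of $G$ has at most $|V(G)|$ vertices, and you take $k_0$ with $4k_0+2 > |V(G)|$. The paper instead tracks cycle lengths: an elementary fold leaves the length of any cycle unchanged or shortens it by $2$, so no fold image of $G$ can contain a cycle longer than a longest cycle $C$ of $G$, and it takes $k_0$ with $4k_0+2 > |V(C)|$. Your invariant (order) is coarser but simpler --- it needs no analysis of how folds act on cycles, and it covers acyclic $G$ without any special case, whereas the paper's ``largest cycle'' is undefined for forests. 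What the paper's finer invariant (circumference) buys is a smaller threshold: whenever the longest cycle of $G$ is shorter than $|V(G)|$ (e.g.\ a $C_4$ with a long pendant path), the paper's $k_0$ beats yours, and the paper remarks that its choice is tight, as witnessed by $G = C_{4k+2}$ --- where, incidentally, the two bounds coincide. Since the lemma only asserts existence of $k_0$, both arguments are complete proofs; your concluding remark that bipartiteness is needed only to invoke Theorem~\ref{oddcyclepinching}, with non-bipartite graphs excluded outright by Theorem~\ref{bound}, is also accurate.
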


\begin{proof}
Let $C$ be a largest cycle in $G$. Pick $k_{0}$ such that $4k_{0}+2>|V(C)|$ and $k \geq k_0$. By Lemma~\ref{oddcyclepinching}, we know that $G$ is not $C_{2k+1}$-mixing if and only if $G$ folds to $C_{4k+2}$. Consider any cycle of $G$ and any elementary folding applied to $G$.  Then the length of the cycle remains the same or is reduced by $2$.  Consequently, folding $G$ produces a graph where all cycles have length strictly less than $4k_0+2$.  In particular, $G$ cannot fold to $C_{4k+2}$.  Therefore, $G$ is $C_{2k+1}$-mixing.
\end{proof}

Note that the choice of $k_{0}$ given in the above proof is tight, and can be seen by considering $C_{4k+2}$. Since $\frac{2k+1}{k}$ tends to $2$ as $k$ approaches infinity, we have:

\begin{cor}
If $G$ is bipartite, then $m_{c}(G) =2$.
\end{cor}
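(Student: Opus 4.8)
The plan is to obtain $m_c(G) = 2$ as an immediate consequence of the preceding lemma together with the definition of the circular mixing number. For the lower bound, recall that the circular clique $G_{p,q}$ is only defined when $\frac{p}{q} \geq 2$, so every ratio appearing in the infimum $m_c(G) = \inf\{\frac{p}{q} \mid G \text{ is } G_{p,q}\text{-mixing}\}$ is at least $2$. Hence $m_c(G) \geq 2$ with no further argument.

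For the matching upper bound I would first reduce to connected graphs. As observed following Theorem~\ref{oddcyclepinching}, a graph is $C_{2k+1}$-mixing if and only if each of its connected components is. Writing the bipartite graph $G$ as a disjoint union of finitely many connected bipartite components $G_1, \dots, G_m$ and applying the preceding lemma to each, I obtain thresholds $k_0^{(i)}$; setting $k_0 = \max_i k_0^{(i)}$ guarantees that every $G_i$, and therefore $G$ itself, is $C_{2k+1}$-mixing for all $k \geq k_0$. Equivalently, $G$ is $(2k+1,k)$-mixing for all $k \geq k_0$.

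It remains to read off the infimum. The ratios $\frac{2k+1}{k} = 2 + \frac{1}{k}$ all belong to the set defining $m_c(G)$ once $k \geq k_0$, and they decrease to $2$. Consequently $m_c(G) \leq \inf_{k \geq k_0}\bigl(2 + \tfrac{1}{k}\bigr) = 2$, which combined with the lower bound yields $m_c(G) = 2$.

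I do not anticipate a genuine obstacle: all the substance is already contained in the preceding lemma, and the convergence $\frac{2k+1}{k} \to 2$ does the rest. The only step requiring a moment of care is the passage from connected to arbitrary bipartite graphs, ensuring that a single threshold $k_0$ can be chosen uniformly; since $G$ has only finitely many components this is routine.
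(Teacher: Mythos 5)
Your proposal is correct and follows essentially the same route as the paper: the upper bound comes from the preceding lemma together with $\frac{2k+1}{k} \to 2$, and the lower bound from the fact that circular cliques are only defined for $\frac{p}{q} \geq 2$. Your explicit reduction to connected components (taking the maximum of finitely many thresholds $k_0^{(i)}$) is a detail the paper leaves implicit, relying on its earlier remark after Theorem~\ref{oddcyclepinching} that mixing is componentwise, so you have merely spelled out what the paper takes for granted.
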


Combining this with Theorem \ref{bound} we conclude:

\begin{thm}
\label{completebound}
If $G$ is bipartite, then $m_{c}(G)=2$. Otherwise, $m_{c}(G) \geq \max{\{4,\omega(G)+1\}}$.
\end{thm}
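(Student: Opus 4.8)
The plan is to assemble the statement directly from the two results that immediately precede it, since the dichotomy bipartite versus non-bipartite is exactly the case split already set up in the excerpt. There is essentially nothing new to prove: the substantive work has been carried out in establishing the bipartite bound (via the folding characterization) and in the quoted bound of Brewster and Noel for the non-bipartite case. So I would simply verify that the two cases are exhaustive and that each supplies the required inequality (or equality).

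For the bipartite case I would invoke the corollary established just above, which asserts $m_{c}(G) = 2$ for every bipartite $G$. I would recall that this corollary rests on Theorem~\ref{oddcyclepinching}: a connected bipartite $G$ fails to be $C_{2k+1}$-mixing precisely when it folds to $C_{4k+2}$, and since every elementary fold either preserves a cycle's length or reduces it by $2$, a bipartite graph whose longest cycle has length less than $4k+2$ cannot fold to $C_{4k+2}$ and is therefore $C_{2k+1}$-mixing for all large $k$. As $\frac{2k+1}{k} \to 2$, the infimum defining $m_{c}(G)$ is $2$; the connectedness hypothesis is removed using that $C_{2k+1}$-mixing is determined componentwise.

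For the non-bipartite case I would apply Theorem~\ref{bound} verbatim, which gives $m_{c}(G) \geq \max\{4, \omega(G)+1\}$ whenever $G$ is non-bipartite. Combining the two cases yields the theorem.

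I do not expect any genuine obstacle in this final step, as it is purely a matter of collating the bipartite equality with the non-bipartite lower bound. The only point meriting a line of care is confirming that $m_{c}(G)$ is well defined in the bipartite case, i.e.\ that a bipartite graph is $(p,q)$-mixing for at least one ratio $\frac{p}{q}$; this is exactly what the preceding lemma guarantees, so the infimum is taken over a nonempty set. Any true difficulty lives entirely in the earlier results, in particular in the folding machinery of Theorem~\ref{oddcyclepinching} that drives the bipartite bound.
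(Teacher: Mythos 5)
Your proposal matches the paper's own treatment exactly: the paper derives Theorem~\ref{completebound} by combining the corollary $m_c(G)=2$ for bipartite $G$ (itself resting on the $k_0$ lemma and Theorem~\ref{oddcyclepinching}, with connectedness removed componentwise) with Theorem~\ref{bound} for the non-bipartite case. Nothing is missing; the proof is, as you say, pure collation of the two preceding results.
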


\subsection{Showing that odd cycle mixing is co-NP-complete}

In this subsection we show the problem of determining whether an input graph $G$ folds
to the cycle $C_{2k}$, denoted as the \Hfold{C_{2k}} problem, is NP-complete.
For our reduction we require two additional homomorphism problems.   

\begin{defn}
Let $G$ be a graph and $H$ an induced subgraph of $G$. A \emph{retraction} of $G$ to $H$ is a 
homomorphism $r: G \to H$ such that $r(h) =h$ for every vertex $h \in V(H)$. 
If there exists a retraction of $G$ to $H$, we say $G$ \emph{retracts} to $H$.
\end{defn}

The \Hret{H} problem takes as input a graph $G$ containing a labelled subgraph isomorphic to $H$.
It asks if there is a retraction of $G$ to the copy of $H$ in $G$.  We will give a reduction 
from \Hret{C_{2k}}, which is known to be NP-complete for all $k \geq 3$ 
(proven independently by Feder, and by Dukes, Emerson, and MacGillivray, see \cite{vikas, rettocycles}), 
to the \Hfold{C_{2k}} problem. 

An easy observation is that for connected graphs, a retraction is a fold.  (Consider a vertex
$v$ at distance one from $H$.  The image of $v$ under the retraction is at distance two from $v$.) 
Thus, we have the following.

\begin{obs}[\cite{Hahn1997,godsil01}] 
\label{retracttofold}
Let $G$ be a connected graph, and $H$ an induced subgraph of $G$. If $G$ retracts to $H$, 
then $G$ folds to $H$.
\end{obs}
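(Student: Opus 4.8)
The plan is to induct on $|V(G) \setminus V(H)|$, realizing the retraction $r : G \to H$ as a composition of elementary folds that peel off one vertex outside $H$ at a time. For the base case $V(G) = V(H)$, I would note that since $H$ is induced this forces $G = H$, so the empty composition of folds witnesses that $G$ folds to $H$. For the inductive step I would exploit connectivity: as $V(G) \setminus V(H) \neq \emptyset$ and $H$ is nonempty (since $G$ is), some edge joins the two parts, so I can choose $v \in V(G) \setminus V(H)$ adjacent to a vertex $h \in V(H)$. This is exactly the ``vertex at distance one from $H$'' mentioned just before the statement.

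The crux is to verify that $v$ and $r(v)$ lie at distance exactly $2$, which is what licenses an elementary fold. First, $v \neq r(v)$, since $r(v) \in V(H)$ while $v \notin V(H)$. Next, applying the homomorphism $r$ to the edge $vh$ and using $r(h) = h$ shows $r(v)\,h \in E(G)$, so $v, h, r(v)$ is a walk of length $2$ and the distance is at most $2$. The one point needing a short argument is non-adjacency: an edge $v\,r(v)$ would map under $r$ to a loop at $r(v)$, which is impossible in a simple graph. Hence the distance is exactly $2$, and the elementary fold $\phi$ identifying $v$ with $r(v)$ (keeping the label $r(v)$, so that $\phi$ fixes $H$ pointwise) is available, yielding a connected graph $G'$ on vertex set $V(G) \setminus \{v\}$.

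To close the induction I would check that $G'$ still retracts to $H$. The fold only adds edges from $r(v)$ to neighbours of $v$; if such a neighbour lies in $V(H)$, then it was already adjacent to $r(v)$ in $G$ (again because $r$ is a homomorphism fixing $H$), so no new edge appears inside $V(H)$ and $H$ remains an induced subgraph of $G'$. Since $r$ sends both $v$ and $r(v)$ to $r(v)$, it is constant on the identified pair and therefore factors as $r = r' \circ \phi$ for a homomorphism $r' : G' \to H$; and $r'$ still fixes $H$ pointwise because $r$ does, so $r'$ is a retraction. Applying the induction hypothesis, $G'$ folds to $H$ via some folding $\psi$, and then $\psi \circ \phi$ is a composition of elementary folds whose image is $\psi(G') \cong H$, so $G$ folds to $H$.

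I do not expect a genuine obstacle here, since this is the ``easy observation'' flagged in the text; the steps that warrant care are the non-adjacency argument that pins the distance at exactly $2$ (so that the fold is legal), and the verification that folding preserves both the inducedness of $H$ and the retraction, which is what keeps the induction well-posed.
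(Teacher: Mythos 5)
Your proof is correct and follows exactly the route the paper sketches in its parenthetical remark: pick a vertex $v$ at distance one from $H$, observe its image $r(v)$ is at distance exactly two (your loop argument rules out adjacency), fold $v$ onto $r(v)$, and iterate. You simply supply the details the paper leaves implicit --- the induction, the preservation of inducedness, and the factoring $r = r' \circ \phi$ --- all of which check out.
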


We also require a theorem of Vikas~\cite{vikas} relating retractions and compactions.

\begin{defn}
A \emph{compaction} of $G$ to $H$ is a homomorphism $G \to H$ that is onto
$V(H)$ and onto all non-loops of $E(H)$.  We say $G$ \emph{compacts} to $H$.
\end{defn}

The problem \Hcomp{H} takes an input graph $G$ and asks if there is a compaction of
$G$ to $H$. It is easy to see for graphs without isolated vertices, a homomorphism that is edge surjective
is also vertex surjective.  Additionally for irreflexive graphs the homomorphism $f: G \to H$ is a 
compaction if and only if $f(G) = H$.

\begin{obs}
\label{foldtocompact}
Suppose $G$ and $H$ are irreflexive graphs.
Let $f:G \to H$ be a folding. Then $f$ is a compaction of $G$ to $H$. 
\end{obs}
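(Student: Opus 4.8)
The plan is to reduce the claim to the criterion recorded immediately before the statement: for irreflexive graphs, a homomorphism $f: G \to H$ is a compaction if and only if $f(G) = H$. Since $H$ is irreflexive, every edge of $H$ is a non-loop, so ``onto all non-loops of $E(H)$'' is the same as ``onto all of $E(H)$''; together with ``onto $V(H)$'' this says precisely that the image subgraph $f(G)$ has vertex set $V(H)$ and edge set $E(H)$, i.e. $f(G) = H$. Granting this equivalence, it suffices to show that a folding $f: G \to H$ satisfies $f(G) = H$.

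To establish $f(G) = H$, I would argue directly from the definition of folding. The image $f(G)$ is a subgraph of the codomain $H$, with vertex set $f(V(G)) \subseteq V(H)$ and edge set $\{ f(u)f(v) : uv \in E(G) \} \subseteq E(H)$. By definition a folding also satisfies $f(G) \cong H$, so $f(G)$ and $H$ have the same (finite) number of vertices and the same number of edges. A subgraph of a finite graph $H$ with as many vertices and as many edges as $H$ must have $V(f(G)) = V(H)$ and $E(f(G)) = E(H)$, hence $f(G) = H$, and $f$ is a compaction. Alternatively, one may bypass the isomorphism hypothesis by checking surjectivity step by step: each elementary fold $G_i \to (G_i)_{xy}$ is both vertex- and edge-surjective onto its codomain --- the vertex $v_{xy}$ is the image of $x$, every other vertex is fixed, and each edge $v_{xy}w$ arises from an edge $xw$ or $yw$ --- and a composition of vertex- and edge-surjective homomorphisms is again surjective, so $f$ maps onto its final codomain $f(G)$.

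I do not expect a substantial obstacle, as the content is purely definitional. The one point requiring care is the passage from ``$f(G)$ is isomorphic to $H$'' to ``$f(G)$ equals $H$'', which relies on finiteness: an isomorphic copy of a finite graph that sits inside it as a subgraph must be the whole graph. Irreflexivity enters only to identify the non-loops of $E(H)$ with all of $E(H)$, so that edge-surjectivity onto $H$ is exactly the surjectivity onto non-loops demanded by the definition of compaction.
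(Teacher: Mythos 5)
Your proposal is correct and follows essentially the same route as the paper, whose entire proof is the one-line observation that by definition of a folding $f(G)=H$, making $f$ vertex- and edge-surjective (irreflexivity identifying non-loops with all edges). The only difference is that you carefully justify the passage from ``$f(G)$ isomorphic to $H$'' to ``$f(G)=H$'' via finiteness, a step the paper silently elides; this is a sound refinement rather than a different argument.
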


\begin{proof}
By definition of a folding, $f(G) = H$ making $f$ both vertex and edge surjective.
\end{proof}

In the proof that $C_{2k}$-compaction is NP-complete for all 
$k \geq 3$, Vikas proves the following:

\begin{thm}[\cite{vikas}]
\label{Keytheorem}
Let $G$ be a graph with an induced copy of $C_{2k}$, $k \geq 3$. 
Then there is a graph $G'$ such that the following are equivalent:

\begin{itemize}
\item{$G$ retracts to $C_{2k}$,}
\item{$G'$ retracts to $C_{2k}$,}
\item{$G'$ compacts to $C_{2k}$.}
\end{itemize}
\end{thm}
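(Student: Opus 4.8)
The plan is to prove the theorem as a gadget construction: given $G$ with a labelled induced copy $C = x_0 x_1 \cdots x_{2k-1} x_0$ of $C_{2k}$, I would build $G'$ by retaining $C$ inside $G$ and attaching to it an auxiliary graph $R$ whose sole purpose is to constrain the behaviour of any homomorphism $G' \to C_{2k}$ on the vertices of $C$. The three conditions would then be linked by a single cycle of implications: $G$ retracts $\Rightarrow$ $G'$ retracts $\Rightarrow$ $G'$ compacts $\Rightarrow$ $G$ retracts.

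Two of these implications are routine. The implication $G'$ retracts $\Rightarrow$ $G'$ compacts holds because a retraction of $G'$ to the induced copy $C_{2k}$ is the identity on $V(C_{2k})$, hence already surjective onto all $2k$ edges, which for irreflexive graphs is exactly a compaction (this is the same phenomenon as Observation~\ref{foldtocompact}). For $G$ retracts $\Rightarrow$ $G'$ retracts, I would take a retraction $r : G \to C_{2k}$ and extend it over $R$; this forces the gadget to be designed so that the identity map on $C$ always extends to a homomorphism of $R$ into $C_{2k}$. Call this property (P1).

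The heart of the argument is $G'$ compacts $\Rightarrow$ $G$ retracts, and it rests on a winding dichotomy for homomorphisms $\varphi : C_{2k} \to C_{2k}$. Writing the target as $\mathbb{Z}/2k$, each consecutive pair of images differs by $\pm 1$, so the total winding lies in $\{-1,0,1\}$; winding $\pm 1$ forces all steps to share a sign and hence $\varphi$ to be an automorphism, while winding $0$ confines the image to a proper arc of length at most $k$, which omits at least one edge. Thus $\varphi$ is edge-surjective if and only if it is an automorphism. I would then design $R$ so that whenever $c : G' \to C_{2k}$ is a homomorphism with $c|_C$ of winding $0$, the entire image $c(G')$ is trapped inside a proper arc of $C_{2k}$, contradicting edge-surjectivity of a compaction. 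Call this property (P2). Granting (P2), any compaction $c$ must restrict to an automorphism $\sigma$ of $C$, whence $\sigma^{-1} \circ (c|_G) : G \to C_{2k}$ is the identity on $C$ and is the desired retraction of $G$. Note (P1) is just the case $\sigma = \mathrm{id}$, so the two properties are compatible.

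The main obstacle is constructing one gadget $R$ satisfying (P1) and (P2) at the same time, because the two requirements pull in opposite directions: (P1) asks that the honest identity placement of $C$ extend freely, while (P2) asks for enough rigidity that a folded (winding $0$) placement of $C$ cannot be completed while still covering all $2k$ edges. The mechanism that reconciles them is the parity and girth structure of even cycles: attaching paths between prescribed vertices of $C$ lets me impose \emph{exact} distance constraints, since a homomorphism neither increases distance nor changes parity, and for $k \geq 3$ these constraints can pin the image of $C$ tightly enough to forbid winding $0$ while remaining satisfiable for an automorphism. Verifying that such an $R$ genuinely propagates the arc confinement from $C$ to all of $G'$, yet never obstructs an honest retraction, is the delicate, case-based core of the proof; the hypothesis $k \geq 3$ is precisely what keeps the cycle long enough for these distance gadgets to be non-degenerate.
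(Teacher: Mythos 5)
There is a genuine gap, and it sits exactly where you placed the whole weight of the argument: property (P2) as you have set it up is unachievable. You stipulate that the gadget $R$ is attached only to the distinguished copy of $C$, "whose sole purpose is to constrain the behaviour of any homomorphism $G' \to C_{2k}$ on the vertices of $C$." But a compaction must be edge-surjective as a map on all of $G'$, and the vertices of $G$ far from $C$ are completely unconstrained by such a gadget. Concretely, take $G$ to be the cycle $C$ itself with a pendant path of length $2k$ attached at one vertex. A homomorphism $c$ with $c|_C$ of winding $0$ maps $C$ into a proper arc, but the pendant path is free to walk all the way around the target cycle and cover every edge, so $c$ can be a compaction even though $c|_C$ is not an automorphism; no choice of $R$ attached to $C$ can forbid this, since the path's images are constrained only through its attachment vertex. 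Note also that you cannot instead make winding-$0$ placements of $C$ outright inextendible to $R$: any useful $G'$ here is bipartite (it must retract to $C_{2k}$ in the yes-case), and every bipartite graph maps homomorphically onto a single edge, which restricts to a winding-$0$ map on $C$. So winding-$0$ homomorphisms always exist; the construction must ensure they are never edge-surjective, and for that the gadget has to reach every vertex of $G$, not just $C$. This is in fact what Vikas's construction does, and the paper relies on that feature: it remarks that $G'$ is built from $G$ "by adding paths (whose lengths depend on $k$)" attached so that connectivity and bipartiteness of $G$ are inherited by $G'$ --- paths emanating from the vertices of $G$, tying each vertex's possible images to the image of $C$, which is precisely what traps the full image when the winding is $0$.

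Beyond this structural misdirection, the proposal defers the actual content of the theorem. Your winding dichotomy for homomorphisms $\varphi : C_{2k} \to C_{2k}$ (winding in $\{-1,0,1\}$, edge-surjective iff automorphism, winding $0$ confined to an arc of length at most $k$) is correct, and the two "routine" implications ($G$ retracts $\Rightarrow$ $G'$ retracts, and $G'$ retracts $\Rightarrow$ $G'$ compacts, the latter being the same observation as Observation~\ref{foldtocompact}) are fine. But those are the easy parts; the existence and verification of a gadget satisfying your (P1) and (P2) simultaneously --- which you explicitly label "the delicate, case-based core" and do not carry out --- is the theorem. Since the paper itself imports this statement from Vikas~\cite{vikas} without reproving it, the standard of comparison is Vikas's construction, and your plan both omits that construction and, as argued above, constrains it in a way ((gadget attached only to $C$)) under which no construction can succeed. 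Repairing the proof requires redesigning $R$ to attach calibrated paths to all vertices of $G$, and then (P1) becomes correspondingly harder: the retraction of $G$ must extend over paths whose endpoints receive arbitrary images, which is what forces the path lengths and parities to depend on $k$.
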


We make two comments about the above theorem.  First,
the graph $G'$ in the above theorem depends on the value of $k$. 
Secondly, using the following observation, we may restrict our attention 
in the \Hret{C_{2k}} problem to connected graphs $G$.

\begin{obs}\label{obs:connected}
Let $G$ be a graph containing a labelled copy of $C_{2k}$ as an induced subgraph of $G$. 
Then $G$ retracts to the copy of $C_{2k}$ if and only if the connected component containing 
$C_{2k}$ retracts to $C_{2k}$ and all connected components are bipartite.
\end{obs}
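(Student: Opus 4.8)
The plan is to prove the two directions separately, leaning on two elementary facts about the even cycle. First, $C_{2k}$ is bipartite, so any graph admitting a homomorphism to $C_{2k}$ is itself bipartite, since it can then contain no closed walk of odd length and hence no odd cycle. Second, $C_{2k}$ contains an edge, i.e.\ a copy of $K_2$, to which any bipartite graph maps homomorphically. I would also note at the outset that, since $C_{2k}$ is connected, the labelled copy of $C_{2k}$ lies entirely within a single connected component of $G$; call it $G_0$.

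For the forward direction, suppose $r\colon G \to C_{2k}$ is a retraction. Then $r$ is in particular a homomorphism into the bipartite graph $C_{2k}$, so $G$ has no odd cycle and hence every connected component of $G$ is bipartite. Restricting $r$ to $G_0$ yields a homomorphism $G_0 \to C_{2k}$ that fixes $V(C_{2k})$ pointwise (since $r$ does, and the copy of $C_{2k}$ lies in $G_0$), which is exactly a retraction of $G_0$ to $C_{2k}$. This establishes both necessary conditions.

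For the converse, suppose $G_0$ retracts to $C_{2k}$ via a retraction $r_0$ and every component of $G$ is bipartite. I would assemble a retraction $r$ of all of $G$ by setting $r = r_0$ on $G_0$ and, on each remaining component $G_i$, composing a proper $2$-colouring of $G_i$ with a map of $K_2$ onto a fixed edge of $C_{2k}$. Because distinct components share no edges, the resulting map is a homomorphism on all of $G$; and because the labelled copy of $C_{2k}$ sits inside $G_0$, the fixing condition $r(h)=h$ for $h \in V(C_{2k})$ is inherited directly from $r_0$. Hence $r$ is a retraction of $G$ to $C_{2k}$.

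I do not expect a genuine obstacle here: the argument is essentially bookkeeping that isolates the component containing $C_{2k}$ from the rest. The only step requiring care is the necessity of bipartiteness in the forward direction, which is precisely where the even length of the cycle is used; the remaining components are then handled uniformly, because $C_{2k}$, being bipartite and containing an edge, is a homomorphic target for any bipartite graph.
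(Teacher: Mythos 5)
Your proposal is correct and follows essentially the same route as the paper: the forward direction is the routine observation (which the paper dismisses as obvious, while you spell out the bipartiteness argument), and your converse—gluing the retraction of the component containing $C_{2k}$ with maps of the remaining bipartite components onto a fixed edge of $C_{2k}$—is exactly the paper's construction.
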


\begin{proof}
The necessary condition is obvious. Conversely, mapping all connected components not containing 
$C_{2k}$ to an edge of $C_{2k}$ (which is possible since they are bipartite) and using the 
retraction of the connected component containing $C_{2k}$ gives a retraction of $G$ to
the labelled copy of $C_{2k}$. 
\end{proof}

In Vikas's proof of Theorem~\ref{Keytheorem}, the graph $G'$ is constructed from 
$G$ by adding paths (whose lengths depend on $k$).  Thus, if $G$ is connected,
then we can conclude $G'$ is connected as well.  Further, the construction
will produce a bipartite graph $G'$ when $G$ is bipartite.
Therefore we can strengthen Vikas's result to the following:

\begin{thm}\label{thm:tfae}
Let $G$ be a connected graph with an induced copy of $C_{2k}$, $k \geq 3$. 
Then there is a connected graph $G'$ such that the following are equivalent:

\begin{itemize}
\item{$G$ retracts to $C_{2k}$,}

\item{$G'$ retracts to $C_{2k}$,}

\item{$G'$ folds to $C_{2k}$,}

\item{$G'$ compacts to $C_{2k}$.}
\end{itemize}
\end{thm}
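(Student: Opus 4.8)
The plan is to take the graph $G'$ furnished by Vikas's Theorem~\ref{Keytheorem}, check that it inherits connectedness from $G$, and then simply splice the folding condition into Vikas's existing chain of equivalences using Observations~\ref{retracttofold} and~\ref{foldtocompact}. No new combinatorial construction is needed; the work is entirely one of assembling tools already established in the excerpt.

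First I would invoke Theorem~\ref{Keytheorem} to obtain $G'$, and recall (as in the remark preceding the statement) that Vikas builds $G'$ from $G$ by attaching paths whose lengths depend only on $k$. Because these paths are attached to vertices of the connected graph $G$, the resulting $G'$ is again connected, and it still contains the induced copy of $C_{2k}$. Theorem~\ref{Keytheorem} then already gives that the three conditions ``$G$ retracts to $C_{2k}$'', ``$G'$ retracts to $C_{2k}$'', and ``$G'$ compacts to $C_{2k}$'' are equivalent, so it remains only to weave ``$G'$ folds to $C_{2k}$'' into this list.

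Next I would close a cyclic chain of implications among the $G'$-conditions. Since $G'$ is connected and $C_{2k}$ is an induced subgraph, Observation~\ref{retracttofold} yields that if $G'$ retracts to $C_{2k}$ then $G'$ folds to $C_{2k}$. Since $G'$ and $C_{2k}$ are irreflexive, Observation~\ref{foldtocompact} yields that if $G'$ folds to $C_{2k}$ then $G'$ compacts to $C_{2k}$. Combining these with Vikas's equivalence ``$G'$ compacts'' $\iff$ ``$G'$ retracts'' produces
\[
G' \text{ retracts to } C_{2k} \implies G' \text{ folds to } C_{2k} \implies G' \text{ compacts to } C_{2k} \implies G' \text{ retracts to } C_{2k},
\]
so all three $G'$-statements coincide, and ``$G$ retracts to $C_{2k}$'' is equivalent to them through Vikas. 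The only genuinely delicate point is confirming that $G'$ is connected, which rests solely on the path-attachment structure of Vikas's construction rather than on any fresh argument; once that is noted, the remainder is a direct application of the two observations, and I expect no real obstacle.
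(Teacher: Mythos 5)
Your proposal is correct and follows essentially the same route as the paper: the paper likewise notes (just before the theorem) that Vikas's path-attachment construction preserves connectedness, and then proves the theorem by the identical chain $G'$ retracts $\Rightarrow$ $G'$ folds (Observation~\ref{retracttofold}) $\Rightarrow$ $G'$ compacts (Observation~\ref{foldtocompact}), closing the cycle with Theorem~\ref{Keytheorem}. Your explicit remark that the induced copy of $C_{2k}$ survives in $G'$ is a small but welcome clarification that the paper leaves implicit.
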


\begin{proof}
First assume $G'$ retracts to $C_{2k}$.  By Observation~\ref{retracttofold}, 
since $G'$ retracts to $C_{2k}$ and $G'$ is connected, $G'$ folds to $C_{2k}$. 
Now assume that $G'$ folds to $C_{2k}$. Then by Observation~\ref{foldtocompact}, 
$G'$ compacts to $C_{2k}$. By Theorem~\ref{Keytheorem}, the result follows. 
\end{proof}

We now prove our main result for this section:

\begin{thm}
\label{conpcomplete}
For a fixed $k \geq 1$, \Hmix{C_{2k+1}} is co-NP-complete.
\end{thm}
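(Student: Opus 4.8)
The plan is to establish co-NP-completeness by proving two directions: membership in co-NP and co-NP-hardness. Membership is essentially already in hand: by the characterization in Theorem~\ref{badweight}, a graph $G$ fails to be $C_{2k+1}$-mixing precisely when there exists a $(2k+1,k)$-colouring $f$ and a cycle $C$ with $W(C,f) \neq \frac{|E(C)|}{2}(2k+1)$. Such a pair $(f,C)$ is a polynomial-size certificate that $G$ is a NO-instance of \Hmix{C_{2k+1}}, and it can be verified in polynomial time by computing the weight of $C$ under $f$. Hence \Hmix{C_{2k+1}} is in co-NP, as already recorded in the corollary following Theorem~\ref{badweight}.

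For co-NP-hardness, the plan is to reduce from the complement of \Hret{C_{4k+2}}, which by the cited results of Feder and of Dukes--Emerson--MacGillivray is NP-complete for $2k+1 \geq 3$, equivalently for all $k \geq 1$ once we note $C_{4k+2} = C_{2(2k+1)}$ has $2k+1 \geq 3$. Given an instance $G$ of \Hret{C_{4k+2}} (a connected graph with a labelled induced copy of $C_{4k+2}$), I would first invoke Observation~\ref{obs:connected} to assume $G$ is connected, then apply Theorem~\ref{thm:tfae} with $2k$ replaced by $2k+1$ (so the even cycle in that theorem is $C_{4k+2}$) to produce in polynomial time a connected graph $G'$ such that $G$ retracts to $C_{4k+2}$ if and only if $G'$ folds to $C_{4k+2}$. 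The graph $G'$ is the instance of \Hmix{C_{2k+1}} that the reduction outputs.

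The key logical step is then to chain the equivalences. By Theorem~\ref{oddcyclepinching}, a connected bipartite graph is \emph{not} $C_{2k+1}$-mixing if and only if it folds to $C_{4k+2}$. I must ensure $G'$ is bipartite: the construction of $G'$ in Vikas's theorem adds only paths, and since $C_{4k+2}$ is an even cycle the resulting graph is bipartite whenever $G$ is, and $G$ can be taken bipartite since retracting to a bipartite target forces bipartiteness of the relevant component (cf. Observation~\ref{obs:connected}). Combining these, $G'$ is not $C_{2k+1}$-mixing $\iff$ $G'$ folds to $C_{4k+2}$ $\iff$ $G'$ compacts to $C_{4k+2}$ $\iff$ $G$ retracts to $C_{4k+2}$. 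Thus $G$ is a YES-instance of \Hret{C_{4k+2}} if and only if $G'$ is a NO-instance of \Hmix{C_{2k+1}}, giving a polynomial-time reduction from \Hret{C_{4k+2}} to the complement of \Hmix{C_{2k+1}}, which certifies co-NP-hardness.

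The main obstacle I anticipate is bookkeeping the small-$k$ edge cases and the parity/index shift. Theorem~\ref{thm:tfae} is stated for $C_{2k}$ with $k \geq 3$, whereas here the relevant even cycle is $C_{4k+2}$, so I must substitute carefully so that the hypothesis $k \geq 3$ of the retraction-hardness result becomes a condition on $4k+2$, namely $4k+2 \geq 6$, i.e. $k \geq 1$, which matches the claimed range. I would also handle $k=1$ separately if needed, since $C_{2k+1} = C_3 = K_3$ and the result must recover the known co-NP-completeness of \Hmix{3} from~\cite{Mixing3Col}; verifying that \Hret{C_{4}}-based hardness or the $K_3$ case is consistent (the retraction hardness for $C_{2k}$ requires $k \geq 3$, i.e. cycles of length at least $6$, which $C_{4k+2} = C_6$ satisfies exactly when $k=1$) is the delicate point that makes the full range $k \geq 1$ work rather than only $k \geq 2$.
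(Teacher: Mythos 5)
Your proposal is correct and follows essentially the same route as the paper: co-NP membership via the certificate from Theorem~\ref{badweight}, and hardness by reducing from \Hret{C_{4k+2}} using Observation~\ref{obs:connected} and Theorem~\ref{thm:tfae} (with the even cycle $C_{4k+2}$, whose half-length $2k+1 \geq 3$ covers all $k \geq 1$), then chaining through the folding characterization of Theorem~\ref{oddcyclepinching}. Your careful check that the $k=1$ case lands exactly on $C_6$, the shortest cycle for which retraction hardness holds, matches the paper's implicit bookkeeping, and no separate treatment of $k=1$ is needed.
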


\begin{proof}
Let $G$ be an instance of \Hret{C_{4k+2}}. Then we may assume $G$ is bipartite. 
Further by Observation~\ref{obs:connected}, we may assume $G$ is connected.  
By Theorem~\ref{thm:tfae}, there is a connected, bipartite graph $G'$ that folds 
to $C_{4k+2}$ if and only if $G$ retracts to $C_{4k+2}$.  Thus, by Theorem~\ref{oddcyclepinching} 
we have $G'$ is no instance of \Hmix{C_{4k+2}} if and only if $G$ is a yes instance of
\Hret{C_{4k+2}}.  The result follows.
\end{proof}


\section{$(p,q)$-mixing in Planar Graphs}
\label{planarsection}
Now we turn our attention to $(p,q)$-mixing in planar graphs. In~\cite{Mixing3Col}, 
Cereceda et al.\ showed that there is a polynomial time algorithm to determine 
if a graph is $3$-mixing when restricted to planar (bipartite) graphs. 
Here we extend the result by giving a polynomial time algorithm for determining if
a bipartite planar graph is $(p,q)$-mixing for 
$3 \leq \frac{p}{q} < 4$.  In~\cite{Mixing3Col}, the key ideas for the polynomial algorithm are as follows.  For $k=3$, $C_4$ is $3$-mixing and $C_6$ is not.  Let $G$ be a $2$-connected, bipartite graph with a planar embedding.  If $G$ has a separating $C_4$, then $G$ can be decomposed into two subgraphs such that $G$ is $3$-mixing if and only if both subgraphs are.  If $G$ has no separating $C_4$, then $G$ is $3$-mixing if and only if $G$ has at most one face of length at least $6$.  In extending the results to general $2 < \frac{p}{q} < 4$, we need to both understand what a large face is and how to decompose using short separating cycles. 

Our following structural result characterizes $(p,q)$-mixing for bipartite graphs with planar embeddings not containing short separating cycles.  As in~\cite{Mixing3Col}, the polynomial algorithm follows from the characterization together with a reduction of the general case to the case without short separating cycles.  
Our structural result applies to all $2 < \frac{p}{q} < 4$ without small separating cycles, but the reduction to remove separating cycles as in~\cite{Mixing3Col}, requires $3 \leq \frac{p}{q} < 4$ where the short separating cycles are precisely $C_4$.  Hence our algorithm applies only to $3 \leq \frac{p}{q} < 4$.

As a notation convention, in this section given an embedding of a planar graph $G$ we will use $G$ to denote both the graph and its embedding.

\begin{thm}\label{thm:bipartiteplanarmixing}
Fix $2 < \frac{p}{q} < 4$. Let $C_{2k}$ be the minimal non-$(p,q)$-mixing even cycle. Let $G$ be a $2$-connected bipartite planar graph.  Suppose $G$ has a planar embedding with no separating $C_{2i}$-cycles for $i \in \{2,\ldots,k-1\}$. Then $G$ is $(p,q)$-mixing if and only if $G$ has at most one face with length greater than or equal to $2k$. 
\end{thm}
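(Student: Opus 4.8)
The plan is to reduce the statement to a condition on the faces of the embedding and then treat the two directions separately. For a cycle $C$ and a $(p,q)$-colouring $f$, write $\delta_f(C) = W(C,f) - \tfrac{|E(C)|}{2}p$, so that $C$ is wrapped under $f$ precisely when $\delta_f(C) \neq 0$. The first step is to observe that $\delta_f$ is additive over sums of cycles: if $C$ is the sum of cycles $C_1$ and $C_2$ meeting in a common path $P$, then the cancellation from Observation~\ref{obs:negedge} (exactly as in the proof of Lemma~\ref{chordlesscycle}) gives $W(C,f) = W(C_1,f)+W(C_2,f)-|E(P)|p$ and $|E(C)| = |E(C_1)|+|E(C_2)|-2|E(P)|$, whence $\delta_f(C) = \delta_f(C_1)+\delta_f(C_2)$. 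Since in a $2$-connected planar embedding every cycle is a sum of bounded face boundaries, $\delta_f$ is determined on the whole cycle space by its values on the faces. Consequently, by Theorem~\ref{badweight}, $G$ is $(p,q)$-mixing if and only if $\delta_f(F)=0$ for every face $F$ and every $(p,q)$-colouring $f$; that is, mixing is equivalent to no face ever being wrapped.

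Next I would classify the faces by length (all even, as $G$ is bipartite). A face of length less than $2k$ bounds a cycle $C_{2i}$ with $i<k$, which is $(p,q)$-mixing by the minimality of $C_{2k}$, so such a face is never wrapped. A face of length at least $2k$ folds to $C_{2k}$ by Observation~\ref{cyclefolds}, hence is not $(p,q)$-mixing by Lemma~\ref{foldsto}, so it admits a wrapped colouring as an isolated cycle. Finally, orienting all face boundaries consistently, the boundaries sum to zero in the cycle space, so $\sum_F \delta_f(F)=0$, the value for the outer face being minus the sum over the bounded faces. For the ``if'' direction this already suffices: if $G$ has at most one face of length at least $2k$, every short face has $\delta_f=0$ by the classification, and the relation $\sum_F \delta_f(F)=0$ forces the value on the unique long face (if any) to vanish as well. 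Thus no face is wrapped under any $f$, and by the first step $G$ is $(p,q)$-mixing. Note this direction does not use the hypothesis on separating cycles.

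The ``only if'' direction is where the work lies and where the separating-cycle hypothesis enters. Assuming $G$ has two faces $F_1,F_2$ of length at least $2k$, I would construct a $(p,q)$-colouring $f$ with $\delta_f(F_1)=p$, $\delta_f(F_2)=-p$, and $\delta_f(F)=0$ on every other face; by the first step this certifies that $G$ is not $(p,q)$-mixing. Starting from the base colouring assigning $0$ to one side of the bipartition and $q$ to the other (under which every face is unwrapped), the idea is to inject one unit of wind along a dual path from $F_1$ to $F_2$, altering only the winds of $F_1$ and $F_2$; both faces are long enough to absorb the change while keeping every edge weight in the admissible range $\{q,\dots,p-q\}$.

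I expect the realisation of this injection as a genuine $(p,q)$-colouring to be the main obstacle. This is exactly where the absence of separating cycles $C_{2i}$ with $2\le i\le k-1$ is essential: by additivity, a short cycle separating $F_1$ from $F_2$ would be forced to carry the injected wind and hence be wrapped, contradicting that a cycle shorter than $C_{2k}$ is $(p,q)$-mixing. I would therefore choose the dual path to avoid any such short separating cycle and then verify admissibility of the resulting edge weights face by face, using the length bounds on $F_1$ and $F_2$ to place the extra wind without violating the circular constraint. The delicate point to get right is showing such a wind-free-to-wound modification always exists under the stated hypothesis, rather than merely that the target wind vector is consistent.
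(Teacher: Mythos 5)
Your ``if'' direction is correct and takes a genuinely different route from the paper. The paper proves it (Lemma~\ref{lem:atmostonelongface}) by a delicate edge-minimal-counterexample induction, using clique cutsets (Observation~\ref{nocliquecutsets2}) and a facial-surgery claim to delete a path on the boundary of a short face. You instead note that $\widetilde{W}(uv):=W(uv,f)-p/2$ is antisymmetric under edge reversal by Observation~\ref{obs:negedge}, so $\delta_f$ is additive over the cycle space, which in a $2$-connected plane graph is generated by the bounded face boundaries; a face of length less than $2k$ can never be wrapped (restricting $f$ to its boundary would exhibit, via Theorem~\ref{badweight}, a non-mixing even cycle shorter than $C_{2k}$, contradicting minimality); and the consistently oriented face boundaries sum to zero, so a single long face cannot carry nonzero $\delta_f$ on its own. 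Hence every cycle is unwrapped under every colouring and Theorem~\ref{badweight} gives mixing. This is slicker and more conceptual than the paper's argument, and, as you observe (the paper notes this too), this direction needs no hypothesis on separating cycles.

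The ``only if'' direction, however, contains a genuine gap, which you yourself flag: the entire content of that direction is the construction of a $(p,q)$-colouring $f$ with $\delta_f(F_1)=p$, $\delta_f(F_2)=-p$, and $\delta_f=0$ on all other faces, and you never carry it out. Consistency of the target wind vector (which is all a ``dual path'' gives you) does not imply realizability by admissible edge weights: one must exhibit antisymmetric weights $W'(uv)\in\{q,\dots,p-q\}$ with the prescribed face sums and then integrate them to a colouring. Making your plan rigorous is a nontrivial feasibility argument: in the planar dual it is an integral circulation problem with capacity $p/2-q$ on each $|\widetilde{W}|$, demands $\pm p$ at $F_1,F_2$, where the dual cuts separating $F_1$ from $F_2$ correspond to primal cycles enclosing exactly one of the two faces; the no-short-separating-cycle hypothesis together with the $\geq 2k$ bounds on the two face boundaries gives every such cycle length at least $2k$, and minimality of $C_{2k}$ gives $k(p-2q)\geq p$, so the min-cut bound holds --- but one must still invoke max-flow/min-cut with integrality, and for odd $p$ handle the parity constraint that $\widetilde{W}$ is half-odd-integral on every edge (e.g.\ by augmenting the base bipartition colouring by an even-valued flow). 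None of this appears in your write-up, and your remark that a short separating cycle ``would be forced to carry the injected wind'' only shows the hypothesis is necessary for your construction, not that the construction exists.

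For comparison, the paper avoids this analytic route entirely: Lemma~\ref{lem:atleasttwolongfaces} argues by induction via folds, folding an exterior neighbour onto the boundary of a long face; when the fold creates a short $f$-separating cycle, it shows a cycle $D$ of length exactly $2k$ arises in $G$ whose interior retracts to $D$ by the shortest-path retraction (Lemma~\ref{shortestpath}), yielding a smaller graph still having two $\geq 2k$-faces; the base case is a cycle, which folds to $C_{2k}$, and Lemma~\ref{foldsto} pulls non-mixing back to $G$. To complete your proof you must either work out the flow-feasibility argument sketched above or fall back on such a folding induction.
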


Our first point of order is to show that the minimal even cycle which is not $(p,q)$-mixing is well defined.

\begin{obs}
Fix integers $p$ and $q$ such that $2 < \frac{p}{q} < 4$. If $p$ is even, then for all even integers $j \geq p$,  $C_{j}$ is not $(p,q)$-mixing. If $p$ is odd, then for all even $j \geq 2p$,  $C_{j}$ is not $(p,q)$-mixing.
\end{obs}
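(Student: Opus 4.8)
The plan is to invoke Theorem~\ref{badweight}: since a cycle graph contains exactly one cycle, namely itself, $C_j$ fails to be $(p,q)$-mixing precisely when there is a $(p,q)$-colouring $f$ of $C_j$ with $W(C_j,f) \neq \frac{j}{2}p$, that is, with wind $w_f(C_j) \neq \frac{j}{2}$. So the entire task reduces to exhibiting, for the stated values of $j$, a single $(p,q)$-colouring of $C_j$ whose wind avoids $\frac{j}{2}$.

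The construction I would use prescribes the edge weights directly. Writing $C_j = c_0, c_1, \dots, c_{j-1}, c_0$, I pick integers $w_1, \dots, w_j \in \{q, q+1, \dots, p-q\}$, set $f(c_0)=0$, and define $f(c_i) = (w_1 + \dots + w_i) \bmod p$. Each weight $w_i$ in the interval $[q, p-q]$ produces a legal $(p,q)$-edge, since its circular distance $\min(w_i, p-w_i)$ lies in $[q,p-q]$ (using $p>2q$); and as long as $\sum_{i=1}^{j} w_i \equiv 0 \pmod p$ the colouring closes up consistently around the cycle, so $f$ is a genuine $(p,q)$-colouring with $W(C_j,f) = \sum_{i=1}^j w_i$ and wind $(\sum_i w_i)/p$. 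Because the weights range independently over the contiguous integer interval $[q,p-q]$, the attainable values of $\sum_i w_i$ are exactly the integers in $[jq, j(p-q)]$, so the attainable winds are exactly the integers $m$ satisfying $\frac{jq}{p} \le m \le \frac{j(p-q)}{p}$.

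The key observation is that this interval of attainable winds is centred at $\frac{j}{2}$ with half-length $\frac{j(p-2q)}{2p}$. I would therefore aim for the wind $m = \frac{j}{2}+1$, which is an integer since $j$ is even, and which is attainable precisely when the half-length is at least $1$, i.e.\ when $j \geq \frac{2p}{p-2q}$. The two cases of the statement then separate according to the parity of $p$. Since $\frac{p}{q} > 2$ and $p,q$ are integers, $p - 2q \geq 1$ always; if in addition $p$ is even then $p - 2q$ is even and hence $p - 2q \geq 2$. Consequently $\frac{2p}{p-2q} \le p$ when $p$ is even and $\frac{2p}{p-2q} \le 2p$ when $p$ is odd, matching the thresholds $j \geq p$ and $j \geq 2p$ respectively. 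For any such $j$, the colouring realizing wind $\frac{j}{2}+1$ witnesses through Theorem~\ref{badweight} that $C_j$ is not $(p,q)$-mixing.

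I do not anticipate a serious obstacle; the one point I would be most careful about is the parity refinement $p - 2q \geq 2$ for even $p$, as this is exactly what allows the weaker hypothesis $j \geq p$ to suffice in the even case while the odd case needs $j \geq 2p$. I would also verify the lower endpoint $\frac{jq}{p} \le \frac{j}{2}+1$, which is immediate from $p>2q$, so that $m = \frac{j}{2}+1$ genuinely lies inside the attainable range and not merely below its upper endpoint.
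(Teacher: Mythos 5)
Your argument is correct, but it takes a genuinely different route from the paper's. The paper proves the observation by exhibiting a single explicit colouring of the smallest relevant cycle---$f(c_{i}) = iq \bmod{p}$ on $C_{p}$ when $p$ is even, and its doubled analogue on $C_{2p}$ when $p$ is odd---whose weight is $qp$ (resp.\ $2qp$), hence wind $q < \frac{p}{2}$ (resp.\ $2q < p$), and then extends to all larger even $j$ by folding: $C_{j}$ folds onto the shorter even cycle (Observation~\ref{cyclefolds}) and non-mixing pulls back along folds (Lemma~\ref{foldsto}). You instead handle every admissible $j$ directly by prescribing edge weights in $\{q, \dots, p-q\}$ whose sum is a multiple of $p$, and your bookkeeping is sound: such weights do give legal edges, the closing condition $\sum_i w_i \equiv 0 \pmod{p}$ is the right one, the attainable winds are exactly the integers $m$ with $\frac{jq}{p} \leq m \leq \frac{j(p-q)}{p}$, and wind $\frac{j}{2}+1$ is realized precisely when $j \geq \frac{2p}{p-2q}$, with the parity refinement $p - 2q \geq 2$ for even $p$ correctly producing the two thresholds $j \geq p$ and $j \geq 2p$. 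Comparing the two: the paper's proof is shorter and reuses folding machinery it develops anyway, while yours is self-contained (no appeal to Lemma~\ref{foldsto}) and, since \emph{every} $(p,q)$-colouring of $C_{j}$ arises from such a weight sequence, your interval analysis in fact yields the sharper statement that $C_{j}$ fails to be $(p,q)$-mixing exactly when $j \geq \frac{2p}{p-2q}$---which identifies the minimal non-mixing even cycle $C_{2k}$ that Section~\ref{planarsection} otherwise locates only by checking finitely many cycles.
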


\begin{proof}
First assume that $p$ is even. Let $C_{p} = c_{0},\ldots,c_{p-1},c_{0}$ and consider the 
$(p,q)$-colouring $f(c_{i}) = iq \bmod{p}$, $i=0, 1, \dots, p-1$. Then $W(C,f) = qp < \frac{p}{2}p$ as $\frac{p}{q} >2$. Hence $C_{p}$ is not $(p,q)$-mixing. It follows by Lemma~\ref{foldsto} that when $p$ is even, for all even $j  \geq p$, $C_{j}$ is not $(p,q)$-mixing.

Now assume that $p$ is odd. Let $C_{2p} = c_{0},\ldots,c_{2p-1},c_{0}$. For $i = 0, 1, \ldots,p-1$, colour $c_{i}$ and $c_{i+p}$ with $iq \bmod p$. Again, $W(C,f) = 2qp < p^{2}$, and thus $C_{2p}$ is not $(p,q)$-mixing. Hence, for all even $j \geq 2p$, $C_{j}$ is not $(p,q)$-mixing. 
\end{proof}

It follows that the minimal even cycle which is not $(p,q)$-mixing is well defined. In fact, the minimum can be found in fixed time (for fixed $p$ and $q$), as we only have a fixed number of cycles which we need to check and we can check if a single cycle is $(p,q)$-mixing in fixed time. 
 
First, we reduce the problem \Hmix{G_{p,q}} problem to $2$-connected graphs. 
\begin{obs}
\label{blockreduction}
Fix integers $p$ and $q$ such that $2 < \frac{p}{q} < 4$.  Let $G$ be a graph with a $(p,q)$-colouring.  Then $G$ is $(p,q)$-mixing if and only if every block of $G$ is $(p,q)$-mixing.
\end{obs}

\begin{proof}
Suppose that a block $B$ of $G$ is not $(p,q)$-mixing. Then there is a $(p,q)$-colouring $f$ of $B$ such that some cycle $C$ in $B$ is wrapped under $f$. Observe that we can extend $f$ to a $(p,q)$-colouring of $G$ by taking $(p,q)$-colourings of each block (which exist as $G$ has a $(p,q)$-colouring), and permuting colours if necessary. 

Conversely, suppose that $G$ is not $(p,q)$-mixing. Then there is a $(p,q)$-colouring $f$ of $G$ and a cycle $C$ in $G$ which is wrapped under $f$. As a cycle is $2$-connected, $C$ lies in some block $B$. Then $f$ restricted to $B$ is a $(p,q)$-colouring which has a wrapped cycle, and hence $B$ is not $(p,q)$-mixing. 
\end{proof}

Therefore, we can assume for the remainder of this section that all graphs are $2$-connected. Recall that if a planar graph is $2$-connected, then every face is bounded by a cycle.  The next observation allows us to assume $G$ does not have a clique cutset of size two.

\begin{obs}
\label{nocliquecutsets2}
Fix integers $p$ and $q$ such that $2 < \frac{p}{q} < 4$. Let $G$ be a 2-connected graph, and suppose $uv \in E(G)$ such that $G \setminus \{u, v\}$ is disconnected with components $T_1, T_2, \dots, T_k$. Let $G_{1}$ and $G_{2}$ be induced subgraphs of $G$ such that $V(G_{1}) = V(T_1) \cup \{u,v\}$,  $V(G_{2}) = V(T_2) \cup \cdots \cup V(T_k) \cup \{u,v\}$, and $E(G_{1}) \cup E(G_{2}) = E(G)$. If $G_{1}$ and $G_{2}$ are $(p,q)$-mixing, then $G$ is $(p,q)$-mixing.
\end{obs}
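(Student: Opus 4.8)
The plan is to invoke the winding characterization of mixing, Theorem~\ref{badweight}: to prove $G$ is $(p,q)$-mixing it suffices to show that for every $(p,q)$-colouring $f$ of $G$ and every cycle $C$ of $G$ one has $W(C,f) = \frac{|E(C)|}{2}p$, i.e.\ no cycle is wrapped. So I would fix an arbitrary $(p,q)$-colouring $f$ of $G$ and an arbitrary cycle $C$, and argue by case analysis on how $C$ meets the cutset $\{u,v\}$.

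If $C$ uses no vertex of $T_1$, then $C$ lies entirely in $G_2$; restricting $f$ to $V(G_2)$ gives a $(p,q)$-colouring of $G_2$, and since $G_2$ is $(p,q)$-mixing, Theorem~\ref{badweight} guarantees $C$ is not wrapped. Symmetrically, if $C$ avoids $T_2 \cup \cdots \cup T_k$ it lies in $G_1$ and is again not wrapped. The remaining case is when $C$ visits a vertex of $T_1$ and a vertex of some $T_j$ with $j \ge 2$. Here I would use that $\{u,v\}$ separates these components: as $C$ is a cycle it meets each vertex at most once, and a path avoiding $\{u,v\}$ stays inside a single component, so to reach two different components $C$ must pass through both $u$ and $v$. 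Thus $u$ and $v$ split $C$ into exactly two arcs, a path $P_1$ whose internal vertices lie in $T_1$ and a path $P_2$ whose internal vertices lie in $T_2 \cup \cdots \cup T_k$; hence $P_1$ is a path of $G_1$, $P_2$ a path of $G_2$, and $uv \notin E(C)$.

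For this crossing case, the clean approach is to treat the edge $uv$ as a chord of $C$ and apply Lemma~\ref{chordlesscycle} with the length-one path $P = u,v$, which has no internal vertices so its hypotheses hold vacuously. The two cycles it produces are exactly $C' = P_1 + uv$ and $C'' = P_2 + uv$, which are cycles of $G_1$ and $G_2$ respectively. Since $G_1$ and $G_2$ are $(p,q)$-mixing, Theorem~\ref{badweight} shows neither $C'$ nor $C''$ is wrapped under $f$; the contrapositive of Lemma~\ref{chordlesscycle} then yields that $C$ is not wrapped, as required. Alternatively one could avoid citing Lemma~\ref{chordlesscycle} and compute directly: writing $a = W(P_1,f)$ and $c = W(P_2,f)$ (both oriented from $u$ to $v$) and $e = W(uv,f)$, Observation~\ref{obs:negedge} gives $W(C',f) = a + (p-e)$, $W(C'',f) = c + (p-e)$ and $W(C,f) = a + \big(|E(P_2)|p - c\big)$; substituting the non-wrapped values of $W(C',f)$ and $W(C'',f)$ collapses $W(C,f)$ to $\frac{|E(C)|}{2}p$.

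The main obstacle I anticipate is the structural step in the crossing case: justifying that a cycle meeting both sides of the cut must traverse both $u$ and $v$, and that its two arcs then lie cleanly in $G_1$ and $G_2$. Once this is established, the winding bookkeeping (whether done via Lemma~\ref{chordlesscycle} or by hand) is routine, and the bipartiteness of $G_1$ and $G_2$ ensures that all the relevant cycle lengths are even, so the target value $\frac{|E(C)|}{2}p$ is an honest multiple of $p$.
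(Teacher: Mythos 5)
Your proposal is correct and follows essentially the same route as the paper's proof: both reduce to the winding characterization (Theorem~\ref{badweight}), observe that a wrapped (or potentially wrapped) cycle crossing the cutset must pass through both $u$ and $v$ with $uv \notin E(C)$, and then apply Lemma~\ref{chordlesscycle} with the chord $uv$ to split $C$ into the two cycles $C_1 + uv \subseteq G_1$ and $C_2 + uv \subseteq G_2$, invoking the mixing of $G_1$ and $G_2$. The only differences are presentational (you argue contrapositively where the paper argues by contradiction, and you spell out the crossing analysis and an optional direct weight computation in more detail), so no substantive gap exists.
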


\begin{proof}
Suppose to the contrary $G$ is not $(p,q)$-mixing. Then there is a $(p,q)$-colouring $f$ of $G$ and a cycle $C$ such that $C$ is wrapped under $f$. As $G_{1}$ and $G_{2}$ are $(p,q)$-mixing, this implies that $V(C) \not \subseteq V(G_{1})$ and $V(C) \not \subseteq V(G_{2})$. In particular, $u,v \in V(C)$ and $uv \not \in E(C)$. Let $C_{1}$ be the $(u,v)$-path of $C$ in $G_{1}$, and $C_{2}$ be the $(v,u)$-path of $C$ in $G_{2}$. Then $C_{1} +uv$ and $C_{2} + vu$ are cycles, and by Lemma~\ref{chordlesscycle} at least one of them is wrapped under $f$. But this contradicts the assumption that $G_{1}$ and $G_{2}$ are $(p,q)$-mixing.
\end{proof}

We now prove the necessity of the condition in Theorem~\ref{thm:bipartiteplanarmixing}.  Note we for this direction the absence of small separating cycles is not required.
We say a \textit{$\geq k$-face} is a face whose boundary has at least $k$ edges.

\begin{lem}\label{lem:atmostonelongface}
Fix $2 < \frac{p}{q} <4$. Let $C_{2k}$ be the minimal even cycle which is not $(p,q)$-mixing. Let $G$ be a $2$-connected planar bipartite graph. Suppose $G$ has a planar embedding in which there is at most one $\geq 2k$-face. Then $G$ is $(p,q)$-mixing. 
\end{lem}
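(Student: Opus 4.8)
The plan is to invoke Theorem~\ref{badweight}, which reduces the goal to showing that under every $(p,q)$-colouring $f$ of $G$, no cycle of $G$ is wrapped. So I would fix such an $f$ and, for a cycle $C$, write $\delta(C) = W(C,f) - \frac{|E(C)|}{2}p$, so that $C$ is wrapped precisely when $\delta(C) \neq 0$. The computation inside the proof of Lemma~\ref{chordlesscycle} shows that when a cycle $C$ is split by a chordal path into $C'$ and $C''$ one has $\delta(C) = \delta(C') + \delta(C'')$; iterating this, I would establish the additivity statement that for every cycle $C$ in the fixed planar embedding, $\delta(C) = \sum_{F} \delta(\partial F)$, where $F$ ranges over the faces enclosed by $C$, oriented consistently with $C$. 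Granting this, it suffices to prove that every face of $G$ satisfies $\delta(\partial F) = 0$, for then every cycle has $\delta = 0$, no cycle is wrapped, and $G$ is $(p,q)$-mixing by Theorem~\ref{badweight}.

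Two ingredients then finish the argument. First, since $G$ is bipartite and $2$-connected, every face boundary is a cycle of even length $2j$; if $2j < 2k$, then $C_{2j}$ is $(p,q)$-mixing by the minimality of $2k$, and applying Theorem~\ref{badweight} to the cycle $C_{2j}$ with the restriction of $f$ to $\partial F$ forces $\delta(\partial F) = 0$. Hence every face of length strictly less than $2k$ is non-wrapped. Second, I would show the total defect over all faces vanishes: orienting each face boundary so that the face lies to its left, every edge $xy$ is traversed once as $xy$ and once as $yx$ by the two faces it bounds, so by Observation~\ref{obs:negedge} its two contributions sum to $p$. Summing over all faces gives $\sum_F W(\partial F, f) = p\,|E(G)|$, while $\sum_F \frac{|E(\partial F)|}{2}p = p\,|E(G)|$ since each edge lies on exactly two faces; subtracting yields $\sum_F \delta(\partial F) = 0$.

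To conclude, the hypothesis supplies at most one face of length $\geq 2k$. All faces of length $< 2k$ have zero defect by the first ingredient, so the vanishing of the total defect forces the unique remaining long face (if it exists) to have zero defect as well. Thus every face is non-wrapped, and by additivity every cycle of $G$ is non-wrapped, giving the result. I expect the main obstacle to be the additivity statement itself: one must argue that an arbitrary cycle in the embedding can be written as a nested sum of its enclosed face boundaries so that Lemma~\ref{chordlesscycle} applies at each step, with orientations tracked correctly. This can be carried out by induction on the number of enclosed faces, peeling off a boundary face along a shared subpath, though some care is needed when a face meets the current boundary in more than one arc. Alternatively, one can encode $2\delta(C)/p$ as the sum over the oriented cycle $C$ of the antisymmetric edge function recording whether $f$ increases or decreases across each oriented edge; being antisymmetric, this function is a linear functional on the cycle space, which makes both the additivity and the total-defect computation immediate.
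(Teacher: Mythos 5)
Your proof is correct, and it takes a genuinely different route from the paper's. The paper argues by edge-minimal counterexample and structural surgery: a chord of the outer facial cycle yields a cutset of size two handled by Observation~\ref{nocliquecutsets2}; otherwise it locates (Claim~\ref{consecutiveclaim}) a face $f'$ whose boundary meets the outer facial cycle in a path, deletes that path's interior, rules out a cut vertex in the reduced graph, and transfers the wind condition back through the cycle $C \triangle C'$ via Lemma~\ref{chordlesscycle}. You instead argue globally for one fixed colouring: the defect $\delta(C) = W(C,f) - \frac{|E(C)|}{2}p$ is the evaluation on the oriented cycle $C$ of the antisymmetric edge function $\omega(uv) = W(uv,f) - \frac{p}{2}$ (antisymmetric by Observation~\ref{obs:negedge}(a)), hence a linear functional on the cycle space, so $\delta(C) = \sum_F \delta(\partial F)$ over the enclosed faces; your ``alternative'' cycle-space formulation is the right one to adopt, since the iterated peeling of Lemma~\ref{chordlesscycle} is exactly where faces meeting the current cycle in several arcs cause trouble, and the signed-sum identity for plane graphs avoids this entirely. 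Your two ingredients are sound: $2$-connectivity guarantees every face boundary is a cycle and every edge lies on two distinct faces, so restricting $f$ to a face of length $2j < 2k$ and applying Theorem~\ref{badweight} to the $(p,q)$-mixing cycle $C_{2j}$ (minimality of $2k$, with $C_4$ mixing by Observation~\ref{4cyclelemma}) gives zero defect, while the double count $\sum_F W(\partial F, f) = p\,|E(G)| = \sum_F \frac{|E(\partial F)|}{2}p$ forces the unique long face, bounded or outer, to have zero defect as well. What each approach buys: yours is shorter, requires no minimal counterexample and no connectivity case analysis, at the cost of importing the standard fact that an oriented cycle in a plane graph is the signed sum of its enclosed face boundaries; the paper's surgery stays entirely within tools it has already developed for the rest of Section~\ref{planarsection}. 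One cosmetic fix: describing $\omega$ as recording whether $f$ ``increases or decreases'' fits the $3$-colouring picture, where edge weights take only two values, but for general $(p,q)$ the sign alone is not additive --- you should take the quantitative $\omega(uv) = W(uv,f) - \frac{p}{2}$ (or any rescaling such as $\frac{2}{p}W(uv,f) - 1$); the linearity argument is unaffected.
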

 
 \begin{proof}
Let $G$ be an edge minimal counterexample to the claim. If $G$ is a cycle, then the inner face and outer face of the cycle are both faces of length equal to that of the cycle.  Hence, the cycle has length strictly less than $2k$ (there is at most one $\geq 2k$-face), and $G$ is $(p,q)$-mixing.  Thus, we assume that $G$ is not isomorphic to a cycle.

If one exists, let $f$ be the $\geq 2k$-face, otherwise let $f$ be an arbitrary face, and let $C = v_{0},\ldots,v_{t-1},v_{0}$ be the facial cycle of $f$.  Without loss of generality, let $f$ be the outer face. Thus, all faces in the interior of $C$ have length less than $2k$. If $C$ has a chord, then this chord lies in the interior of $C$, and we have  a clique cutset of size two which decomposes $G$ into two graphs $G_{1}$ and $G_{2}$, each of which only has at most one $\geq 2k$-face, since all interior faces have length strictly less than $2k$. By minimality both $G_{1}$ and $G_{2}$ are $(p,q)$-mixing, which by Observation~\ref{nocliquecutsets2} implies $G$ is $(p,q)$-mixing.

\begin{claim}
\label{consecutiveclaim}
There exists a face $f'$, where $f'$ has a facial cycle $C'$ such that $V(C') \cap V(C)$ induces a path of length at least $1$. 
\end{claim}

\begin{proof}\claimproof

Pick an arbitrary edge $e= v_{j}v_{j+1} \in E(C)$. Let $f'$ and $f$ be the two distinct faces whose boundaries contain $e$. Let $C'$ be the facial cycle of $f'$. 
We now examine the induced subgraph $G[V(C) \cap V(C')]$.  If it contains an edge not in $C$ (but whose endpoints are necessarily in $C$), then $C$ has a chord.  Thus, all edges belong to $C$.  Moreover, the subgraph cannot be all of $C$, otherwise $f'$ is the interior of $C$ and $G$ is a cycle. We may assume that $G[V(C) \cap V(C')]$ is a union of paths.  If it is a single path, then we are done, so assume there is more than one path.  Let $P$ be the component containing $v_{j}$ and let $v$ be an endpoint of $P$.  Let $Q'$ be the path in $C'$ starting at $v$ and ending at a vertex $u \in V(C)$ such that $V(Q') \cap V(P) = \{v\}$, and all internal vertices of $Q'$ are not in $V(C)$. Similarly, let $Q$ be the path in $C$ from $u$ to $v$ whose internal vertices are not in $V(C')$.  In particular, $Q$ and $P$ share only the vertex $v$.  Now $Q' \cup Q$ is a cycle $C''$ intersecting $C$ in the path $Q$.  If $C''$ is facial, then we are done. Otherwise, $Q$ shares an edge with $f$ and a face $f''$ in the interior of $C''$.  We repeat the argument.  As $G$ is finite, the result follows.

\end{proof}

Let $e$ be on the boundary of a face $f'$, with facial cycle $C'$, such that $S = G[V(C) \cap V(C')]$ is a path.  Let $v'$ and $v''$ be the endpoints of $S$ and $S'$ be the set of interior vertices. 

Consider $G' := G - S' - \{e\}$. (The deletion of $e$ is only required when $S' = \emptyset$.) Notice as all of the vertices of $S$ are on the boundary of $f$, we do not create two $\geq 2k$ faces. We have two cases to consider.

First suppose $G'$ is $2$-connected. Since $G$ is an edge minimal counterexample, $G'$ is $(p,q)$-mixing. Now consider any $(p,q)$-colouring $\phi$ of $G$. Observe that $\phi$ restricts to a $(p,q)$-colouring of $G'$. As $G'$ is $(p,q)$-mixing, for all cycles $D$ in $G'$, we have $W(D,\phi) = \frac{|E(D)|}{2}p$. It suffices to show that the winds of $C'$ and $C$ are $\frac{|E(C')|}{2}$ and $\frac{|E(C)|}{2}$ respectively.  As $|E(C')| < 2k$, we have $W(C',\phi) = \frac{|E(C')|}{2}p$. So it suffices to show that $W(C,\phi) = \frac{|E(C)|}{2}p$. To see this, consider the cycle $C'' := C \Delta C'$. Then since $W(C'',\phi) = \frac{|E(C'')|}{2}p$ and $W(C',\phi) = \frac{|E(C')|}{2}p$, by Lemma \ref{chordlesscycle} it follows that $W(C,\phi) = \frac{|E(C)|}{2}p$.

 Otherwise, $G'$ has a cut vertex $v$. We are going to argue that this cannot occur. Let $T_{1},T_{2},\ldots,T_{t}$ be the components of $G' - v$. We claim that $t=2$. As $v$ is a cut vertex, by definition $t \geq 2$. By construction, all vertices in $S'$ have degree $2$ in $G$. Adding the path $S$ to $G'$ joins at most two of the components in $G' -v$. Therefore if $t \geq 3$, $G$ contains a cut vertex, a contradiction. 
 
 Thus, we can decompose $G'$ into two graphs $G_{1}$ and $G_{2}$, such that $V(G_{1}) \cap V(G_{2}) = \{v\}$,  both $G_{1}$ and $G_{2}$ contain at least two vertices, and up to relabelling, $v' \in V(G_{1})$, $v'' \in V(G_{2})$. 
 
 First suppose that $v = v'$. Then as $G'$ is simply $G$ with $S'$ and $e$ deleted, there is a vertex $x \in V(G_{1}) \setminus \{ v'\}$ such that every path from $x$ to $v''$ contains $v'$. But this implies that $G$ is not $2$-connected, a contradiction. By a similar argument, we can assume that $v \neq v''$.

Now we can conclude that all paths from $v'$ to $v''$ in $G'$ must have $v$ as an internal vertex.  In particular, the path from $v''$ to $v'$ in $C-S'-\{ e \}$ must contain $v$.  We conclude $v \in V(C)$.  The boundary of $f'$ contains a path from $v'$ to $v''$ that does not use $S' \cup \{ e \}$.  In particular, this path must contain $v$.  We conclude $v \in V(C) \cap V(C')$ contrary to our assumption that $V(C) \cap V(C')$ is the path $S$ which establishes the lemma.
\end{proof}

We now prove the sufficiency of the condition in Theorem~\ref{thm:bipartiteplanarmixing}. We begin with some observations used in the proof.

\begin{obs}
\label{planefolds}
Let $G$ be a planar graph, let $x,y,z$ be consecutive vertices on a face, and suppose that $d(x,z) =2$. Then the graph obtained by folding $x$ and $z$ is planar. 
\end{obs}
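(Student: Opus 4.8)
The final statement to prove is Observation~\ref{planefolds}: folding two vertices $x,z$ that are consecutive-with-a-middle-vertex $y$ on a common face (with $d(x,z)=2$) yields a planar graph.

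\medskip

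\textbf{The plan is to} exploit the planar embedding directly rather than reasoning abstractly about the fold. The key geometric fact I would use is that $x,y,z$ appear consecutively on the boundary of some face $f$. Since $y$ is a common neighbour of $x$ and $z$ and lies between them on $f$, the three vertices $x,y,z$ together with the two edges $xy$ and $yz$ form a path drawn along the boundary of $f$, with the open face region $f$ lying on one side of this path. I would like to realize the identification of $x$ and $z$ by routing a new drawing of all edges formerly incident to $z$ so that they terminate at $x$, using the face $f$ as free space in which to perform this rerouting without introducing crossings.

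\medskip

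Concretely, the key steps in order would be: (1) Fix a planar embedding of $G$ and the face $f$ on whose boundary $x,y,z$ appear consecutively. (2) Observe that folding identifies $x$ and $z$ into $v_{xz}$, so in the new graph $G_{xz}$ the edge set incident to $v_{xz}$ is the union of the edges formerly at $x$ and those formerly at $z$ (with $xz\notin E(G)$ since $d(x,z)=2$, and any common neighbours merely giving parallel adjacencies that collapse to single edges in the simple graph). (3) Keep the drawing of $G - z$ exactly as it was, placing $v_{xz}$ at the old location of $x$. (4) Redraw each edge $zw$ (for $w\in N(z)$) as a curve from the position of $x$ into the interior of the face $f$, then out to $w$. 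The crucial point is that because $z$ was on the boundary of $f$, every edge $zw$ can be drawn with its initial segment lying inside $f$; and because $x$ is also on the boundary of $f$ (consecutive with $z$ via $y$), I can slide the endpoints of these curves along the boundary arc of $f$ from $z$'s old position to $x$'s old position, staying inside $f$, without crossing any existing edge. (5) Conclude that the resulting drawing is a planar embedding of $G_{xz}$.

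\medskip

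\textbf{The hard part will be} making step (4) rigorous: specifically, verifying that the edges formerly incident to $z$ can all be rerouted into $x$ through the face $f$ without crossing each other or the edges of $G-z$. The cleanest way to handle this is to note that $z$ lies on the boundary of $f$, so in the rotation system at $z$ the two edges $zy$ and $z(\text{next boundary edge of }f)$ are consecutive and $f$ occupies the angular sector between them; one can therefore contract $z$ along the boundary arc of $f$ toward $x$ (this arc is precisely the portion of $\partial f$ between $x$ and $z$ containing $y$, and $f$ is free space adjacent to it), dragging all incident edges through the interior of $f$. Since $f$ is a single face region (a disk in the embedding), this ambient-isotopy-style deformation introduces no crossings. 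Alternatively, and perhaps more elegantly for a short proof, I would phrase it as: place a curve $\gamma$ inside $f$ from $x$ to $z$ (possible since both lie on $\partial f$ and $f$ is a face), then the identification of $x$ and $z$ is topologically a contraction of $\gamma$ to a point, and contracting an arc lying in the interior of a single face of a planar graph preserves planarity. I would lean on this topological contraction argument as the heart of the proof and keep the combinatorial bookkeeping (parallel edges collapsing, simplicity of $G_{xz}$) as routine remarks.
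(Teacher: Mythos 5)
Your proposal is correct and matches the paper's argument: the paper proves this observation in one line by adding the edge $xz$ inside the face (possible since $x$ and $z$ both lie on its boundary) and then contracting that edge, which is precisely your curve-$\gamma$ contraction variant. Your more elaborate edge-rerouting discussion is a hands-on unpacking of the same topological fact, so no new ideas are needed beyond what you wrote.
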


One can see this by simply adding the edge $xz$ inside the face and then contracting the edge. 

The next lemma due to Hell is well known and follows from a simple breadth first search argument, see for example~\cite{Hell_1972, Graphsandhomomorphisms}.

\begin{lem}
\label{shortestpath}
Let $G$ be a connected bipartite graph, and let $P$ be a shortest path from $x$ to $y$ in $G$. Then $G$ retracts to $P$. 
\end{lem}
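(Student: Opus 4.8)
The plan is to build the retraction directly from a breadth-first search rooted at one endpoint of $P$. Write $P = p_0, p_1, \ldots, p_n$ with $p_0 = x$, $p_n = y$, and $n = d(x,y)$, and for each vertex $v$ set $\ell(v) = d(x,v)$, its distance from $x$ in $G$. First I would record the two structural facts that make the construction work. Since $G$ is bipartite it has no odd closed walk, so no edge can join two vertices equidistant from $x$: if $uv \in E(G)$ with $\ell(u) = \ell(v) = j$, concatenating a shortest $x$–$u$ path, the edge $uv$, and a shortest $v$–$x$ path yields a closed walk of odd length $2j+1$, hence an odd cycle, a contradiction. As $|\ell(u)-\ell(v)| \le 1$ always holds across an edge, we conclude every edge $uv$ satisfies $|\ell(u) - \ell(v)| = 1$, i.e. edges join consecutive BFS layers. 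Second, because $P$ is a \emph{shortest} $x$–$y$ path it is isometric in $G$, so $\ell(p_i) = i$ for every $i$.

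Next I would fold the layer index back and forth along $P$. Define $\phi \colon \mathbb{Z}_{\ge 0} \to \{0, 1, \ldots, n\}$ to be the triangle-wave map $\phi(k) = n - |(k \bmod 2n) - n|$, equivalently the map obtained by repeatedly reflecting the half-line $\{0,1,2,\dots\}$ at the endpoints $0$ and $n$. This $\phi$ restricts to the identity on $\{0, \ldots, n\}$ and satisfies $|\phi(k) - \phi(k+1)| = 1$ for all $k$ (checking the two turning points $k \equiv 0$ and $k \equiv n \pmod{2n}$ confirms the step is always $\pm 1$); thus $\phi$ sends the infinite one-way path homomorphically onto $P$ while fixing its initial segment. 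I then set $r(v) = p_{\phi(\ell(v))}$.

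Finally I would verify that $r$ is a retraction onto $P$. It fixes $P$ because $r(p_i) = p_{\phi(\ell(p_i))} = p_{\phi(i)} = p_i$ by isometry and the choice of $\phi$. It is a homomorphism because, for any edge $uv$, the layer fact gives $|\ell(u) - \ell(v)| = 1$, whence $|\phi(\ell(u)) - \phi(\ell(v))| = 1$, so $r(u)r(v) = p_{\phi(\ell(u))} p_{\phi(\ell(v))}$ is an edge of $P$. The only genuinely delicate point is the equal-distance claim behind $|\ell(u) - \ell(v)| = 1$: this is precisely where bipartiteness is essential, since an edge inside a single BFS layer would map both endpoints to the same vertex of $P$ and destroy the homomorphism. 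The isometry of $P$ and the triangle-wave bookkeeping are then routine.
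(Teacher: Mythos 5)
Your proof is correct and is exactly the ``simple breadth first search argument'' the paper invokes: the paper gives no proof of this lemma, deferring to Hell, and your BFS-layering (edges cross consecutive layers by bipartiteness, $P$ is isometric) combined with the triangle-wave reflection $\phi$ is the standard argument behind that citation. The only cosmetic point is the degenerate case $n=0$, where $\phi$ is undefined; the statement implicitly assumes $x \neq y$, so nothing is lost.
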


We state a useful and well known corollary of this fact.

\begin{cor}
\label{shortestcycleretract}
Let $G$ be a connected bipartite graph, and $C$ a shortest cycle in $G$. Then $G$ retracts to $C$.
\end{cor}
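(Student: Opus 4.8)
The plan is to prove this by combining two shortest-path retraction facts. Let $C = c_0, c_1, \dots, c_{2\ell-1}, c_0$ be a shortest cycle in $G$, of length $2\ell$ (recall $G$ is bipartite, so every cycle has even length). My goal is to exhibit a retraction $r : G \to C$, i.e. a homomorphism fixing every vertex of the prescribed copy of $C$. First I would pick a vertex $c_0 \in V(C)$ and consider the two ``antipodal'' arcs of $C$ emanating from $c_0$; the key observation is that because $C$ is a \emph{shortest} cycle, the path along $C$ from $c_0$ to the antipodal vertex $c_\ell$ is itself a shortest path in all of $G$ (otherwise a shorter connection would, together with one arc of $C$, yield a shorter cycle). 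More generally, the distance in $G$ between any two vertices of $C$ is realized along $C$.

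The main step is to use this distance-preservation to define the retraction explicitly by distances, in the spirit of Lemma~\ref{shortestpath}. For each vertex $v \in V(G)$, I would compute $d_G(c_0, v)$ via breadth-first search from $c_0$, and record this distance modulo $2\ell$; the natural candidate map sends $v$ to the vertex of $C$ at the corresponding ``signed'' position. Concretely, one tracks not just the parity but the position around the cycle, which requires choosing, for vertices far from $C$, a consistent way to project onto $C$. Since $G$ is bipartite, adjacent vertices have distances from $c_0$ differing by exactly $1$, which guarantees that a distance-based labelling sends edges to edges of $C$ (consecutive labels), so the map is a homomorphism; and because $C$ is shortest, the labels restricted to $V(C)$ agree with the identity, so the map fixes $C$ and is a retraction.

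The cleanest route, and the one I would actually write, is to avoid reconstructing the projection from scratch and instead reduce directly to Lemma~\ref{shortestpath}. I would split $C$ at two antipodal vertices $c_0$ and $c_\ell$ into two shortest paths $P_1 = c_0, c_1, \dots, c_\ell$ and $P_2 = c_0, c_{2\ell-1}, \dots, c_\ell$, each of which is a shortest $c_0$--$c_\ell$ path in $G$. Applying Lemma~\ref{shortestpath} gives retractions of $G$ onto each $P_i$; the task is then to glue these two retractions into a single retraction onto $C$ so that the images are consistent on the shared endpoints $c_0, c_\ell$.

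\textbf{Main obstacle.} I expect the gluing to be the crux: a retraction onto $P_1$ and a retraction onto $P_2$ need not agree off the paths, and naively combining them can violate the homomorphism condition across the ``seam''. The honest way to handle this is to define a single map $r(v)$ from $d_G(c_0,v)$ together with a choice of which half of $C$ to land in, and then verify \emph{directly} that adjacency is preserved and that $C$ is fixed, using shortestness of $C$ to rule out the bad case where an edge $uv$ would map to two vertices of $C$ at distance $2$ or more. Establishing that no such ``long jump'' occurs — equivalently, that the distance labelling is well-defined modulo $2\ell$ and Lipschitz with constant $1$ around the cycle — is the step that genuinely uses that $C$ is a shortest cycle and not merely shortest between two fixed endpoints.
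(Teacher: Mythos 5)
Your proposal stalls exactly where you say it does, and that gap is real: splitting $C$ at antipodal vertices into two shortest paths $P_1$, $P_2$ and applying Lemma~\ref{shortestpath} twice gives you two retractions that are defined on \emph{all} of $G$, and there is no reason their restrictions can be patched into a single homomorphism --- a vertex far from $C$ may be sent deep into $P_1$ by one retraction and deep into $P_2$ by the other, and no ``seam'' along which to switch is identified. Your fallback, a labelling by $d_G(c_0,v)$ together with ``a choice of which half of $C$ to land in,'' is precisely the unresolved point: the distance from a single vertex $c_0$ does not determine the side. For an edge $uv$ with $d_G(c_0,u)=d$ and $d_G(c_0,v)=d+1$, assigning $u\mapsto c_d$ and $v\mapsto c_{-(d+1)}$ maps the edge to a non-edge unless $2d+1\equiv\pm 1 \pmod{2\ell}$, so the choice function must be globally coherent, and no local rule based on distance alone produces it. You flag this as the step that ``genuinely uses that $C$ is a shortest cycle,'' but you never supply the argument, so the proof is incomplete as written.

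The paper sidesteps the gluing problem entirely with a one-line reduction that needs only a \emph{single} application of Lemma~\ref{shortestpath}: delete one edge $e=xy$ of $C$. Since $e$ lies on a cycle, $G-e$ is connected (and still bipartite), and $C-e$ is an $(x,y)$-path of length $|E(C)|-1$. If $G-e$ contained a shorter $(x,y)$-path $P$, then (parities of $(x,y)$-paths agree by bipartiteness) $P+e$ would be a cycle shorter than $C$, contradicting minimality of $C$. So $C-e$ is a shortest $(x,y)$-path in $G-e$, Lemma~\ref{shortestpath} gives a retraction $r\colon G-e\to C-e$, and since $r$ fixes both $x$ and $y$ while $xy\in E(C)$, the same map is a retraction of $G$ onto $C$. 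Note the contrast with your decomposition: by cutting $C$ at an \emph{edge} rather than at two antipodal vertices, the target stays a single path and the consistency of the two ``ends'' is automatic, which is exactly the coherence your construction lacks. If you want to salvage your approach, the honest repair is to abandon the two-path gluing and adopt this edge-deletion reduction.
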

\begin{proof}
Let $e$ be any edge in $C$. Then $G-e$ retracts to $C-e$ by Lemma \ref{shortestpath}, and thus $G$ retracts to $C$.
\end{proof}

Let $G$ be a connected graph and $C$ be a cycle in $G$. Given a planar embedding of $G$, following the notation of~\cite{Mixing3Col}, we let $\text{Int}(C)$ and $\text{Ext}(C)$ be the set of vertices inside and outside $C$ respectively.  Neither of these sets include the vertices of $C$.  We define \emph{the interior of $C$}, denoted $G_{\text{int}}(C)$, to be the induced subgraph $G[V(C) \cup \text{Int}(C)]$ and \emph{the exterior of $C$} to be $G_{\text{ext}}(C) = G[V(C) \cup \text{Ext}(C)]$.  The cycle $C$ is \emph{separating} if both $\text{Int}(C)$ and $\text{Ext}(C)$ are non-empty.
We use the abbreviated notation $G_{\text{int}}$ and $G_{\text{ext}}$ when the cycle is clear from context.

We now establish the sufficiency of Theorem~\ref{thm:bipartiteplanarmixing}.

\begin{lem}\label{lem:atleasttwolongfaces}
Fix $2 < \frac{p}{q} < 4$. Let $C_{2k}$ be the smallest even cycle which is not $(p,q)$-mixing. Let $G$ be a $2$-connected bipartite graph with a planar embedding containing no separating $C_{2i}$-cycle for all $i \in \{2,\ldots, k-1\}$. If $G$ has at least two $\geq 2k$-faces, then $G$ is not $(p,q)$-mixing. 
\end{lem}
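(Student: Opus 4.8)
The plan is to invoke Theorem~\ref{badweight}: it suffices to exhibit a single $(p,q)$-colouring $f$ of $G$ together with one cycle that is wrapped under $f$. Fix the planar embedding so that one of the two $\geq 2k$-faces is the outer face $F_1$, with facial cycle $C_1$, and let $F_2$ be a second $\geq 2k$-face, necessarily interior, with facial cycle $C_2$. The goal is to build $f$ so that $C_1$ is wrapped, i.e. $W(C_1,f) \neq \tfrac{|E(C_1)|}{2}p$; since the wind of $C_1$ depends only on the restriction of $f$ to $C_1$, this immediately certifies that $G$ is not $(p,q)$-mixing.

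The separating-cycle hypothesis enters in two complementary ways, and these are the engine of the argument. Since $G$ is bipartite every cycle is even, so the excluded lengths $4,6,\dots,2k-2$ are exactly the even lengths below $2k$; consequently every cycle of $G$ that separates $F_2$ from $F_1$ has length at least $2k$ and hence, as $C_{2k}$ is the minimal non-$(p,q)$-mixing even cycle, is \emph{wrappable}. Dually, every cycle of length less than $2k$ is $(p,q)$-mixing, so under \emph{any} colouring it is unwrapped; in particular every interior face other than $F_2$ is automatically unwrapped. Thus the short faces impose no constraint, while the long separating cycles nested between $C_2$ and $C_1$ have enough room to carry a nonzero net winding.

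Concretely I would realise a colouring winding once around the annular region between $C_2$ and $C_1$. Start from a wrapped $(p,q)$-colouring of the innermost boundary $C_2$, which exists because $|E(C_2)| \geq 2k$, and push it outward one face at a time toward $C_1$. Each such step replaces the current wrapped separating cycle $D$ by $D' = D \triangle F$, where $F$ is the short interior face being crossed: since $F$ is unwrapped and $D$ is wrapped, Lemma~\ref{chordlesscycle} forces $D'$ to be wrapped as well (if both $D'$ and $F$ were unwrapped, their sum $D$ would be unwrapped). Tracking weights with Observation~\ref{obs:negedge}, the defect of the innermost cycle is transported unchanged all the way out to $C_1$, so $C_1$ ends up wrapped and we are done. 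To guarantee that the outward extension can always be completed as a genuine $(p,q)$-colouring on the vertices off the separating cycles, I would first use Observation~\ref{nocliquecutsets2} to reduce to the case where the relevant faces are chordless (removing degenerate two-vertex cuts), and Observation~\ref{planefolds} together with Corollary~\ref{shortestcycleretract} to control the local structure between consecutive separating cycles.

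The main obstacle is the global consistency of this winding colouring: one must ensure the outward extension is single-valued modulo $p$, satisfies the edge constraint $q \leq |f(u)-f(v)| \leq p-q$ on \emph{every} edge, and never forces a short face to become wrapped. This is precisely where the absence of separating $C_{2i}$ with $2 \leq i \leq k-1$ is indispensable: a short separating cycle is $(p,q)$-mixing, hence unwrappable, and would act as a rigid barrier forcing zero net winding across it, blocking the transport of the defect from $F_2$ to $F_1$; with every separating cycle of length at least $2k$, no such barrier exists and the induction of the previous paragraph runs to completion. I expect the bulk of the technical work to lie in organizing this face-by-face extension (or, equivalently, in setting it up as an edge-minimal counterexample whose reductions via Observation~\ref{nocliquecutsets2} and the face-path deletions of Lemma~\ref{lem:atmostonelongface} preserve both $\geq 2k$-faces and the no-short-separating-cycle property), rather than in the winding bookkeeping itself, which Lemma~\ref{chordlesscycle} handles cleanly.
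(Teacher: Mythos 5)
Your wind bookkeeping is fine as far as it goes: it is true that every cycle of length less than $2k$ is unwrapped under \emph{every} colouring (apply Theorem~\ref{badweight} to that cycle alone), and the transport of the wrapped property across a short face via $D' = D \triangle F$ and Lemma~\ref{chordlesscycle} is valid \emph{once a $(p,q)$-colouring of all of $G$ is in hand}. The genuine gap is that your plan never produces such a colouring. Starting from a wrapped colouring of the facial cycle $C_2$ and ``pushing it outward one face at a time'' is an extension problem: you must extend a prescribed colouring of $C_2$ to the remaining vertices of $G$ while respecting the edge constraints, which is a retraction-type question, and a given wrapped colouring of $C_2$ need not extend to $G$ at all. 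Nothing in the hypotheses guarantees extendability, and this existence question is the entire content of the lemma, not organizational bookkeeping to be deferred. The tools you cite do not close it: Observation~\ref{nocliquecutsets2} points in the wrong direction for this implication (it shows that mixing of both pieces of a $2$-cut decomposition implies mixing of $G$; to certify \emph{non}-mixing of $G$ from a piece you need the converse, which in the paper requires a retraction/folding argument as in Lemma~\ref{nosep4cycle}), and Observation~\ref{planefolds} with Corollary~\ref{shortestcycleretract}, on their own, do not yield the extension.

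The paper resolves exactly this difficulty by running the argument in the opposite direction: instead of pushing a colouring outward from $C_2$, it folds $G$ down and pulls non-mixing back via Lemma~\ref{foldsto}. Concretely, it inducts on $|V(G)|$: if $G$ is a cycle it folds to $C_{2k}$; otherwise it performs one elementary fold of two vertices $a,z$ at distance two chosen on a face incident to the interior big face $f$, and either the block $B'$ of the image containing $f$ still has two $\geq 2k$-faces and no short $f$-separating cycle (induct, then apply Observation~\ref{blockreduction} and Lemma~\ref{foldsto}), or a short $f$-separating cycle $D'$ appears, whose preimage $D$ in $G$ has length exactly $2k$ and is $f$-separating; then Hell's shortest-path retraction (Lemma~\ref{shortestpath}) folds $G_{\text{int}}(D)$ onto $D$, producing a graph in which $D$ bounds a $\geq 2k$-face, and the induction continues. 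The wrapped colouring of $G$ arises only implicitly, as the composition of the folding with a wrapped colouring of the reduced graph, and---contrary to your stated goal---the wrapped cycle of $G$ obtained this way is in general \emph{not} the facial cycle $C_1$: unwinding the folds introduces detours through the identified vertices (this is visible in the proof of Lemma~\ref{foldsto}). Your proposal becomes a proof only if you supply the missing extension mechanism, and the natural mechanism is precisely this folding induction.
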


\begin{proof}
Let $f, f_{o}$ be two $ \geq 2k$-faces, and suppose that the boundaries of $f$, and $f_{o}$, are $C$, and $C_o$, respectively. Without loss of generality, we may suppose that $f_{o}$ is the outer face. 
A cycle $D$ is \emph{$f$-separating} if $f$ lies in $G_{\text{int}}(D)$ and either $D$ is a separating cycle or $D=C_o$, the boundary of the outer face.  In particular, $C_o$ is $f$-separating despite the fact that $C_o$ is not separating. As $|E(C_o)| \geq 2k$ and $G$ has no separating cycles of length less than $2k$, $G$ has no $f$-separating cycles of length less than $2k$.
  
If $G$ is a cycle, then $G$ folds to $C_{2k}$ and $G$ is not $(p,q)$-mixing by Lemma~\ref{foldsto}.  Otherwise, we show that $G$ folds to a bipartite graph $G'$ on fewer vertices, such that $G'$ contains a block with two $\geq 2k$-faces, the face $f$ (from $G$) and $f'_o$ (the outerface of $B'$), and no $f$-separating cycle of length $2i$, $i \in \{ 2, \dots, k-1 \}$.  The result follows by induction.
  
Therefore assume $G$ is not a cycle and let $y \in V(C)$ such that $\deg(y) \geq 3$.   

Let $z$ be a neighbour of $y$ on $C$, and $a$ be a neighbour of $y$ not in $C$, such that all of $a,y,$ and $z$ lie on a face (such a choice of $z$ and $a$ exists). Fold $a$ and $z$ and let $G'$ be the resulting graph. As $a,y$ and $z$ lie on a face of $G$, the graph $G'$ is planar.  
Moreover, $f$ is still a face of $G'$ as $a \not\in V(C)$. (We may think equivalently of $G'$ as being formed by deleting $a$ and joining $z$ to all the neighbours of $a$.  As $a \in \text{Ext}(C)$ this process leaves the face $f$ unchanged.)   Let $B'$ be the block of $G'$ containing $f$.  Now we consider two cases.

\smallskip\noindent  
\textbf{Case $1$: $B'$ has no $f$-separating $C_{2i}$-cycle for $i \in \{2,\ldots,k-1\}$. }

The outerface of $B'$ is an $f$-separating cycle.  Thus, it has length at least $2k$.  The outerface together with $f$ are two $\geq 2k$-faces of $B'$.  By induction, $B'$ is not $(p,q)$-mixing from which we conclude $G'$ is not $(p,q)$-mixing by Observation~\ref{blockreduction}, and by Lemma~\ref{foldsto}, $G$ is not $(p,q)$-mixing.
 
\smallskip\noindent    
\textbf{Case $2$: $B'$ has an $f$-separating cycle $D'$ of length less than $2k$.}

Let $v_{az}$ be the new vertex obtained from folding $a$ and $z$.  As $G$ has no $f$-separating cycle of length less than $2k$, we have $v_{az} \in V(D')$.  Let $v'_{az}$ and $v''_{az}$ be the two neighbours of $v_{az}$ in $D'$. Observe that without loss of generality, $a$ is adjacent to $v'_{az}$ and not to $v''_{az}$ in $G$ and $z$ is adjacent to $v''_{az}$ but not to $v'_{az}$ in $G$, as otherwise, $G$ has an $f$-separating cycle of length less than $2k$. Let $D$ be the cycle (in $G$) that gave rise to $D'$ in $B'$, i.e. $D$ is the cycle obtained by replacing the path $v'_{az}, v_{az}, v''_{az}$ in $D'$ with the path $v'_{az}, a, y, z, v''_{az}$.  Since $C$ is in $G_{\text{int}}(D)$ and $C_o$ is in $G_{\text{ext}}(D)$, if $|E(D)| < 2k$, then $G$ contains an $f$-separating cycle of length less than $2k$, a contradiction.  Since $D'$ has length less than $2k$, it follows that $|E(D)| = 2k$ and $|E(D')| = 2k-2$.

We claim $D$ is an $f$-separating cycle.  As $|E(C)| \geq 2k$, $|E(D)|=2k$, and $a \in V(D) \backslash V(C)$, there is a vertex of $C$ in $\text{Int}(D)$.  Thus, $D$ is $f$-separating if there is a vertex in $\text{Ext}(D)$ or $D = C_o$ the boundary of the outerface.  Suppose neither holds. Since $|E(C_o)| \geq 2k$ and $|E(D)| = 2k$, it must be the case that $V(C_o)= V(D)$.  Hence $D$ must have a chord not belonging to $C_o$ as $D \neq C_o$.  However, this chord must be in the interior of $D$ (as $C_o$ bounds the outerface) which implies $D$ is the sum of two shorter cycles one of which is $f$-separating, a contradiction.

Let $P$ be the path of length $2k-1$ from $y$ to $z$ in $D-zy$.  We now claim that in $G_{\text{int}}(D) - zy$, the path $P$ is a shortest $(y,z)$-path.  Suppose there is a shorter path $P'$.  Using the fact that $D$ bounds the outerface of $G_{\text{int}}(D)$ and $zy$ is an edge of $D$, the cycle $P'+zy$ is $f$-separating in $G$ and of length less than $2k$, a contradiction.
Therefore, by Lemma~\ref{shortestpath}, $G_{\text{int}} (D)- zy$ retracts to $P$ which implies $G_{\text{int}}(D)$ folds to $D$. (The vertices $y$ and $z$ are fixed under the retraction.)  Let $G''$ be the resulting graph from $G$ after folding $G_{\text{int}}(D)$ to $D$. In $G''$, $D$ is the boundary of a $\geq 2k$-face and the outerface is a $\geq 2k$-face.  (This includes the possibility that $G''=D$ is simply a cycle.) Now by induction the result follows.
\end{proof}

Together Lemma~\ref{lem:atmostonelongface} and Lemma~\ref{lem:atleasttwolongfaces} establish Theorem~\ref{thm:bipartiteplanarmixing}.  Since the property of being $(p,q)$-mixing is independent of any particular planar embedding we use, the following corollary is immediate.  

\begin{cor}\label{cor:allembeddingswork}
Fix $2 < \frac{p}{q} < 4$ and let $C_{2k}$ be the minimal non-$(p,q)$-mixing cycle.  Let $G$ be a $2$-connected, planar bipartite graph.  Then for all planar embedding of $G$ with no separating $C_{2i}$-cycle, $i \in \{ 2, \dots, k-1 \}$, either all such embeddings have at most one $\geq 2k$-face, in which case $G$ is $(p,q)$-mixing, or all such embeddings have at least two $\geq 2k$-faces, in which case $G$ is not $(p,q)$-mixing.
\end{cor}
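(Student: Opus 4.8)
The plan is to leverage the fact that being $(p,q)$-mixing is an intrinsic property of the abstract graph $G$, whereas the quantity ``number of $\geq 2k$-faces'' is a property of a particular planar embedding. Theorem~\ref{thm:bipartiteplanarmixing} links these two worlds: it asserts that for \emph{any} embedding of $G$ with no separating $C_{2i}$-cycle ($i \in \{2,\ldots,k-1\}$), the graph $G$ is $(p,q)$-mixing if and only if that embedding has at most one $\geq 2k$-face. The crucial observation is that the right-hand side of this biconditional, namely whether $G$ is $(p,q)$-mixing, makes no reference to the embedding at all; its truth value is fixed once $G$ is fixed. This is what forces all admissible embeddings to give the same verdict.

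First I would fix $G$ and consider the (possibly empty) class $\mathcal{E}$ of planar embeddings of $G$ having no separating $C_{2i}$-cycle for $i \in \{2,\ldots,k-1\}$. If $\mathcal{E}$ is empty the statement holds vacuously, so I may assume it is non-empty. I would then split the argument on the single embedding-independent bit: whether $G$ is $(p,q)$-mixing.

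In the case where $G$ is $(p,q)$-mixing, applying the forward direction of Theorem~\ref{thm:bipartiteplanarmixing} to each embedding in $\mathcal{E}$ shows that that embedding has at most one $\geq 2k$-face, which is the first alternative. In the case where $G$ is not $(p,q)$-mixing, the contrapositive of the same theorem shows that no embedding in $\mathcal{E}$ can have at most one $\geq 2k$-face; since the number of $\geq 2k$-faces is a non-negative integer, every embedding in $\mathcal{E}$ then has at least two such faces, which is the second alternative.

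The argument is essentially immediate, so I do not anticipate a genuine obstacle; the only points requiring care are purely logical. One must correctly negate ``at most one $\geq 2k$-face'' to ``at least two $\geq 2k$-faces'' (valid precisely because the face count is a non-negative integer), and one must invoke Theorem~\ref{thm:bipartiteplanarmixing} \emph{uniformly} over the entire class $\mathcal{E}$ rather than for a single fixed embedding. Making explicit that the invariance of $(p,q)$-mixing under re-embedding is what synchronizes all admissible embeddings is the conceptual crux, even though no computation is involved.
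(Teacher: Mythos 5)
Your proposal is correct and matches the paper's reasoning exactly: the paper derives the corollary as an immediate consequence of Theorem~\ref{thm:bipartiteplanarmixing} together with the observation that being $(p,q)$-mixing is independent of the chosen planar embedding, which is precisely the synchronizing principle you articulate. Your write-up merely spells out the case split and the negation of ``at most one $\geq 2k$-face'' that the paper leaves implicit.
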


Now we show we can perform reductions to remove small separating cycles. 
The first result handles all short separating cycles.

\begin{lem}
\label{easysplit}
Fix $2 < \frac{p}{q} <4$. Let $C_{2k}$ be the smallest even cycle which is not $(p,q)$-mixing. Let $G$ be a $2$-connected planar bipartite graph.  Suppose $G$ has a planar embedding where $C$ is a separating $C_{2i}$-cycle, for some $i \in \{ 2, \dots, k-1 \}$. If both $G_{1} = G_{\text{int}}(C)$ and $G_{2} = G_{\text{ext}}(C)$ are $(p,q)$-mixing, then $G$ is $(p,q)$-mixing. 
\end{lem}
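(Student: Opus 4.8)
The plan is to argue by contrapositive: assuming $G$ is not $(p,q)$-mixing, I will produce a wrapped cycle in one of $G_1$ or $G_2$, contradicting the hypothesis that both are $(p,q)$-mixing. By Theorem~\ref{badweight}, since $G$ is not $(p,q)$-mixing there is a $(p,q)$-colouring $f$ of $G$ and a cycle $D$ in $G$ that is wrapped under $f$. The key structural fact I would exploit is that $C$ is a \emph{separating} cycle, so it partitions the plane, and any cycle of $G$ interacts with $C$ in a controlled way. The first observation is that if $V(D)$ lies entirely in $V(G_1)$ or entirely in $V(G_2)$, then $D$ is a wrapped cycle in one of the two subgraphs, and we are immediately done. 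So I may assume $D$ uses vertices strictly inside $C$ (i.e. in $\text{Int}(C)$) and strictly outside $C$ (in $\text{Ext}(C)$).

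Next I would use planarity together with the separating property of $C$: a cycle $D$ meeting both $\text{Int}(C)$ and $\text{Ext}(C)$ must cross $C$, and in a planar embedding this forces $D$ to share at least two vertices with $C$ (a single shared vertex cannot let a cycle pass from the inside to the outside region of a separating cycle). I would then decompose $D$ into arcs determined by its intersections with $V(C)$, where each arc lies entirely inside or entirely outside $C$. The idea is to ``repair'' $D$ into cycles living within a single side by rerouting along $C$. Concretely, pick two consecutive intersection vertices $u, w$ of $D$ with $C$ (consecutive along $D$) bounding an arc $A$ of $D$ that lies, say, in $G_{\text{int}}(C)$; close $A$ using one of the two arcs of $C$ between $u$ and $w$ to form a cycle inside $G_1$. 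Repeated application of Lemma~\ref{chordlesscycle} is the right tool here: each time I cut the wrapped cycle along a path whose endpoints lie on it, one of the two resulting cycles inherits being wrapped. Iterating, I can push the wrapped-ness into a cycle that is confined to a single side of $C$, which is exactly a wrapped cycle in $G_1$ or $G_2$.

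To make the Lemma~\ref{chordlesscycle} application clean, I would proceed inductively on the number of times $D$ crosses between the interior and exterior of $C$. If $D$ crosses, there is a sub-arc $P$ of $D$ on one side whose two endpoints lie on $C$ and whose interior avoids $C$; this $P$ is precisely a chordal path of the cycle $C$ (or can be viewed as a path whose endpoints lie on the relevant cycle), and Lemma~\ref{chordlesscycle} guarantees that one of the two cycles formed by $P$ together with an arc of $C$ is wrapped. One of these two new cycles has strictly fewer crossings with $C$ than $D$ did, so by induction we eventually reach a wrapped cycle lying entirely on one side, contradicting the $(p,q)$-mixing of $G_1$ and $G_2$.

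The main obstacle I anticipate is the careful bookkeeping needed to guarantee that the rerouting along $C$ genuinely produces \emph{cycles of $G$} (not merely closed walks), that the pieces land in the correct subgraph $G_1$ or $G_2$, and that the ``crossing number'' decreases so the induction terminates. The planarity of the embedding and the fact that $C$ is separating are what make the arcs of $D$ alternate cleanly between the two sides; I would want to state precisely that each maximal sub-arc of $D$ between consecutive $V(C)$-visits lies wholly in $\text{Int}(C)$ or wholly in $\text{Ext}(C)$, which is where a short topological justification (a path from inside to outside of a separating cycle must meet the cycle) is required. Once that alternation is established, the repeated invocation of Lemma~\ref{chordlesscycle} is routine and the conclusion follows.
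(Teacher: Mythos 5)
Your overall plan coincides with the paper's proof, which is exactly this terse: restrict the colouring to the relevant subgraph, take a wrapped cycle $D$ via Theorem~\ref{badweight}, note that if $D$ lies wholly in $G_{1}=G_{\text{int}}(C)$ or $G_{2}=G_{\text{ext}}(C)$ you are done (this also disposes of $D=C$, which the paper rules out separately since $C\cong C_{2i}$, $i<k$, is $(p,q)$-mixing), and otherwise repeatedly apply Lemma~\ref{chordlesscycle} to confine a wrapped cycle to one side.

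There is, however, one concrete slip in your third paragraph that must be repaired, because as literally stated that step is an invalid use of Lemma~\ref{chordlesscycle}. You take a sub-arc $P$ of $D$ whose endpoints lie on $C$ and whose interior avoids $C$, and conclude that one of the two cycles formed by $P$ together with an arc of $C$ is wrapped. That is the lemma applied with $C$ as the base cycle being split, and the lemma's conclusion requires the base cycle to be wrapped; here $C$ is never wrapped (it is a $(p,q)$-mixing cycle, under any colouring its wind is forced), so both cycles of the form $P\cup(\text{arc of }C)$ may well be unwrapped while the rest of $D$ carries the wrap. The correct orientation --- which you do state, correctly, in your second paragraph --- is the reverse: the wrapped cycle $D$ is the one being cut, and the cutting path must be an arc of $C$ whose endpoints lie on $D$ and whose \emph{internal} vertices avoid $D$; to guarantee this, choose two vertices of $V(D)\cap V(C)$ that are consecutive along $C$ (not along $D$, as your bookkeeping has it --- consecutive along $D$ produces arcs of $D$ internally avoiding $C$, which is the wrong object for the cutting path). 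Lemma~\ref{chordlesscycle} then says one of the two cycles $(\text{arc of }D)\cup(\text{arc of }C)$ is wrapped, and your induction should be run on a measure adapted to this orientation, e.g.\ the number of edges of the wrapped cycle outside $E(C)$: since edges of $C$ lie in both $G_{1}$ and $G_{2}$, once the off-$C$ edges of the wrapped cycle all lie on one side, the cycle lives in $G_{1}$ or $G_{2}$ and contradicts the mixing hypothesis. With the roles of $D$ and $C$ restored in this way, your argument is the paper's argument.
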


\begin{proof}
Suppose that $G$ is not $(p,q)$-mixing. Let $\phi$ be a $G_{p,q}$-colouring where there is a wrapped cycle $D$. If $D$ contains vertices from both $G_{1}$ and $G_{2}$, then $D$ crosses the separating cycle $C$ (note, $D \neq C$ since $C$ is $(p,q)$-mixing). Then we can (repeatedly, if required) apply Lemma~\ref{chordlesscycle} to obtain a wrapped cycle which lies completely in $G_{1}$ or $G_{2}$, and thus either $G_{1}$ or $G_{2}$ is not $(p,q)$-mixing.
\end{proof}

We give an example below to show the converse of Lemma~\ref{easysplit} does not hold for all $2 < \frac{p}{q} < 4$.  Hence, we require a stronger assumption that $G$ has a separating $4$-cycle.

\begin{lem}
\label{nosep4cycle}
Let $G$ be a $2$-connected planar bipartite graph, and suppose it has a planar embedding with a separating four cycle $C$. If either $G_{\text{int}}(C)$ or $G_{\text{ext}}(C)$ is not $(p,q)$-mixing, then $G$ is not $(p,q)$-mixing.
\end{lem}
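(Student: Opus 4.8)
The plan is to reduce everything to a single colouring-extension statement and then apply Theorem~\ref{badweight}. By symmetry between the interior and exterior (both are subgraphs of $G$ in which $C$ is a bounding $4$-cycle) we may assume without loss of generality that $G_{\text{int}}(C)$ is not $(p,q)$-mixing. By Theorem~\ref{badweight} there is a $(p,q)$-colouring $\phi$ of $G_{\text{int}}(C)$ together with a cycle $D \subseteq G_{\text{int}}(C)$ that is wrapped under $\phi$. Since every edge of $G$ lies in $G_{\text{int}}(C)$ or $G_{\text{ext}}(C)$ (no edge can cross the separating cycle $C$), it suffices to extend $\phi$ to a $(p,q)$-colouring $\Phi$ of all of $G$: then $\Phi|_{G_{\text{int}}(C)} = \phi$ forces $W(D,\Phi) = W(D,\phi) \neq \tfrac{|E(D)|}{2}p$, so $D$ is wrapped under $\Phi$ and $G$ is not $(p,q)$-mixing, again by Theorem~\ref{badweight}. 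Thus the whole problem comes down to extending the restriction $\psi := \phi|_{C}$, a $(p,q)$-colouring of the $4$-cycle $C$, across $G_{\text{ext}}(C)$.

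The key step, and the place where the hypothesis that $C$ is a $4$-cycle (rather than a longer separating cycle) is essential, is a retraction argument. Because $G$ is $2$-connected and $C$ separates the plane, $G_{\text{ext}}(C)$ is a connected bipartite graph containing $C$. As $G_{\text{ext}}(C)$ is bipartite it has girth at least $4$, so the $4$-cycle $C$ is a \emph{shortest} cycle of $G_{\text{ext}}(C)$. I would then invoke Corollary~\ref{shortestcycleretract} to obtain a retraction $r \colon G_{\text{ext}}(C) \to C$ with $r|_{C} = \mathrm{id}$. Composing, $\psi \circ r \colon G_{\text{ext}}(C) \to G_{p,q}$ is a homomorphism, i.e.\ a $(p,q)$-colouring of $G_{\text{ext}}(C)$, and on $C$ it satisfies $(\psi \circ r)|_{C} = \psi \circ (r|_{C}) = \psi$. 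Hence $\psi$ extends to $G_{\text{ext}}(C)$, for \emph{every} choice of $\psi$ coming from $\phi$.

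It then remains only to glue: define $\Phi$ on $G$ to equal $\phi$ on $G_{\text{int}}(C)$ and $\psi \circ r$ on $G_{\text{ext}}(C)$. These agree on $C$ (both equal $\psi$), so $\Phi$ is well defined, and it is a valid $(p,q)$-colouring since each edge of $G$ lies in one of the two pieces on which $\Phi$ is a homomorphism. This completes the argument as indicated above. The main obstacle is really a conceptual one rather than a computational one: recognizing that a bipartite graph has girth at least $4$ so that a separating $4$-cycle is automatically a shortest cycle, which is exactly what unlocks Corollary~\ref{shortestcycleretract}. This also clarifies why the lemma strengthens the converse of Lemma~\ref{easysplit} only for $4$-cycles---for a separating $C_{2i}$ with $i \geq 3$ the cycle need not be a shortest cycle of the opposite side, the retraction may fail to exist, and correspondingly one does not expect the non-mixing of one side to propagate to $G$.
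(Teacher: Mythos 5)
Your proof is correct and hinges on exactly the same pivotal fact as the paper's: since $G_{\text{ext}}(C)$ is connected and bipartite, the separating $4$-cycle $C$ is automatically a shortest cycle, so Corollary~\ref{shortestcycleretract} gives a retraction $r\colon G_{\text{ext}}(C)\to C$. You diverge only in how that retraction is exploited. The paper converts it into a folding (Observation~\ref{retracttofold}), concludes that $G$ folds to $G_{\text{int}}(C)$, and cites the closure result Lemma~\ref{foldsto}; you instead glue $\phi$ on the interior with $\phi|_{C}\circ r$ on the exterior into a single $(p,q)$-colouring $\Phi$ of $G$, observe that the wrapped cycle $D$ survives verbatim because $\Phi$ restricts to $\phi$ on $G_{\text{int}}(C)$, and apply Theorem~\ref{badweight} directly. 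Your route is slightly leaner: since $D$ lies wholly inside the retract, where your extension agrees with $\phi$, the wind of $D$ is preserved trivially, so you need neither the retraction-to-fold conversion nor the bookkeeping in Lemma~\ref{foldsto} that handles wrapped cycles passing through identified vertices. What the paper's route buys in exchange is brevity (three lines, given the folding machinery already developed) and a statement that sits naturally inside the folding framework of Section~\ref{foldingsection}. Two points worth making explicit in your write-up: the lemma tacitly assumes $2<\frac{p}{q}<4$, since Theorem~\ref{badweight} underwrites both ends of your reduction; and the connectivity of $G_{\text{ext}}(C)$, which you assert, holds because any path in $G$ from a vertex of $\mathrm{Ext}(C)$ must meet $V(C)$ before it can enter the interior, so its initial segment lies in $G_{\text{ext}}(C)$.
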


\begin{proof}
Without loss of generality assume $G_{\text{int}}(C)$ is not $(p,q)$-mixing.
By Corollary \ref{shortestcycleretract}, $G_{\text{ext}}(C)$ retracts to $C$. As every retract of a connected graph is a folding, this implies $G$ folds to $G_{\text{int}}(C)$, which is not $(p,q)$-mixing, and hence $G$ is not $(p,q)$-mixing.
\end{proof}

We need one more observation before we can finish the polynomial time algorithm.  

\begin{obs}
The graph $C_{6}$ is not $(p,q)$-mixing for any $3 \leq \frac{p}{q} <4$.
\end{obs}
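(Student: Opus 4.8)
The plan is to appeal to Theorem~\ref{badweight}: since $C_6$ is a single cycle, it fails to be $(p,q)$-mixing precisely when it admits a $(p,q)$-colouring $f$ that wraps it, i.e. with $W(C_6,f)\neq \frac{|E(C_6)|}{2}p = 3p$. So I would simply exhibit such a colouring, aiming for total weight $2p$ (wind $2$) around the cycle.

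The key reduction I would use is that, because $\frac{p}{q}\ge 3>2$, an edge $c_ic_{i+1}$ is a valid $(p,q)$-edge exactly when its directed weight $W(c_ic_{i+1},f)=(f(c_{i+1})-f(c_i))\bmod p$ lies in $[q,p-q]$: the circular-distance condition $q\le |f(c_i)-f(c_{i+1})|\le p-q$ is equivalent to $q\le W\le p-q$ once $q\le \tfrac{p}{2}$, which holds here. Consequently, fixing $f(c_0)=0$ and assigning any edge weights $w_1,\dots,w_6\in[q,p-q]$ with $\sum_i w_i\equiv 0\pmod p$ and then setting $f(c_i)=\big(\sum_{j\le i}w_j\big)\bmod p$ produces a genuine $(p,q)$-colouring of $C_6$ whose total weight around the cycle is $\sum_i w_i$.

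It then remains to choose six integers in $[q,p-q]$ summing to $2p$. The achievable sums of six integers from the interval $[q,p-q]$ are exactly the integers of $[6q,\,6(p-q)]$, so I only need $6q\le 2p\le 6(p-q)$; the left inequality is $p\ge 3q$ and the right is $p\ge \tfrac{3}{2}q$, both guaranteed by $3\le\frac{p}{q}$. Concretely one can take $w_i=q+\delta_i$, distributing the slack $2p-6q\ge 0$ among $\delta_i\in[0,p-2q]$. The resulting colouring satisfies $W(C_6,f)=2p\neq 3p$ (reversing the orientation only swaps this between $2p$ and $4p$ by Observation~\ref{obs:negedge}, and $4p\neq 3p$ as well), so $C_6$ is wrapped and hence not $(p,q)$-mixing.

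I do not expect a genuine obstacle; the only points to verify are the weight-to-colouring correspondence above and the trivial arithmetic feasibility. I would flag, however, that the cruder earlier observations do not suffice across the whole range: for instance when $p=7$, $q=2$ neither the ``$p$ even, $j\ge p$'' nor the ``$p$ odd, $j\ge 2p$'' case applies to $C_6$, which is exactly why the explicit wind-$2$ construction (or, symmetrically, a wind-$4$ construction) is needed for all $3\le\frac{p}{q}<4$.
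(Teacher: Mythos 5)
Your proposal is correct and follows essentially the same route as the paper: both exhibit a wind-$2$ colouring of $C_6$ (total weight $2p \neq 3p$) and conclude non-mixing via the cycle-wind characterization, with the feasibility hinging on $p \geq 3q$ exactly as you identify. Indeed, the paper's explicit colouring $0, q, 2q, 0, q, 2q$ is just the special case of your weight-sequence construction with weights $q, q, p-2q, q, q, p-2q$.
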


\begin{proof}
Let $v_{0},v_{1},v_{2},v_{3},v_{4},v_{5}$ be the vertices of $C_{6}$, where $v_{i}v_{i+1} \in E(C_{6})$ for $i \in \{0,\ldots,5\}$, indices taken modulo $6$. Let $f$ be the $(p,q)$-colouring where $f(v_{0}) = f(v_{3}) = 0$, $f(v_{1}) = f(v_{4}) = q$, $f(v_{2}) = f(v_{5}) = 2q$. This is a proper $(p,q)$-colouring as $\frac{p}{q} \geq 3$. Observe that $\frac{|E(C)|}{2}p = 3p$. Orienting $C$ from $v_{i}$ to $v_{i+1}$ for $i \in \{0,\ldots,5\}$, we have $W(C,f) = 2p < 3p$, and hence $C_{6}$ is not $(p,q)$-mixing. 
\end{proof}

Now it follows from the above sequence of claims that there is a polynomial time algorithm for determining if a bipartite planar graph is $(p,q)$-mixing when $3 \leq \frac{p}{q} <4$, proving Theorem~\ref{introplanarity}.

To see this, one first finds a planar embedding. Secondly, find all two connected blocks of the graph. Thirdly, enumerate all separating $4$-cycles and reduce to the case where there are no separating $4$-cycles. Lastly, check if the resulting graphs are $(p,q)$-mixing by testing the size of the faces in the embedding. As $p$ and $q$ are fixed, each of these steps can be done in polynomial time (with the running time depending on $p$ and $q$). To obtain a precise running time, one can do analysis similar to that in~\cite{Cerecedathesis}.

We finish by showing the need to restrict our reduction to separating four cycles in the last result is necessary. For $(5,2)$-mixing the shortest even cycle that is not mixing is $C_{10}$.  We now give an example of a $2$-connected planar bipartite graph $G$ with a planar embedding containing a separating $8$-cycle $C$ such that $G_{\text{int}}(C)$ is not $(5,2)$-mixing but the entire graph $G$ is $(5,2)$-mixing. We will use the following observation:

\begin{obs}
\label{foldinginneighbourhoods}
Fix positive integers $p,q$ such that $2 < \frac{p}{q} < 4$. Let $G$ be a connected graph, and $u$ be a vertex whose neighbourhood is contained in the neighbourhood of a vertex $v$. Let $G_{uv}$ be the graph obtained from folding $u$ and $v$. Then $G$ is $(p,q)$-mixing if and only if $G_{uv}$ is $(p,q)$-mixing. 
\end{obs}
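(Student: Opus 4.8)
The plan is to first exploit the hypothesis $N(u)\subseteq N(v)$, which makes the fold especially transparent. Since the new vertex $v_{uv}$ is adjacent to $N(u)\cup N(v)=N(v)$, the graph $G_{uv}$ is isomorphic to $G-u$ with $v_{uv}$ playing the role of $v$. (First one checks $u$ and $v$ are at distance exactly $2$, as required for an elementary fold: they are non-adjacent, since otherwise $v\in N(u)\subseteq N(v)$ would give $v$ a loop, and they share a neighbour because $G$ is connected.) The forward implication is then immediate from Lemma~\ref{foldsto}: as $G$ folds to $G_{uv}$, if $G_{uv}$ is not $(p,q)$-mixing then neither is $G$; contrapositively, $G$ being $(p,q)$-mixing implies $G_{uv}$ is $(p,q)$-mixing.

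For the reverse implication I would argue the contrapositive, producing a wrapped cycle in $G_{uv}=G-u$ and invoking Theorem~\ref{badweight}. Assume $G$ is not $(p,q)$-mixing, so there is a $(p,q)$-colouring $f$ of $G$ and a cycle $C$ wrapped under $f$. The key preliminary step is to recolour $u$ to match $v$: define $f'$ by $f'(u)=f(v)$ and $f'=f$ elsewhere. This is a valid $(p,q)$-colouring because every neighbour $w$ of $u$ lies in $N(v)$, so $f(v)$ already respects the circular constraint across $uw$. Since $f$ and $f'$ differ at the single vertex $u$, they are adjacent in $\Col(G,G_{p,q})$, so by the wind-invariance in Theorem~\ref{Path lemma} every cycle has the same weight under $f$ and $f'$; in particular $C$ remains wrapped under $f'$, and now $f'(u)=f'(v)$.

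Now let $\phi:G\to G_{uv}$ be the fold and $g=f'|_{G-u}$ the induced colouring of $G-u$, so that $g(v)=f'(v)=f'(u)$. Because $f'(u)=f'(v)$, every edge weight is preserved when $u$ is replaced by its image, so the image closed walk $\phi(C)$ has total weight $W(C,f')$ while its total ``balanced value'' $\tfrac{|E(C)|}{2}p$ is unchanged. If $u\notin V(C)$, or if $u\in V(C)$ but $v\notin V(C)$, then $\phi(C)$ is already a genuine cycle in $G-u$, and it is wrapped. The remaining case is $u,v\in V(C)$: splitting $C$ at $u$ and $v$ into its two internally disjoint $u$--$v$ arcs $P_1,P_2$, the fold sends these to two edge-disjoint closed walks $C_1,C_2$ through $v$ with $W(C,f')=W(C_1,g)+W(C_2,g)$ and $|E(C)|=|E(C_1)|+|E(C_2)|$; since $C$ is wrapped, at least one $C_i$ is unbalanced.

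The main obstacle is precisely this last case, which requires care with degenerate arcs: an arc $P_i$ of length $2$ folds to a ``digon'' $v,c,v$, which by Observation~\ref{obs:negedge} always has weight exactly $p=\tfrac{2}{2}p$ and so can never be the unbalanced summand. As $u\not\sim v$ forces each arc to have length at least $2$, and two length-$2$ arcs would make $C$ a balanced $4$-cycle (contradicting wrappedness), at least one arc has length at least $3$ and supplies a genuine wrapped cycle $C_i$ in $G-u$. In every case we obtain a wrapped cycle in $G_{uv}$, so by Theorem~\ref{badweight} the graph $G_{uv}$ is not $(p,q)$-mixing, completing the proof.
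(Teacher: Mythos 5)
Your proposal is correct and follows essentially the same route as the paper: the forward direction via Lemma~\ref{foldsto}, and the reverse direction by recolouring $u$ to $f(v)$ (valid since $N(u)\subseteq N(v)$, and weight-preserving for $2<\frac{p}{q}<4$) and then tracking the wrapped cycle through the fold, where its image is a cycle or splits into two closed walks at $v_{uv}$. In fact your treatment of the degenerate length-$2$ arc, whose image is a doubled edge of weight exactly $p$ and hence never the unbalanced summand, is spelled out more carefully than in the paper, which compresses this into ``a similar argument as given in Lemma~\ref{foldsto}.''
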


\begin{proof}
If $G$ is $(p,q)$-mixing, then Lemma \ref{foldsto} implies that $G_{uv}$ is $(p,q)$-mixing.

Now suppose that $G$ is not $(p,q)$-mixing. Then there is a $(p,q)$-colouring $f$ of $G$ and a cycle $C$ in $G$ which is wrapped with respect to $f$. As the neighbourhood of $u$ is contained in the neighbourhood of $v$, we may recolour $u$ so that $f(u) = f(v)$.  Since $2 < \frac{p}{q} < 4$, such a recolouring does not change the weight of any cycle, and so $C$ is still wrapped~\cite{circularReconfig}.
We define the $(p,q)$-colouring $f'$ of $G_{uv}$ by $f'(x) = f(x)$ for all $x \in V(G) \setminus \{u,v\}$ and if $x_{uv}$ is the vertex obtained from folding $u$ and $v$, we let $f'(x_{uv}) = f(v)$. That is,
$f = f' \circ g$ where $g$ is the fold that takes $u$ to $v$.  Thus, $W(C,f) = W(C,f' \circ g)$.  Since $g(C)$ is either a cycle in $G_{uv}$ or two cycles sharing the vertex $x_{uv}$,  by a similar argument as given in Lemma \ref{foldsto}, there is a wrapped cycle in $G_{uv}$, showing that $G_{uv}$ is not $(p,q)$-mixing. 
\end{proof}

\begin{figure}
\begin{tikzpicture}

\foreach \i in {0,2,4,6}
  \node[whitevertex] (v\i) at (90-\i*45:1.5cm) {};
\foreach \i in {1,3,5,7}
  \node[blackvertex] (v\i) at (90-\i*45:1.5cm) {};  
 
\node[label={0:$u$}] at (v0) {};
\node[label={0:$v$}] at (v4) {};
 
\draw[thick,black] (v0)--(v1)--(v2)--(v3)--(v4)--(v5)--(v6)--(v7)--(v0);  

\node[grayvertex,label={180:$w$}] (w) at (-3,0) {};
\draw[thick,black] (w)--(v6){};
\draw[thick,black] (w) to[out=45,in=135] (v0);
\draw[thick,black] (w) to[out=315,in=225] (v4);
\draw[thick,black] (w) to[out=270,in=180] (-0.5,-2.5) to[out=0,in=315] (v2);

\foreach \i in {1,3,5}
  \node[grayvertex] (u\i) at (0,1.5-\i*0.5) {};
\foreach \i in {2,4}
  \node[grayvertex] (u\i) at (0,1.5-\i*0.5) {};
\draw[thick,black] (v0)--(u1)--(u2)--(u3)--(u4)--(u5)--(v4);  
  
\end{tikzpicture}
\caption{An example of a planar graph $G$ with a separating eight cycle $C$ consisting of the
white vertices $A$ and the black vertices $B$.  The subgraph $G_{\text{int}}(C)$ is not 
$(5,2)$-mixing, but $G$ is $(5,2)$-mixing.}\label{fig:example}
\end{figure}
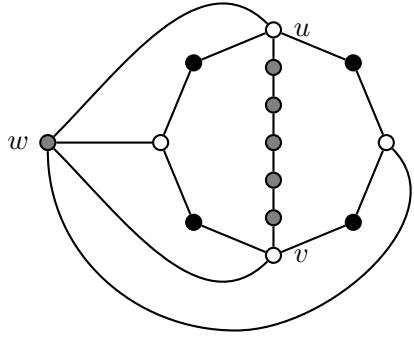

We construct our example $G$ as follows. Let $C$ be an $8$-cycle with bipartition $(A,B)$. Add a new vertex $w$ adjacent to all vertices in $A$. Let $u,v$ be two vertices in $A$ where $d(u,v) = 4$ in $C$. Add a path $P$ from $u$ to $v$ of new vertices such that $P$ has length $6$. Observe that $G$ is planar. Consider the embedding where $w$ lies on the exterior of $C$ and $P$ lies on the interior of $C$. Then $G_{\text{int}}(C)$ is not $(5,2)$-mixing, since it has no separating $i$-cycle for $i \in \{4,6,8\}$ and two faces of length $10$. Notice that each vertex of $B$ has its neighbourhood in the neighbourhood of $w$.  Hence, we can fold each vertex of $B$ to $w$.  Next we can fold each vertex of $A \setminus \{ u, v \}$ to $u$.  By repeatedly applying Observation~\ref{foldinginneighbourhoods}, we see $G$ is $(5,2)$-mixing if and only if $C_{8}$ is $(5,2)$-mixing, which is true.  See Figure~\ref{fig:example}.

For $p/q < 3$, $C_6$ is $(p,q)$-mixing, so the shortest non-mixing cycle has length at least $8$.  Using this, the example easily generalizes to any maximal mixing cycle $C_{2k}$ by joining $u$ and $v$ with a path of length $k+2$ and adding $w$ dominating one part (of the $C_{2k}$ bipartition) so that the resulting graph folds to a cycle that is $(p,q)$-mixing.

\section{Conclusion}

The study of \Hrec{K_3} and \Hmix{K_3} in~\cite{3colReconfig,Bonsma,Mixing3Col,Cerecedathesis} identifies several key ideas which extend to the study of \Hrec{G_{p,q}} and \Hmix{G_{p,q}} for $2 < \frac{p}{q} < 4$.  While some of the results depend on $\frac{p}{q} < 4$, others depend on the homomorphism target being $2$-regular or containing a triangle.  These notions all coincide for $K_3$, but differ for $2 < \frac{p}{q} < 4$.  This paper represents an initial study into \Hmix{H} in this finer landscape.  It will be interesting to develop further methods to provide a complete classification of the complexity of \Hmix{H} for $2 < \frac{p}{q} < 4$.

\bibliography{Reconfigv2}

\begin{thebibliography}{10}

\bibitem{marthe}
M.~Bonamy, P.~Ouvrard, M.~Rabie, J.~Suomela, and J.~Uitto.
\newblock Distributed recoloring.
\newblock {\em CoRR}, abs/1802.06742, 2018.

\bibitem{BondyandHell}
J.~A. Bondy and P.~Hell.
\newblock A note on the star chromatic number.
\newblock {\em J. Graph Theory}, 14(4):479--482, jul 1990.

\bibitem{BondyMurty}
J.~A. Bondy and U.~S.~R. Murty.
\newblock {\em Graph theory}, volume 244 of {\em Graduate Texts in
  Mathematics}.
\newblock Springer, New York, 2008.

\bibitem{Bonsma}
P.~Bonsma and L.~Cereceda.
\newblock Finding paths between graph colourings: {PSPACE}-completeness and
  superpolynomial distances.
\newblock {\em Theoret. Comput. Sci.}, 410(50):5215--5226, 2009.

\bibitem{Frozen}
R.~C. Brewster, J.B. Lee, B.~Moore, J.~A. Noel, and M.~Siggers.
\newblock Graph homomorphism reconfiguration and frozen {H}-colorings.
\newblock {\em Journal of Graph Theory}, 94(3):398--420, 2020.

\bibitem{reflexivedigraphs}
R.~C. Brewster, J.B. Lee, and M.~Siggers.
\newblock Recolouring reflexive digraphs.
\newblock {\em Discrete Mathematics}, 341(6):1708 -- 1721, 2018.

\bibitem{circularReconfig}
R.~C. Brewster, S.~McGuinness, B.~Moore, and J.~A. Noel.
\newblock A dichotomy theorem for circular colouring reconfiguration.
\newblock {\em Theoret. Comput. Sci.}, 639:1--13, 2016.

\bibitem{BrewsterNoel}
R.~C. Brewster and J.~A. Noel.
\newblock Mixing homomorphisms, recolorings, and extending circular
  precolorings.
\newblock {\em J. Graph Theory}, 80(3):173--198, 2015.

\bibitem{Cerecedathesis}
L.~Cereceda.
\newblock {\em Mixing graph colourings}.
\newblock PhD thesis, London School of Economics and Political Science, 2007.

\bibitem{Mixing3Col}
L.~Cereceda, J.~van~den Heuvel, and M.~Johnson.
\newblock Mixing 3-colourings in bipartite graphs.
\newblock {\em European J. Combin.}, 30(7):1593--1606, 2009.

\bibitem{3colReconfig}
L.~Cereceda, J.~van~den Heuvel, and M.~Johnson.
\newblock Finding paths between 3-colorings.
\newblock {\em J. Graph Theory}, 67(1):69--82, 2011.

\bibitem{Chromaticnumberandfolding}
C.~R. Cook and A.~B. Evans.
\newblock Graph folding.
\newblock In {\em Proceedings of the {T}enth {S}outheastern {C}onference on
  {C}ombinatorics, {G}raph {T}heory and {C}omputing ({F}lorida {A}tlantic
  {U}niv., {B}oca {R}aton, {F}la., 1979)}, Congress. Numer., XXIII--XXIV, pages
  305--314. Utilitas Math., Winnipeg, Man., 1979.

\bibitem{rettocycles}
P.~Dukes, H.~Emerson, and G.~MacGillivray.
\newblock Undecidable generalized colouring problems.
\newblock {\em J. Combin. Math. Combin. Comput.}, 26:97--112, 1998.

\bibitem{dvok2020thomassentype}
Zden\v{e}k Dvo\v{r}\'{a}k and Carl Feghali.
\newblock A {T}homassen-type method for planar graph recoloring.
\newblock {\em European Journal of Combinatorics}, 95:103319, 2021.

\bibitem{godsil01}
C.~Godsil and G.~Royle.
\newblock {\em Algebraic Graph Theory}.
\newblock Springer, 2001.

\bibitem{Hahn1997}
G.~Hahn and C.~Tardif.
\newblock {\em Graph homomorphisms: structure and symmetry}, pages 107--166.
\newblock Springer Netherlands, Dordrecht, 1997.

\bibitem{itocolouring}
T.~Hatanaka, T.~Ito, and X.~Zhou.
\newblock The coloring reconfiguration problem on specific graph classes.
\newblock In {\em Combinatorial Optimization and Applications}, pages 152--162,
  Cham, 2017. Springer International Publishing.

\bibitem{Hell_1972}
Pavol Hell.
\newblock {\em R\'{e}tractions de Graphes}.
\newblock PhD thesis, Universit\'{e} de Montréal, Montreal, Canada, 1972.

\bibitem{Graphsandhomomorphisms}
Pavol Hell and Jaroslav Ne\v{s}et\v{r}il.
\newblock {\em Graphs and homomorphisms}.
\newblock Oxford University Press, 2004.

\bibitem{Ito}
T.~Ito, E.~D. Demaine, N.~J.~A. Harvey, C.~H. Papadimitriou, M.~Sideri,
  R.~Uehara, and Y.~Uno.
\newblock On the complexity of reconfiguration problems.
\newblock {\em Theoret. Comput. Sci.}, 412(12-14):1054--1065, 2011.

\bibitem{lee2018reconfiguring}
J.~B. Lee, J.~A. Noel, and M.~Siggers.
\newblock Reconfiguring graph homomorphisms on the sphere.
\newblock {\em Eur. J. Comb.}, 86:103086, 2020.

\bibitem{naomisurvey}
N.~Nishimura.
\newblock Introduction to reconfiguration.
\newblock {\em Algorithms}, 11(4), 2018.

\bibitem{changeSurvey}
J.~van~den Heuvel.
\newblock The complexity of change.
\newblock In {\em Surveys in combinatorics 2013}, volume 409 of {\em London
  Math. Soc. Lecture Note Ser.}, pages 127--160. Cambridge Univ. Press,
  Cambridge, 2013.

\bibitem{vikas}
N.~Vikas.
\newblock Computational complexity of compaction to irreflexive cycles.
\newblock {\em J. Comput. Syst. Sci}, 68(3):473-- 496, 2004.

\bibitem{Wrochna}
M.~Wrochna.
\newblock Homomorphism reconfiguration via homotopy.
\newblock In {\em 32nd {I}nternational {S}ymposium on {T}heoretical {A}spects
  of {C}omputer {S}cience}, volume~30 of {\em LIPIcs. Leibniz Int. Proc.
  Inform.}, pages 730--742. Schloss Dagstuhl. Leibniz-Zent. Inform., Wadern,
  2015.

\bibitem{WROCHNA20181}
M.~Wrochna.
\newblock Reconfiguration in bounded bandwidth and tree-depth.
\newblock {\em Journal of Computer and System Sciences}, 93:1 -- 10, 2018.

\bibitem{zhu_1999}
X.~Zhu.
\newblock Circular colouring and graph homomorphism.
\newblock {\em Bulletin of the Australian Mathematical Society}, 59(1):83--97,
  1999.

\bibitem{Zhusurvey}
X.~Zhu.
\newblock Circular chromatic number: a survey.
\newblock {\em Discrete Math.}, 229(1-3):371--410, 2001.
\newblock Combinatorics, graph theory, algorithms and applications.

\end{thebibliography}
  \bibliographystyle{plain}

\end{document}